\documentclass[12pt]{amsart}
\usepackage[a4paper,left=2cm,right=1.2cm,top=3cm,bottom=4cm]{geometry}
\usepackage{amsmath,amstext, amsthm,
amsbsy,amssymb,marvosym,fancyhdr,graphicx,amscd,amsfonts,latexsym,delarray,stackrel,lineno,color,cite,appendix,xcolor,url,hyperref}
\usepackage{wasysym}
\usepackage{subcaption}

\usepackage{srcltx}
\usepackage{mathrsfs}
\usepackage{float}

\usepackage[all]{xy}
\usepackage{t1enc}
\usepackage{mathrsfs}
\usepackage{pifont}

\usepackage{mathpple}

\usepackage[T1]{fontenc}

\definecolor{Green}{rgb}{0,1,0}
\definecolor{Blue}{RGB}{0,0,191}
\definecolor{mathmodecolor}{RGB}{0,102,0}
\definecolor{keywordcolor}{RGB}{0,51,151}
\definecolor{sourcebackgroundcolor}{RGB}{255,247,223}
\definecolor{unixagred}{RGB}{255,0,0}
\definecolor{lightgray}{RGB}{191,191,191}
\definecolor{green}{RGB}{1,191,191}

\newcommand*\patchAmsMathEnvironmentForLineno[1]{%
  \expandafter\let\csname old#1\expandafter\endcsname\csname #1\endcsname
  \expandafter\let\csname oldend#1\expandafter\endcsname\csname end#1\endcsname
  \renewenvironment{#1}%
     {\linenomath\csname old#1\endcsname}%
     {\csname oldend#1\endcsname\endlinenomath}}%
\newcommand*\patchBothAmsMathEnvironmentsForLineno[1]{%
  \patchAmsMathEnvironmentForLineno{#1}%
  \patchAmsMathEnvironmentForLineno{#1*}}%
\AtBeginDocument{%
\patchBothAmsMathEnvironmentsForLineno{equation}%
\patchBothAmsMathEnvironmentsForLineno{align}%
\patchBothAmsMathEnvironmentsForLineno{flalign}%
\patchBothAmsMathEnvironmentsForLineno{alignat}%
\patchBothAmsMathEnvironmentsForLineno{gather}%
\patchBothAmsMathEnvironmentsForLineno{multline}%
}

\newtheorem{thm}{Theorem}[section]

\newtheorem{prop}[thm]{Proposition}
\newtheorem{corl}[thm]{Corollary}
\newtheorem{lma}[thm]{Lemma}

\newtheorem{defn}[thm]{Definition}
\newtheorem{rem}[thm]{Remark}
\newtheorem{example}[thm]{Example}

\def\coker{{\rm coker}}

\def\env{{\rm env}}

\def\Gr{{\rm Gr}}

\def\id{{\rm id}}

\DeclareMathOperator{\rk}{rk}

\def\S{\mathbb {S}}

\newcommand{\cT}[1]{C(S^1)^{(#1)}}
\newcommand{\FR}[1]{C(S^1)_{(#1)}}
\newcommand{\cC}[1]{\mathcal{C}^{(#1)}}

\def\tilde{\widetilde}

\def\tr{{\rm tr}}
\def\V{\mathcal{V}}

\def\tr{{\rm tr}}

\def\cE{{\mathcal E}}

\def\cK{{\mathcal K}}

\def\cN{{\mathcal N}}
\def\cR{{\mathcal R}}

\def\M{{\mathcal M}}

\def\D{\mathbb{D}}
\def\N{\mathbb{N}}
\def\R{\mathbb{R}}
\def\P{\mathbb{P}}
\def\I{\mathbb{I}}
\def\Z{\mathbb{Z}}
\def\C{\mathbb{C}}

\def\dar[#1]{\ar@<2pt>[#1]\ar@<-2pt>[#1]}

\newcommand{\nil}[1]{}

\makeatletter
\DeclareMathOperator{\exterior}{\@ifnextchar^\@exterior{\@exterior^{}}}
\def\@exterior^#1{\mathop{\bigwedge\nolimits^{\!#1}}}
\makeatother

\begin{document}

      \title[K-theoretic invariants of the Toeplitz operator system, its dual; graph operator systems]{K-theoretic invariants of the Toeplitz operator system and its dual, and of all graph operator systems}
\author{Walter D. van Suijlekom}

\email{waltervs@math.ru.nl}
\address{Institute for Mathematics, Astrophysics and Particle Physics, Radboud University Nijmegen, Heyendaalseweg 135, 6525 AJ Nijmegen, The Netherlands.}

\date{30 September 2026}

\begin{abstract}
  We compute $K$-theoretic invariants for several finite-dimensional operator systems. These include the Toeplitz operator system and its dual (the Fej\'er--Riesz operator system), as well as operator systems associated to tolerance relations, aka graph operator systems. A variety of techniques is applied, adapted to each of these cases, ranging from a generalization of Carath\'eodory's factorization result for block Toeplitz matrices, to Wiener--Hopf factorization and Schur complements. In most instances we find that the invariants coincide with their analogues for the $C^*$-envelope, except for the $K_1$-invariants of the Fej\'er--Riesz operator system.
\end{abstract}

\maketitle

\setcounter{tocdepth}{2}
\tableofcontents

\section{Introduction}
In our search for an extension of K-theory from $C^*$-algebras to operator systems  \cite{Sui24,Sui25b} we found that one can indeed formulate such a theory which is based on hermitian forms. This was inspired by the Witt group for rings \cite{Knu91,Bal05} which for unital $C^*$-algebras indeed coincides with the $K_0$-group \cite{Ros95}. Moreover, almost-projections (in the sense of \cite{OY15}) and projections in operator systems \cite{AR22} were found to be compatible with the proposed structures.

We showed in \cite{Sui24} that the resulting theory generalizes $C^*$-algebraic $K$-theory, that it is compatible with ucp maps that lift to the $C^*$-envelope, that it extends to non-unital operator systems, and that it is invariant under Morita equivalence for operator systems (in the sense of \cite{EKT21}).  

However, besides this development of the general theory, the question also arose whether it would be possible to compute these $K$-theoretic invariants for some examples. Indeed, one of the powerful aspects of $C^*$-algebraic $K$-theory is that in many cases of interest, these invariants can be explicitly computed. Recently, some explicit examples of operator systems were encountered in the context of noncommutative geometry: the Toeplitz operator system $\cT{n}$ arising from spectral truncations of the circle \cite{CS20}, its dual given by the Fej\'er--Riesz operator system $\FR{n}$ \cite{CS20,Far21}, and tolerance relations on metric spaces \cite{CS21,GS22}. Concretely, the first consists of $n \times n $ Toeplitz matrices, the second of Fourier truncations on the circle, while the third can be realized by graph operator systems (in the finite-dimensional case). As we will see below, the computation of $K$-theoretic invariants in all of these cases can indeed be done, and comes down to analyzing the number of path-components in the space of (hermitian) invertible matrices with values in the operator system, for which in each of these cases a wealth of literature is available; we will indicate these below in each section. Note that a brief discussion on the $K$-invariants for four three-dimensional operator systems is given in \cite{Sui26a}.

In Section \ref{sect:Toep} we analyze path-connected components in the space of hermitian forms in the Toeplitz operator system $\cT{n}$. We find that the embedding in its $C^*$-envelope $M_n(\C)$ actually induces an isomorphism between the $K_0$-invariants $\V_0(\cT{n},m)$ and $\V_0(M_n(\C),m)$. In particular, $K_0(\cT{n}) \cong K_0(M_n(\C)) \cong \Z$.

In Section \ref{sect:FR} we continue with the dual of the Toeplitz operator system: the so-called Fej\'er--Riesz operator system $\FR{n}$. Concretely, it is given by functions $a(z) = \sum a_j z^j$ with fixed support in Fourier, {\em i.e.} for which $a_j =0$ for all $|j| \geq n$. Upon analyzing the path-components in the space of hermitian forms, we again find that the embedding of it in its $C^*$-envelope $C(S^1)$ induces isomorphisms between $\V_0(\FR{n},m)$ and $\V_0(C(S^1),m)$. Consequently, for the $K_0$-groups we have $K_0(\FR{n}) \cong K_0 (C(S^1)) \cong \Z$.

We also compute the invariant $\V_1(\FR{n},m)$. For finite-dimensional $C^*$-envelopes, this invariant is known to be trivial (see Proposition \ref{prop:K1} below), but in the case of the Fej\'er--Riesz operator system we find that it gives non-trivial information. Indeed, we find that the determinant winding number gives an isomorphism $\V_1(\FR{n},m) \cong \{ -m(n-1), -m(n-1)+1, \ldots, m(n-1) \}$, in contrast to $\V_1(C(S^1),m) \cong \Z$.

In Section \ref{sect:tol} we consider the operator system $E(\cR)$ associated to a tolerance (= reflexive and symmetric) relation $\cR$ on a finite set $X= \{ 1,\ldots, n\}$. Explicitly, this operator system consists of $n \times n$ matrices whose entries at $(i,j)$ are zero whenever $(i,j) \notin \cR$. They were also called graph operator systems \cite{OP15}: the elements of $\cR$ can be considered as edges on the elements (vertices) in $X$. The $C^*$-envelope is given by a direct sum of matrix algebras $A(\cR) := M_{n_1}(\C) \oplus \cdots \oplus M_{n_c}(\C)$, where the summands correspond to the connected components (on $n_1, \ldots, n_c$ vertices) of the underlying graph. Once again, we find that the embedding in its $C^*$-envelope induces isomorphisms between $\V_0(E(\cR),m)$ and $\V_0( A(\cR) ,m)$. Consequently, for the $K_0$-groups we have $K_0(E(\cR)) \cong K_0 (A(\cR) ) \cong \Z^c$.

\subsection*{Acknowledgments}
We thank Kristin Courtney, Maximilian Illmer, Sam Kim, Yuezhao Li, Tim Netzer, Lukas Obermeyer and Hermann Schulz--Baldes for fruitful discussions concerning the derivation of the $K$-invariants for the operator systems presented here.

For this manuscript, we have made use of LLMs. Specifically, there have been dialogues of the author with ChatGPT-5.6 Sol and 6 Astra (OpenAI), Gemini 3.1 Pro (Google), Claude Opus 5 (Anthropic), in the search for a derivation of the number of path-components in the space of (hermitian) invertibles in the operator systems under consideration. For the Toeplitz operator system, the general scheme was set up by the author, taking Carath\'eodory factorization of invertible hermitian block Toeplitz matrices as a starting point. The subsequent steps in Section \ref{sect:Toep} were then a result of a continuous going back-and-forth to the LLM-prompts to strengthen, clarify or rephrase the argument. 

The skeletons for the derivations of the $K_0$-invariants of the Fej\'er-Riesz operator system and the graph operator systems were produced by Claude Opus 5 and for $K_1$ of the Fej\'er--Riesz operator system by ChatGPT-6 Astra. This was then further developed, rephrased and improved by the author, including the addition of proper citations, to arrive at the texts in Sections \ref{sect:FR} and \ref{sect:tol}. Open weight LLMs (QWen3.8) were used for proofreading. That being said, the responsibility for the content below lies entirely with the author.

\section{General theory}
We recall from \cite{Sui24,Sui25b} the notion of non-singular elements and hermitian forms in an operator system. For our purposes it is sufficient to consider {\em concrete} operator systems, which are defined to be $*$-closed and norm-closed vector spaces of bounded operators on a Hilbert space and containing the unit 1. In fact, we realize an operator system $E$ inside its $C^*$-envelope via the map $\imath_E : E \to C^*_\env(E)$. For more details, we refer {\em e.g.} to \cite{ER00,Pau02,Pis03}

\begin{defn}
\label{defn:non-sing}
Let $(E,e)$ be a unital operator system. An element $x \in M_m(E)$is called {\em non-singular} if $\imath_E^{(m)}(x)$ is invertible as a matrix with entries in the $C^*$-envelope $C^*_\env(E)$. 

A {\em hermitian form} is a non-singular element $x \in M_m(E)$ which is self-adjoint. 
\end{defn}
We will write $H(E,m)$ for all hermitian forms in $M_m(E)$ and $G(E,m)$ for all non-singular elements in $M_m(E)$. Also, set $\V_0(E,m) := \pi_0(H(E,m))$ and $\V_1(E,m) := \pi_0(G(E,m))$, {\em i.e.}, the sets of path-components of hermitian forms and non-singular elements in $M_m(E)$, respectively.  
We write $x \sim_m x'$ if $x,x'$ belong to the same path component in either $H(E,m)$ or $G(E,m)$.
All the sets $\V_p(E,m)$ ($p=0,1$) are invariants of operator systems in the following sense. 
\begin{prop}
  If $E$ and $F$ are completely order isomorphic then $\V_p(E,m) \cong \V_p(F,m)$ for all $m \geq 1$.
\end{prop}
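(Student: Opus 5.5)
Let $\phi: E \to F$ be a unital complete order isomorphism, so $\phi$ and $\phi^{-1}$ are unital completely positive. The plan is to show that the amplification $\phi^{(m)} : M_m(E) \to M_m(F)$ restricts to homeomorphisms $H(E,m) \to H(F,m)$ and $G(E,m) \to G(F,m)$. A homeomorphism induces a bijection on path components, which gives $\V_p(E,m) \cong \V_p(F,m)$ for $p=0,1$.

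\textbf{Step 1: extend $\phi$ to the envelopes.} By the universal property of the $C^*$-envelope (Hamana), a unital complete order isomorphism $\phi : E \to F$ extends to a $*$-isomorphism $\tilde\phi : C^*_\env(E) \to C^*_\env(F)$ with $\tilde\phi \circ \imath_E = \imath_F \circ \phi$.

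To see this, note that $\imath_F \circ \phi : E \to C^*_\env(F)$ is a unital complete isometry whose image generates $C^*_\env(F)$. So $(C^*_\env(F), \imath_F\circ\phi)$ is a $C^*$-cover of $E$. The universal property then gives a surjective $*$-homomorphism $C^*_\env(F) \to C^*_\env(E)$ intertwining the embeddings. By symmetry there is also one in the other direction. The two compositions fix the generating copies of $E$ and $F$, so they are the identity, and both maps are $*$-isomorphisms.

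\textbf{Step 2: transfer the defining properties.} Amplifying, $\tilde\phi^{(m)} : M_m(C^*_\env(E)) \to M_m(C^*_\env(F))$ is a unital $*$-isomorphism satisfying $\tilde\phi^{(m)}\circ \imath_E^{(m)} = \imath_F^{(m)}\circ\phi^{(m)}$.

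\begin{itemize}
\item \emph{Non-singularity.} A $*$-isomorphism preserves invertibility. Hence $\imath_E^{(m)}(x)$ is invertible if and only if $\imath_F^{(m)}(\phi^{(m)}(x))$ is invertible, i.e.\ $x \in G(E,m)$ if and only if $\phi^{(m)}(x) \in G(F,m)$.
\item \emph{Self-adjointness.} $\phi$ is positive, so it is $*$-preserving, and hence $\phi^{(m)}(x^*) = \phi^{(m)}(x)^*$.
\end{itemize}

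Therefore $\phi^{(m)}$ maps $H(E,m)$ onto $H(F,m)$ and $G(E,m)$ onto $G(F,m)$. Its inverse $(\phi^{-1})^{(m)}$ does the same in the opposite direction.

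\textbf{Step 3: continuity.} $\phi$ is a complete isometry, since unital complete order isomorphisms are completely isometric. So $\phi^{(m)}$ and its inverse are isometries on $M_m(E)$ and $M_m(F)$, in particular continuous. This gives the homeomorphisms claimed above and completes the argument.

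The only nontrivial ingredient is Step 1, the functoriality of the $C^*$-envelope under complete order isomorphisms. I would cite Hamana's theorem (see \cite{Pau02}) rather than reprove it. Everything else is a direct verification.
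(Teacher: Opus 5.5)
Your proof is correct. It is the natural argument for this statement: extend the unital complete order isomorphism to a $*$-isomorphism $\tilde\phi$ of the $C^*$-envelopes, so that $\phi^{(m)}$ is an isometric, adjoint-preserving bijection that preserves invertibility in the envelope. It therefore restricts to homeomorphisms $H(E,m)\to H(F,m)$ and $G(E,m)\to G(F,m)$. The paper does not prove the proposition here; it only recalls it from earlier work, where it follows from the compatibility of these invariants with ucp maps that lift to the $C^*$-envelope — the same mechanism your argument uses.
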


\begin{example}
  \label{ex:V-C}
  Consider $E=\C$. Then $\V_0(\C,m)$ is the set of path components of invertible hermitian $m \times m$ matrices. Any such matrix $x$ can be diagonalized with a unitary matrix, and since the unitary group $U(m)$ is connected, there is a continuous path between $x$ and the corresponding diagonal matrix. In turn, this diagonal matrix is path-connected (in the space of invertible hermitian matrices) to the corresponding signature matrix, which is unique up to ordering.  In other words, $\V_0(\C,m)$ can be parametrized by the signature $s$ of 
  the hermitian forms
  , yielding an isomorphism
  \begin{equation}
    \label{eq:V-C-n}
  \V_0(\C,m) \cong \{  -m , -m+2 ,\ldots, m-2, m \}.
 \end{equation}
\end{example}
Concerning the invariant $\V_1$, we record the following from \cite{Sui25b}:
 \begin{prop}
      \label{prop:K1}
Suppose that $E$ is a unital operator system with finite-dimensional $C^*$-envelope. Then for any $m \geq 1$ the space $G(E,m)$ is path connected. Consequently, $\V_1(E,m) = \{ [1^{\oplus m}] \}$ for all $m \geq 1$.
  \end{prop}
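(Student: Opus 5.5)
The plan is to show that $G(E,m)$ is an open, connected subset of the finite-dimensional complex vector space $M_m(E)$. The point is that its complement inside $M_m(E)$ is the zero set of a single nonzero complex polynomial, which has real codimension at least two and so cannot disconnect.

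First, since $C^*_\env(E)$ is finite-dimensional, it is a direct sum $\bigoplus_{k=1}^r M_{d_k}(\C)$. Hence $M_m(C^*_\env(E)) \cong \bigoplus_k M_{md_k}(\C)$. An element of this algebra is invertible exactly when each block is invertible, that is, when the product of the block determinants is nonzero. Composing with the linear map $\imath_E^{(m)}$ gives a complex polynomial function
\[
p(x) = \prod_{k=1}^r \det\bigl(\pi_k(\imath_E^{(m)}(x))\bigr)
\]
on the finite-dimensional complex vector space $V = M_m(E)$. By Definition \ref{defn:non-sing}, $G(E,m) = \{x \in V : p(x) \neq 0\}$. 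The polynomial $p$ is not identically zero, since $1^{\oplus m} \in M_m(E)$ maps to the unit and so $p(1^{\oplus m}) = 1$. Here we use that $\imath_E$ is unital and $E$ contains the unit.

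Second, I will invoke the standard fact that the complement of the zero set of a nonzero polynomial in $\C^N$ is path connected. The proof goes as follows. Take $x, y$ with $p(x)p(y) \neq 0$ and consider the complex line $\lambda \mapsto x + \lambda(y-x)$. The one-variable polynomial $q(\lambda) = p(x+\lambda(y-x))$ satisfies $q(0) \neq 0$, so it is not identically zero and has only finitely many roots in $\C$. Since $\C$ minus finitely many points is path connected, we can choose a path from $\lambda=0$ to $\lambda=1$ that avoids these roots. Its image is a path from $x$ to $y$ inside $G(E,m)$.

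Hence $G(E,m)$ is path connected, and every element is connected to $1^{\oplus m}$, which gives $\V_1(E,m) = \{[1^{\oplus m}]\}$. There is no real obstacle in this argument. The only point needing care is that the line argument uses complex scalars and therefore works only because $G(E,m)$ is not required to be self-adjoint; this is exactly why no such statement holds for $H(E,m)$.
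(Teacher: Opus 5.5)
Your proof is correct: $G(E,m)$ is the nonvanishing locus, in the finite-dimensional complex vector space $M_m(E)$, of the polynomial $x\mapsto\prod_k\det\pi_k(\imath_E^{(m)}(x))$, which equals $1$ at $1^{\oplus m}$, and your complex-line argument establishes the required path-connectedness directly. The paper does not reprove this proposition here (it quotes it from \cite{Sui25b}), but this is the same mechanism it relies on elsewhere through \cite[Proposition 2.2.3]{Chi89}, so your argument takes essentially the same approach.
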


 Of interest is also the limit structure of increasing matrix size. For this we consider the direct system of sets $(\V_p(E,m), \imath_{mm'})$ where for $m' \geq m$
\begin{align}
  \label{eq:dir-syst}
  \imath_{mm'}: \V_p(E,m) &\to \V_p(E,m');\\
[x]_m & \mapsto [x \oplus e_{m'-m}]_{m'}. \nonumber
\end{align}
The corresponding direct limit structure can also be described more concretely by the following equivalence relation: for $x \in G(E,m)$ and $x' \in G(E,m')$ we write $x \sim x'$ if there exists a $k \geq m,m'$ such that $ x \oplus e_{k-m} \sim_k x'\oplus e_{k-m'} $ in $G(E,k)$ (and similarly for hermitian forms in $H(E,m)$). We will write $[x]_E$, or simply $[x]$, for the equivalence class corresponding to $x \in G(E,m)$ or $H(E,m)$ and $\V_p(E) := \amalg_m \V_p(E,m)/_\sim$ for the corresponding set of equivalence classes. 
\begin{prop}
  The set $\V_p(E)$ is the direct limit $\varinjlim \V_p(E,m)$ of the direct system \eqref{eq:dir-syst}. Moreover, it is a semigroup when equipped with the direct sum $[x]+ [x'] = [x \oplus x']$ and identity element $0 = [e]$. 
\end{prop}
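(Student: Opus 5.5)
We have to prove two things: that $\V_p(E)$ with the relation $\sim$ is the direct limit of the system \eqref{eq:dir-syst}, and that direct sum makes it a semigroup with identity $[e]$.

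\textbf{Step 1: the maps $\imath_{mm'}$.} First check that $\imath_{mm'}$ is well defined. If $x \in G(E,m)$ (or $H(E,m)$), then $\imath_E^{(m')}(x \oplus e_{m'-m}) = \imath_E^{(m)}(x) \oplus 1_{m'-m}$. This is invertible in $M_{m'}(C^*_\env(E))$, and self-adjoint whenever $x$ is. Moreover $t \mapsto x_t \oplus e_{m'-m}$ is continuous. Hence paths in $G(E,m)$ or $H(E,m)$ are sent to paths in $G(E,m')$ or $H(E,m')$, and $\imath_{mm'}$ descends to $\pi_0$.

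The compatibility $\imath_{m'm''}\circ \imath_{mm'} = \imath_{mm''}$ and $\imath_{mm}=\id$ are immediate, since $(x\oplus e_{m'-m})\oplus e_{m''-m'} = x \oplus e_{m''-m}$.

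\textbf{Step 2: the direct limit.} For a direct system of sets over the directed set $\N$, the direct limit is $\amalg_m \V_p(E,m)$ modulo the relation: $[x]_m$ is identified with $[x']_{m'}$ iff some $k \geq m,m'$ has $\imath_{mk}[x]_m = \imath_{m'k}[x']_{m'}$. Unwinding, this is exactly $x\oplus e_{k-m} \sim_k x' \oplus e_{k-m'}$.

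This relation is an equivalence relation. For transitivity, take witnesses $k$ and $k'$ and push both up to $\max(k,k')$, using Step 1. This identifies $\V_p(E)$ with $\varinjlim \V_p(E,m)$, and the universal property is the standard one for set-valued colimits.

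\textbf{Step 3: the semigroup structure.} For $x\in G(E,m)$ and $x'\in G(E,m')$, the block sum $x\oplus x'$ lies in $G(E,m+m')$ (and in $H$ in the hermitian case), because $\imath_E^{(m+m')}(x\oplus x') = \imath_E^{(m)}(x)\oplus\imath_E^{(m')}(x')$.

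\emph{Main obstacle: well-definedness of the sum.} We must show it is independent of representatives.
\begin{itemize}
\item Paths $x_t$ and $x'_t$ give a path $x_t\oplus x'_t$, so the sum respects $\sim_m$.
\item For stabilization, compare $(x\oplus e_{j})\oplus x'$ with $(x\oplus x')\oplus e_{j}$. These differ by conjugation with a permutation matrix $P$: $P^*(y)P$ for a scalar unitary $P \in M_N(\C)$.
\item Choose a path $P_t$ in the connected group $U(N)$ from $1$ to a permutation matrix. Conjugation $y\mapsto P_t^* y P_t$ preserves $M_N(E)$, since $E$ is a vector space with scalar matrices acting entrywise. It also preserves non-singularity and self-adjointness.
\item Hence $P^*yP \sim_N y$ for any $y$ in $G(E,N)$ or $H(E,N)$.
\end{itemize}
With this, any representatives $x\oplus e_{k-m}$ and $x'\oplus e_{k'-m'}$ give sums equivalent to $(x\oplus x')\oplus e_{\cdot}$, so the sum is well defined.

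\textbf{Step 4: associativity and identity.} Associativity holds on the nose, since $(x\oplus x')\oplus x'' = x\oplus(x'\oplus x'')$. For the identity, $x\oplus e = x\oplus e_1 \sim x$ by the definition of $\sim$ (take $k=m+1$). Likewise $e\oplus x$ is a permutation conjugate of $x\oplus e$, so it is equivalent to $x$ by the permutation argument in Step 3. Hence $0=[e]$ is a two-sided identity.

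The same permutation argument shows that $x\oplus x'\sim x'\oplus x$, so the semigroup is in fact abelian.
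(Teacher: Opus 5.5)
Your proof is correct and complete. The paper gives no proof of this proposition; it recalls it from earlier work. Your argument is the standard one: identify $\sim$ with the usual colimit relation for a direct system of sets over $\N$, and use a path in $U(N)$ to a scalar permutation matrix to show that block sum commutes with stabilization up to homotopy.

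One point is phrased a little loosely. A general representative of $[x]$ is any $y \sim x$, not only $x \oplus e_j$. Your two bullets still suffice, because $\sim$ is exactly the equivalence relation generated by the path relations $\sim_k$ and the stabilization moves $x \mapsto x \oplus e_j$. Checking that $\oplus$ respects these moves is therefore what well-definedness requires. The permutation homotopy is needed only when stabilizing the first summand and for $e \oplus x$.
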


\begin{defn}
  Let $(E,e)$ be a unital operator system and let $p=0,1$. We define the $K$-theory group $K_p(E)$ of $E$ to be the Grothendieck group of $\V_p(E)$. 
\end{defn}

\begin{example}
  \label{ex:K-C}
  Continuing Example \ref{ex:V-C}, consider $E=\C$. Note that the direct sum of two hermitian forms translates to the addition of the corresponding signatures: $(s,s')\mapsto s+s'$.
  
The connecting maps can be expressed in terms of the signature as $\imath_{mm'} (s) = s + m'-m$. Moreover, there are commuting diagrams for any $m' \geq m \geq 1$:
 \begin{equation*}
     \xymatrix {
       \V_0(\C,m) \ar[rd]_{\rho_m} \ar[rr]^{\imath_{mm'}}&& \V_0(\C,m') \ar[ld]^{\rho_{m'}}\\
&       \N 
     }
  \end{equation*}
 where $\rho_m([x])$ is defined to be the so-called {\em negative index of inertia} of the representative $x$, {\em i.e.} the number of negative eigenvalues of the hermitian form $x$. When expressed in terms of the signature $s$ of $x$, we have $\rho_m(s) =\frac 12 (m-s)$ from which commutativity of the diagram follows at once. This yields that $\varinjlim \V_0(\C,m) \cong \N$, compatibly with $K_0(\C) \cong \Z$, as it should. 
\end{example}

We end this section with three crucial properties of the above $K_p$-groups, obtained in \cite{Sui24,Sui25b}. 
\begin{prop}
\label{prop:K0-properties}
  \begin{enumerate}
  \item   For a unital $C^*$-algebra $A$ the group $K_p(A)$ is isomorphic to the $C^*$-algebraic $K_p$-theory group of $A$.
    \item For two unital operator systems $E$ and $F$ we have $\V_p(E \oplus F ,m) \cong \V_p(E,m) \times \V_p(F,m)$.
\item   Let $E$ be a unital operator system and let $N$ be a natural number. Then $\V_p(E)$ is isomorphic to $\V_p(M_N(E))$ (and so are the corresponding $K_p$-groups).
  \end{enumerate}
    \end{prop}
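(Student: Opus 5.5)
The plan is to prove the three items separately, working at the level of matrices over $E$ and the fixed embedding $\imath_E$. All identifications will be made through maps that preserve the defining property: invertibility in the $C^*$-envelope, together with self-adjointness in the case $p=0$.

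\emph{Item (1).} For a unital $C^*$-algebra $A$ we have $C^*_\env(A)=A$. Hence $H(A,m)$ is the space of invertible self-adjoint elements of $M_m(A)$, and $G(A,m)=GL_m(A)$.

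For $p=1$ the classical picture is $K_1(A)=\varinjlim \pi_0(GL_m(A))$, with stabilisation $x\mapsto x\oplus 1$. This is exactly $\V_1(A)$, apart from one point: $\V_1(A)$ is already a group, since $[x]+[x^{-1}] = [x\oplus x^{-1}] = [1\oplus 1]$ by the Whitehead rotation homotopy. Its Grothendieck group therefore returns it unchanged.

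For $p=0$, I would use the continuous functional calculus. The map $x\mapsto \chi_{(-\infty,0)}(x)$ is continuous on invertible self-adjoints, since the spectrum avoids $0$. Writing $p_x$ for this negative spectral projection, we get a deformation retraction of $H(A,m)$ onto the symmetries $1-2p$, along the path $x_t = (1-t)x + t(1-2p_x)$, which stays invertible. This identifies $\V_0(A,m)$ with homotopy classes of projections in $M_m(A)$. Stabilisation by $e=1$ adds a zero projection, and direct sum corresponds to direct sum of projections. So $\V_0(A)$ becomes the Murray--von Neumann semigroup $\V(A)$, and its Grothendieck group is $K_0(A)$. This matches Example~\ref{ex:K-C}.

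\emph{Item (2).} For item (2) I would use that $C^*_\env(E\oplus F)=C^*_\env(E)\oplus C^*_\env(F)$. This is the step I expect to need the most care. It follows because the Shilov boundary ideal of the direct sum splits: the central projections $(1,0)$ and $(0,1)$ lie in the operator system itself, and are hence preserved by the envelope construction. Given this, $M_m(E\oplus F)=M_m(E)\oplus M_m(F)$. An element is invertible (respectively self-adjoint) exactly when both components are, so $H$ and $G$ split as topological products. Since $\pi_0$ commutes with finite products, the claim follows.

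\emph{Item (3).} For item (3) I would use $C^*_\env(M_N(E))=M_N(C^*_\env(E))$, again via Hamana's theorem, to get $M_m(M_N(E))=M_{mN}(E)$ with the same non-singularity condition. This gives $\V_p(M_N(E),m)\cong\V_p(E,mN)$.

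Passing to the limit, the stabilisation in $M_N(E)$ adds $1_N^{\oplus(m'-m)}$, that is, $e^{\oplus N(m'-m)}$. So the direct system for $M_N(E)$ is a cofinal subsystem of the one for $E$, indexed by the multiples of $N$. Cofinal subsystems have the same direct limit, and the identification respects $\oplus$. Hence $\V_p(M_N(E))\cong\V_p(E)$ as semigroups, and the $K_p$-groups agree.
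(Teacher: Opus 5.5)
The paper does not prove this proposition. All three items are quoted from the author's earlier work, so your proposal supplies a self-contained argument rather than competing with an in-text one, and it is correct.

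What your version buys is that it makes explicit the two envelope identities
\[
C^*_\env(E\oplus F)\cong C^*_\env(E)\oplus C^*_\env(F), \qquad C^*_\env(M_N(E))\cong M_N(C^*_\env(E)).
\]
The later uses of items (2) and (3) silently rest on these, for example the splitting $E(\cR)=\bigoplus_K E(\cR_K)$ and the passage from $\V_0(\cdot,m)$ to $K_0$.

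Two places deserve one more line.

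\textbf{Item (1).} You identify $\varinjlim_m\pi_0(\mathrm{Proj}(M_m(A)))$, under the maps $p\mapsto p\oplus 0$, with the Murray--von Neumann semigroup. This uses the standard fact that, after stabilisation, homotopy, unitary equivalence and Murray--von Neumann equivalence of projections coincide. Your choice of the \emph{negative} spectral projection is what makes stabilisation by $e=1$ correspond to adding a zero projection.

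\textbf{Item (2).} The phrase ``preserved by the envelope construction'' should be replaced by the boundary-ideal argument. Because $E\oplus 0$ and $0\oplus F$ lie in $E\oplus F$, the algebra $C^*_\env(E)\oplus C^*_\env(F)$ is a $C^*$-cover of $E\oplus F$. Hence $C^*_\env(E\oplus F)$ is its quotient by some ideal, which must have the form $J_1\oplus J_2$. Restricting the completely isometric envelope embedding to $E\oplus 0$ shows that $J_1$ is a boundary ideal for $E$ inside $C^*_\env(E)$, so $J_1=0$. Likewise $J_2=0$.

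\textbf{Item (3).} The same argument, applied to ideals $M_N(J)$ of $M_N(C^*_\env(E))$, gives the identity $C^*_\env(M_N(E))\cong M_N(C^*_\env(E))$. After that, your cofinality argument over multiples of $N$, using $1_{M_N(E)}=e^{\oplus N}$, is complete.
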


\section{$K$-invariants of the Toeplitz operator system}
\label{sect:Toep}
We write $\cT{n}$ for the operator system of $n \times n$ Toeplitz matrices, {\em i.e.} matrices of the form
  \begin{equation}
  T = \begin{pmatrix}
t_0 & t_{-1} & t_{-2} & \cdots & t_{-n+1} \\
t_1 & t_0 & t_{-1} & \cdots & t_{-n+2} \\
t_2 & t_1 & t_0 & \cdots & t_{-n+3} \\
\vdots & \vdots & \vdots & \ddots & \vdots \\
t_{n-1} & t_{n-2} & t_{n-3} & \cdots & t_0
  \end{pmatrix}.
  \label{eq:toeplitz}
  \end{equation}
  This operator system was considered at length in \cite{CS20}, arising from spectral truncations of the circle, and where it was also shown that the $C^*$-envelope of this operator system is $M_n(\C)$.
  
  For the $K_0$-invariants $\V_0(\cT{n},m)$ we need to consider hermitian $m \times m$ matrices with entries in the Toeplitz matrices. By identifying $M_m(\cT{n})\cong \cT{n} \otimes M_m(\C)$, we may equivalently consider block Toeplitz matrices of the following form:
  \begin{equation}
  T = \begin{pmatrix}
T_0 & T_{-1} & T_{-2} & \cdots & T_{-n+1} \\
T_1 & T_0 & T_{-1} & \cdots & T_{-n+2} \\
T_2 & T_1 & T_0 & \cdots & T_{-n+3} \\
\vdots & \vdots & \vdots & \ddots & \vdots \\
T_{n-1} & T_{n-2} & T_{n-3} & \cdots & T_0
  \end{pmatrix}; \qquad (T_j \in M_m(\C)).
  \label{eq:toeplitz-block}
  \end{equation}
  We note that $T \in M_m(\cT{n})$ is a hermitian form iff it is hermitian and non-singular as a block Toeplitz matrix.

\subsection{Path components of non-singular hermitian block Toeplitz matrices}
  Let us now analyze the $K_0$-invariants $\V_0(\cT{n},m)$ introduced before for the case of the Toeplitz operator system. This amounts to analyzing the number of path-components of the non-singular hermitian block Toeplitz matrices. 

\begin{thm}
  \label{thm:toepl}
  \begin{enumerate}
    \item 
      The space of non-singular hermitian block Toeplitz matrices with signature $(p,q)$, denoted by $H(\cT{n},m)_{p,q}$, is path-connected.
    \item The space $H(\cT{n},m)$ of non-singular hermitian block Toeplitz matrices has $nm+1$ path-connected components, which are labeled by matrix signatures. Consequently, $\V_0(\cT{n},m) \cong \{-nm, -nm+2, \ldots, nm\}$.
      \end{enumerate}
\end{thm}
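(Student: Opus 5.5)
The plan is to reduce everything to part (1). Part (2) then follows from it by simple counting.

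For (2), note first that the signature is continuous on $H(\cT{n},m)$. Indeed, $\cT{n}\otimes M_m(\C)$ sits inside $M_{nm}(\C)$, and along a path of invertible hermitian matrices no eigenvalue can cross zero. So each path component lies in a single $H(\cT{n},m)_{p,q}$ with $p+q=nm$. Every pair $(p,q)$ actually occurs, because block-diagonal matrices of the form $1_n\otimes D$ are block Toeplitz. Here $D$ runs over the diagonal signature matrices in $M_m(\C)$, so these realize the signatures that are multiples of $n$. To reach arbitrary signatures I would instead use diagonal Toeplitz $n\times n$ blocks tensored appropriately; if that is not enough, I would take hermitian Toeplitz matrices $T\in\cT{n}$ with prescribed inertia. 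The latter exist, for example by Carath\'eodory--Fej\'er type constructions, or simply by choosing $t_0$ small and $t_{\pm1}$ so that $T$ is a tridiagonal Toeplitz matrix. Such a matrix has eigenvalues $t_0+2|t_1|\cos(k\pi/(n+1))$, and shifting $t_0$ achieves any number of negative eigenvalues from $0$ to $n$. Direct sums of these (placing each in one diagonal entry of the $m\times m$ block structure) give all $nm+1$ signatures. Combined with (1), this yields $\V_0(\cT{n},m)\cong\{-nm,\dots,nm\}$.

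For (1), the main tool is a block version of the Carath\'eodory/Vandermonde factorization. A positive definite block Toeplitz matrix factors as $T=\sum_k V(z_k)^* A_k V(z_k)$, with $z_k\in S^1$, $A_k\ge0$ and $V(z)=(1,z,\dots,z^{n-1})\otimes 1_m$. For indefinite non-singular $T$, I would aim for an analogous signed decomposition $T=\sum_k V(z_k)^*A_kV(z_k)$ with $A_k$ hermitian. This holds generically because $\cT{n}\otimes M_m$ is spanned by such rank-$m$ pieces.

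The main strategy I would actually try first, however, is a deformation via Schur complements and the Levinson/block Szeg\H{o} recursion. A hermitian block Toeplitz matrix $T$ whose leading principal sections $T^{(k)}$ are all non-singular is parametrized bijectively by $T_0$ (hermitian invertible) together with block Verblunsky/reflection coefficients. The constraint on these coefficients is that each successive Schur complement $D_k$ stays invertible, and the signature of $T$ is the sum of the signatures of $D_0,\dots,D_{n-1}$. This gives a description of the generic part of $H(\cT{n},m)_{p,q}$ as a union of cells indexed by the signature sequences of the $D_k$. Each cell I expect to be connected: the reflection coefficients range over the complement of a "unit sphere" set in an indefinite metric, which is connected in the relevant cases.

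Two things then remain. First, cells with different inertia distributions must be connected to each other. I would do this by moving a negative direction from $D_k$ to $D_{k+1}$ through a matrix where the leading section becomes singular while $T$ itself stays invertible, for instance by passing through $T_0$ singular in the scalar case $n=2$. Second, the non-generic set, where some $T^{(k)}$ is singular, must be handled. It has real codimension $\ge1$ inside the open set $H(\cT{n},m)_{p,q}$, which is an open subset of a real vector space, so every point can be perturbed into the generic part. The obstacle is that codimension one could in principle disconnect the space. So I would show that the degeneracy locus $\{\det T^{(k)}=0\}\cap H_{p,q}$ can be crossed, i.e.\ that on both sides the cells are reachable, using explicit paths in small cases ($n=2$, where $T=\begin{pmatrix}T_0&T_1^*\\T_1&T_0\end{pmatrix}$ is unitarily equivalent to $(T_0+\mathrm{Re}\,\cdot)\oplus(T_0-\cdots)$-type blocks) and induction on $n$. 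This gluing step is where I expect the real difficulty to lie.
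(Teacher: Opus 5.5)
Your treatment of (2) is fine. Signature is constant on path components, and your hermitian tridiagonal Toeplitz matrices realise all $nm+1$ signatures once placed as diagonal entries of an $m\times m$ block matrix. These have eigenvalues $t_0+2|t_1|\cos(k\pi/(n+1))$, and $t_0$ is to be chosen strictly between two consecutive values of $-2|t_1|\cos(k\pi/(n+1))$. This is a valid substitute for the paper's Vandermonde construction.

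The gap is in (1), which carries all the content, and where your proposal stops short of a proof. The Levinson/Schur-complement cells only cover the open set where every leading section $T^{(k)}$ is invertible. Its complement in $H(\cT{n},m)_{p,q}$ is $\bigcup_{k<n}\{\det T^{(k)}=0\}$, a real hypersurface of codimension one, which can a priori disconnect the space. The whole difficulty is to show that cells with different inertia sequences $(\mathrm{In}\,D_0,\dots,\mathrm{In}\,D_{n-1})$ but the same total can be joined across these walls. You only propose to do this ``using explicit paths in small cases and induction on $n$'', without an argument. Connectedness of a single cell in the block case is also only asserted (``I expect''). Such a cell is the set of $T_k\in M_m(\C)$ for which the $m\times m$ Schur complement $D_k$ has prescribed inertia, and $D_k$ is a hermitian expression quadratic in $T_k$ with in general indefinite coefficients. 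So as it stands, (1) is a programme rather than a proof.

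The missing idea is a continuous surjection onto all of $H(\cT{n},m)_{p,q}$, with no condition on leading sections, from a space whose connectedness can be verified. Your signed Carath\'eodory decomposition with nodes $z_k\in S^1$ points in this direction, but it holds only generically. The paper uses the Freund--Huckle factorisation instead:
\begin{itemize}
\item The block shift $E_j\mapsto E_{j+1}$ is an isometry for $[x,y]_T=x^*Ty$, and Witt's theorem extends it to $U$ with $U^*TU=T$.
\item Writing $T=V^*\Lambda V$ and setting $W=VUV^{-1}$ gives $T=K(X,W)^*\Lambda K(X,W)$, with $W\in U(p,q)$ arbitrary and $K(X,W)=(X\ WX\ \cdots\ W^{n-1}X)$ invertible.
\item $W$ may have eigenvalues off the circle and need not be diagonalisable, which is exactly what the unit-circle node picture misses.
\end{itemize}
Connectedness then reduces to that of $\cC{n,m}_{p,q}$, in two steps. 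First, the non-$m$-cyclic $W$ (those with $\dim\ker(W-\lambda\I)>m$ for some $\lambda$) form a set of real codimension $\geq 3$ in $U(p,q)$, so $U(p,q)\setminus\{\text{non-}m\text{-cyclic}\}$ stays connected. Second, for $m$-cyclic $W$ the admissible $X$ form the complement of a complex hypersurface, hence a connected set. In this picture the codimension-one walls that obstruct your approach never arise.
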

Another way of phrasing this is that the inclusion map $\imath_\env^{\cT{n}} : \cT{n} \to C^*_\env(\cT{n}) \cong M_n(\C)$ induces isomorphisms $\V_0(\cT{n},m) \cong \V_0( M_n(\C), m)$ for any $m \geq 1$, intertwining the connecting maps $\imath_{m m'}$ from Equation \eqref{eq:dir-syst} for $E= \cT{n}$ with those of $E = M_n(\C)$. Combining this with Proposition \ref{prop:K0-properties}(3) it follows that  $K_0(\cT{n}) \cong K_0 (C^*_\env (\cT{n})) \cong \Z$.

\bigskip

Our proof of Theorem \ref{thm:toepl} is based on the parametrization of non-singular hermitian block Toeplitz matrices by Krylov matrices \cite{FH88}.
For any $A \in M_{nm}(\C)$ and $X \in M_{nm,m}(\C)$ we define an $nm\times nm$ {\em Krylov matrix} by
$$
K(X,A) = \begin{pmatrix} X & AX & \cdots & A^{n-1} X \end{pmatrix}.
$$ 
Also, we define for signatures $(p,q)$ ({\em i.e.} pairs of non-negative integers such that $p+q = nm$) the space
$$
\cC{n,m}_{p,q} = \{ (X,W) \mid X \in M_{nm,m}(\C), W \in U(p,q) : \det K(X,W) \neq 0 \},
$$
where $U(p,q)$ is the indefinite unitary group in $M_{nm}(\C)$:
$$
U(p,q):= \{ W \in M_{nm}(\C) \mid W^* \Lambda W = \Lambda \}, \qquad \Lambda = \I_p\oplus - \I_q .
$$
Freund and Huckle then established the following generalization of Carath\'eodory's factorization result to non-singular hermitian block Toeplitz matrices \cite{FH88}.
\begin{prop}[{\cite[Corollary 1]{FH88}}]
  \label{prop:param-Toepl-block}
  The following map is surjective and continuous:
  \begin{align*}
    \pi^{(m)} : \cC{n,m}_{p,q} &\to H(\cT{n},m)_{p,q};\\
    (X,W) & \mapsto K(X,W)^* \cdot \Lambda \cdot K(X,W).
    \end{align*}
\end{prop}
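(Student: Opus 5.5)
The plan is to verify three things: that $\pi^{(m)}$ is well defined with values in $H(\cT{n},m)_{p,q}$, that it is continuous, and that it is surjective. Continuity is immediate, since the entries of $K(X,W)^*\Lambda K(X,W)$ are polynomial in the entries of $X$, $W$ and their conjugates.

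\emph{The image lies in $H(\cT{n},m)_{p,q}$.} The matrix $K(X,W)^*\Lambda K(X,W)$ is clearly hermitian. It is non-singular because $\det K(X,W)\neq 0$, and by Sylvester's law of inertia it has signature $(p,q)$. For the block Toeplitz structure, the $(i,j)$ block ($0\le i,j\le n-1$) equals $X^*W^{*i}\Lambda W^jX$. From $W^*\Lambda W=\Lambda$ we get $W^*\Lambda=\Lambda W^{-1}$. Hence for $i\le j$ this block is $X^*\Lambda W^{j-i}X$, and for $i>j$ it is $X^*(W^{-1})^{*(i-j)}\Lambda X$. In both cases the block depends only on $j-i$, which is exactly the block Toeplitz form \eqref{eq:toeplitz-block}.

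\emph{Surjectivity.} Let $T\in H(\cT{n},m)_{p,q}$. By Sylvester's law there is an invertible $S\in M_{nm}(\C)$ with $T=S^*\Lambda S$. Write $S=(S_0\ S_1\ \cdots\ S_{n-1})$ in blocks of $m$ columns. It suffices to find $W\in U(p,q)$ with $WS_j=S_{j+1}$ for $0\le j\le n-2$. Then, with $X:=S_0$, we get $K(X,W)=S$, so $\det K(X,W)\neq 0$ and $\pi^{(m)}(X,W)=T$.

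To construct $W$, let $V$ be the column span of $S_0,\dots,S_{n-2}$ and $U$ the column span of $S_1,\dots,S_{n-1}$. Both have dimension $(n-1)m$ because $S$ is invertible. Define the linear bijection $W_0:V\to U$ by $S_ja\mapsto S_{j+1}a$ for $a\in\C^m$. With respect to the indefinite form $\langle u,v\rangle_\Lambda=u^*\Lambda v$ on $\C^{nm}$, we have $(S_{j+1}a)^*\Lambda S_{k+1}b=a^*T_{(j+1,k+1)}b=a^*T_{(j,k)}b=(S_ja)^*\Lambda S_kb$, using the Toeplitz property of $T$. Thus $W_0$ is an injective isometry between subspaces of the non-degenerate hermitian space $(\C^{nm},\langle\cdot,\cdot\rangle_\Lambda)$.

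\emph{Main obstacle.} The key step is extending $W_0$ to a $\Lambda$-unitary $W$ on all of $\C^{nm}$, which is exactly what membership in $U(p,q)$ requires. The subtlety is that $V$ may be degenerate for $\langle\cdot,\cdot\rangle_\Lambda$ (the upper-left $(n-1)m$ block of $T$ need not be invertible), so one cannot simply extend by a map between orthogonal complements. I would invoke Witt's extension theorem for non-degenerate hermitian forms over $\C$, which allows arbitrary, possibly degenerate, subspaces.

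Alternatively, one can argue directly. Split off the radical $R=V\cap V^{\perp}$ and write $V=R\oplus V'$ with $V'$ non-degenerate; then $W_0$ maps $R$ onto the radical of $U$. Choose a subspace $R'$ paired non-degenerately with $R$ inside $V'^{\perp}$, and likewise $R''$ for $W_0R$. Extend $W_0$ to a map $R'\to R''$ respecting the pairing, so that the extended map is an isometry between the non-degenerate spaces $V'\oplus R\oplus R'$ and its image. Finally, choose any isometry between their orthogonal complements. These complements are non-degenerate of the same signature, since both total spaces have signature $(p,q)$. The resulting $W$ lies in $U(p,q)$ and extends $W_0$, which completes the proof.
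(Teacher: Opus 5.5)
Your proposal is correct and is essentially the paper's argument. The paper extends the block shift $E_j\mapsto E_{j+1}$ to an isometry $U$ of $(\C^{nm},[\cdot,\cdot]_T)$ via Witt's theorem and sets $W=VUV^{-1}$ with $T=V^*\Lambda V$; this is your construction written in $T$-coordinates rather than $\Lambda$-coordinates (your Sylvester factor $S$ is their $V$, and your $S_j$ is $VE_{j+1}$).

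One harmless slip: for $i>j$ the $(i,j)$ block is $X^*(W^*)^{i-j}\Lambda X=X^*\Lambda W^{j-i}X$, not $X^*(W^{-1})^{*(i-j)}\Lambda X$. The correct expression still depends only on $j-i$, so the Toeplitz conclusion is unaffected.
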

\proof
First note that $\pi^{(m)} (X,W)$ is indeed a hermitian block Toeplitz matrix since its $(i,j)$'th entry is
\begin{equation}
\left( K(X,W)^* \cdot \Lambda \cdot K(X,W) \right)_{ij} = \left\{ \begin{array}{ll} X^* (W^*)^{i-j} \Lambda  X & \text{if } i \geq j \\
  X^* \Lambda W^{j-i} X & \text{if } j \geq i  \end{array} \right.
\label{eq:pi-toepl}
\end{equation}
and which, by Sylvester's Law (for invertible $K(X,W)$), indeed has signature $(p,q)$.

Suppose $T$ is a hermitian invertible block Toeplitz matrix. For $j=1,\ldots n$ we write
$$
E_j : \C^m \to \C^{nm}
$$
for the canonical embedding, so that $\begin{pmatrix} E_1 & E_2 & \cdots & E_{n} \end{pmatrix} = \I_{nm}$. The vector space $\C^{nm}$ has a non-degenerate hermitian form, given by $T$ as:
$$
[x,y]_T := x^* T y ; \qquad (x,y \in\C^{nm}). 
$$
We introduce the subspaces
$$
\cE:= \text{Ran} \{ E_1, \ldots, E_{n-1} \}; \qquad
\mathcal F:= \text{Ran} \{ E_2, \ldots, E_{n} \}
$$
and a block shift operator
$$
S: \cE \to \mathcal F ; \qquad S E_j = E_{j+1}. 
$$
One easily checks that for any $v,w \in \C^m$ we have 
$$
[SE_i v, S E_j w ]_T = v^* E_{i+1}^* T^* E_{j+1} w = v^* T_{i-j} w = [ E_i v, E_j w ]_T. 
$$
In other words, $S: \cE \to \mathcal F$ is an isometry on subspaces in $(\C^{nm}, [\cdot, \cdot]_T)$. By Witt's extension theorem there exists an extension $U: \C^{nm} \to\C^{nm}$ such that $U^* T U = T$.

By Sylvester's Law, for $T$ hermitian invertible and, say, with signature $(p,q)$, there exists $V \in GL_{nm}(\C)$ such that
$$
T = V^* \Lambda V; \qquad \Lambda = \I_p \oplus - \I_q. 
$$
Set $X = VE_1 \in M_{nm,m}(\C)$ and $W = VUV^{-1}$. Then
$$
W^* \Lambda W = (V^{-1})^* U^* V^* \Lambda V U V^{-1} =  (V^{-1})^* U^* T U V^{-1} =  (V^{-1})^* T  V^{-1} =  \Lambda.
$$
So $W \in U(p,q)$. Moreover,
$$
W^k X = VU^k E_1 = VE_{k+1}; \qquad (k=0,\ldots, n-1).
$$
As such,
$$
K(X,W) = V \begin{pmatrix} E_1 & E_2 & \cdots & E_n \end{pmatrix} = V
$$
and this implies that
\[
T = V^* \Lambda V = K(X,W)^* \Lambda K(X,W). \qedhere
\]
\endproof

As a preparation for the study of $\cC{n,m}_{p,q}$, we start with some preliminary notions and results.
\begin{defn}
  Let $n,m \geq 1$. A matrix $A \in M_{nm}(\C)$ is called {\em $m$-cyclic} if there exists an $X \in M_{nm,m}(\C)$ such that $\det K(X,A) \neq 0$.
\end{defn}
  We also introduce the (finite and infinite) cyclic subspace generated by $A$ and a vector $y \in \C^{nm}$:
\begin{align*}
Z_k (y,A) &:= \text{span}_\C \{ y, Ay, \ldots, A^{k-1}y\},\\
Z(y,A)&:=  \text{span}_\C \{ y, Ay, A^2 y \ldots\}.
\end{align*}
We now consider for each $r=1,\ldots ,n$ the spaces of all vectors $y \neq 0$ for which $Z_n(y,A)$ has dimension $r$:
\begin{align*}
  Y_r(A) := \left \{ y \in \C^{nm}\mid \dim_\C Z_n(y,A) = r  \right\}.
\end{align*}
We may decompose $y$ according to the generalized eigenspaces (or Jordan decomposition) of $A$ and write $y = \sum_\lambda y_\lambda$. We write the {\em height} of $y_\lambda$ as
$$
h_\lambda(y) = \min \{ h \geq 0: (A-\lambda \I)^{h} y_\lambda = 0 \}.
$$
\begin{lma}
For any $r=1,\ldots, n-1$ it holds that $y \in Y_r(A)$ iff $\sum_\lambda h_\lambda(y) = r$.
  \end{lma}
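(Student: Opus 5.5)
The plan is to identify $\dim_\C Z_n(y,A)$ with $\min(n,d)$, where $d$ is the degree of the \emph{local minimal polynomial} of $y$ with respect to $A$. I would then show that this polynomial is $\prod_\lambda (x-\lambda)^{h_\lambda(y)}$, so that $d=\sum_\lambda h_\lambda(y)$. Since $r\leq n-1<n$, the equality $\min(n,d)=r$ holds iff $d=r$, which is the statement. This also explains why $r=n$ is excluded: there $\dim Z_n(y,A)=n$ only says $d\geq n$.

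\emph{Step 1 (Krylov dimension).} Let $\mu_y$ be the monic generator of the ideal $\{p\in\C[x] : p(A)y=0\}$. This ideal is nonzero, for instance by Cayley--Hamilton. Put $d=\deg\mu_y$.
\begin{itemize}
\item The vectors $y,Ay,\ldots,A^{d-1}y$ are linearly independent, since a dependence would give a nonzero polynomial of degree $<d$ annihilating $y$.
\item $A^d y$ lies in their span, because $\mu_y(A)y=0$. By induction, every $A^k y$ with $k\geq d$ lies in that span as well.
\end{itemize}
Consequently $\dim Z_k(y,A)=\min(k,d)$ for all $k$, and in particular $\dim Z_n(y,A)=\min(n,d)$.

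\emph{Step 2 (computing $\mu_y$).} Write $\C^{nm}=\bigoplus_\lambda G_\lambda$ for the decomposition into the $A$-invariant generalized eigenspaces, and $y=\sum_\lambda y_\lambda$ accordingly.
\begin{itemize}
\item Because each $G_\lambda$ is $A$-invariant and the sum is direct, $p(A)y=0$ iff $p(A)y_\lambda=0$ for every $\lambda$.
\item Fix $\lambda$ and factor $p=(x-\lambda)^k q$ with $q(\lambda)\neq 0$. On $G_\lambda$ the operator $A-\mu\I$ is invertible for every $\mu\neq\lambda$, so $q(A)|_{G_\lambda}$ is invertible. Hence $p(A)y_\lambda=0$ iff $(A-\lambda\I)^k y_\lambda=0$, iff $k\geq h_\lambda(y)$.
\end{itemize}
Thus $p$ annihilates $y$ iff $(x-\lambda)^{h_\lambda(y)}$ divides $p$ for all $\lambda$. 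It follows that $\mu_y=\prod_\lambda(x-\lambda)^{h_\lambda(y)}$ and $d=\sum_\lambda h_\lambda(y)$.

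\emph{Step 3 (conclusion).} Combining the two steps gives $\dim Z_n(y,A)=\min\bigl(n,\sum_\lambda h_\lambda(y)\bigr)$. For $1\leq r\leq n-1$ this equals $r$ exactly when $\sum_\lambda h_\lambda(y)=r$.

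None of the steps is deep. The only point needing care is the invertibility of $q(A)$ on $G_\lambda$ in Step 2, which is the standard primary decomposition argument. The other point worth stating explicitly is the strict growth of $\dim Z_k(y,A)$ until it stabilizes at $d$ in Step 1, since that is what makes the hypothesis $r<n$ essential.
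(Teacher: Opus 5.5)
Your proposal is correct and follows the paper's proof: the paper also identifies the local minimal polynomial as $\prod_\lambda (t-\lambda)^{h_\lambda}$, reads off $\dim Z(y,A)=\sum_\lambda h_\lambda$, and uses the stabilization of the Krylov dimensions together with $r<n$ to pass to $Z_n(y,A)$. Your version differs only in phrasing this as $\dim Z_n(y,A)=\min(n,d)$, and it spells out the primary-decomposition justification of the factorization of $\mu_y$ that the paper merely asserts.
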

\proof 
The above decomposition $y = \sum_\lambda y_\lambda$ with $y_\lambda$ of height $h_\lambda(y)$ allows to write the unique monic polynomial $m_y(t)$ of least degree such that $m_y(A) y = 0$ as the following product:
$$
m_y (t) = \prod_\lambda (t-\lambda )^{h_\lambda}.
$$
The evaluation map $\C[t] \to Z(y,A), p \mapsto p(A) y$ has kernel $(m_y)$ from which we conclude that $Z(y,A)$ has dimension $\deg m_y$. In combination with the previous paragraph, we may thus conclude that $\dim Z(y,A)= \sum_\lambda h_\lambda$. In other words, $\dim Z(y,A) = r$ iff $\sum_\lambda h_\lambda = r$. 
Now, suppose that $d$ is the smallest integer for which $A^d y$ is in the linear span of $y, Ay, \ldots, A^{d-1}y$. Then all higher powers $A^k y$ with $k > d$ are also in this linear span and $\dim Z(y,A) = d$. In fact, for all $d' > d$ we already have $\dim Z_{d'}(y,A) = d$. Hence, under the assumption that $r<n$ we find $\dim Z(y,A) = r$ iff $\dim Z_n(y,A) = r$. 
\endproof

For a general height vector $\{ h_\lambda\}$ we write
$$
Y_{\{ h_\lambda \}}(A) := \{ y \in \C^{nm} \mid h_\lambda(y) = h_\lambda \}.
$$
This is an open subset of the following complex linear space
$$
V_{\{ h_\lambda\}}= \oplus_\lambda \ker (A-\lambda \I)^{h_\lambda}.
$$
\begin{lma}
  \label{lma:aux-dimker}

  \begin{enumerate}
    \item For any $r=1,\ldots, n-1$ we have
  $$
Y_r(A) = \bigsqcup_{\sum h_\lambda =r } Y_{\{h_\lambda\}} (A)
$$
Moreover, if $\dim ker (A-\lambda \I) \leq m$ for all $\lambda \in \C$ and $\{ h_\lambda\}$ is a height vector such that $\sum_\lambda  h_\lambda =r$, then $Y_{\{ h_\lambda\}} (A)$ is a complex manifold of dimension $\leq m r$.
\item For $r=n$ we have that $Y_n(A)$ is itself a complex manifold of dimension $nm$.
  \end{enumerate}
\end{lma}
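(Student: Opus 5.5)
Part (1) follows from the preceding lemma: for $r\le n-1$, $y\in Y_r(A)$ iff $\sum_\lambda h_\lambda(y)=r$. Since the heights $h_\lambda(y)$ are uniquely determined by $y$ through its generalized eigenspace decomposition, the sets $Y_{\{h_\lambda\}}(A)$ with $\sum h_\lambda = r$ are pairwise disjoint and their union is $Y_r(A)$. For the manifold statement, I would note that $Y_{\{h_\lambda\}}(A)$ is the set of $y\in V_{\{h_\lambda\}}$ whose component $y_\lambda$ lies in $\ker(A-\lambda\I)^{h_\lambda}$ but not in $\ker(A-\lambda\I)^{h_\lambda-1}$. This is the complement of finitely many proper linear subspaces, hence an open subset of the complex vector space $V_{\{h_\lambda\}}$, and so a complex manifold of dimension $\dim V_{\{h_\lambda\}}=\sum_\lambda \dim\ker(A-\lambda\I)^{h_\lambda}$.

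The key estimate is then $\dim\ker(A-\lambda\I)^{h}\le h\cdot\dim\ker(A-\lambda\I)$. I would prove it by induction on $h$. The map $A-\lambda\I$ sends $\ker(A-\lambda\I)^{h}$ into $\ker(A-\lambda\I)^{h-1}$ with kernel $\ker(A-\lambda\I)$, so rank--nullity gives
$$\dim\ker(A-\lambda\I)^h\le \dim\ker(A-\lambda\I)+\dim\ker(A-\lambda\I)^{h-1}.$$
Equivalently, one can count Jordan blocks: each block contributes at most $h$ dimensions to the kernel of the $h$-th power, and there are $\dim\ker(A-\lambda\I)$ blocks. With the hypothesis $\dim\ker(A-\lambda\I)\le m$, this yields $\dim V_{\{h_\lambda\}}\le \sum_\lambda m h_\lambda = mr$. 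When $h_\lambda=0$ the term contributes nothing, consistent with $y_\lambda=0$.

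For part (2), I would observe that $Y_n(A)=\{y : \dim Z_n(y,A)=n\}$ is the set where the $nm\times n$ matrix $(y\ Ay\ \cdots\ A^{n-1}y)$ has full rank $n$. This is the complement of the common zero set of the $n\times n$ minors, which are polynomial in $y$, so $Y_n(A)$ is Zariski open in $\C^{nm}$. It is therefore an open complex submanifold of dimension $nm$, provided it is nonempty. Nonemptiness is the only subtle point. I would either interpret the claim as "open subset of $\C^{nm}$, hence a manifold of dimension $nm$ whenever nonempty", or derive nonemptiness from $m$-cyclicity: if $\det K(X,A)\neq 0$, then each column $y$ of $X$ gives $n$ linearly independent columns $y,Ay,\ldots,A^{n-1}y$ of $K(X,A)$, so $y\in Y_n(A)$. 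The main obstacle I expect is this bookkeeping, together with making sure the disjointness in part (1) is stated correctly; the dimension bound itself is the Jordan-block count above.
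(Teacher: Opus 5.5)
Your proposal is correct and takes essentially the same route as the paper. In part (1) you bound $\dim Y_{\{h_\lambda\}}(A)\le \dim V_{\{h_\lambda\}}\le \sum_\lambda m h_\lambda = mr$ via $\dim\ker(A-\lambda\I)^{h}\le h\dim\ker(A-\lambda\I)$, and in part (2) you exhibit $Y_n(A)$ as the open set where some $n\times n$ minor of $(y\ Ay\ \cdots\ A^{n-1}y)$ is nonzero.

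Beyond the paper, you justify the kernel-power bound, which the paper only asserts. You also correctly note that $Y_n(A)$ can be empty (e.g.\ $A=0$ with $n\geq 2$). This does no harm, since the later argument only uses the upper bound on the dimension.
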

\proof
(1) Note that from the assumption $\dim \ker (A-\lambda \I) \leq m$ it follows that $\dim \ker (A-\lambda \I)^{h_\lambda} \leq m h_\lambda$ for all $\lambda$. Hence, for height vectors with the stated assumption we find
\[
\dim(Y_{\{h_\lambda\}}(A)) \leq  \dim V_{\{h_\lambda\}} \leq \sum_\lambda m h_\lambda = mr.
\]
(2) The space $Y_n(A)$ is characterized by the condition that $\rk \begin{pmatrix} y & Ay & \cdots & A^{n-1} y \end{pmatrix} = n$. Equivalently, at least one $n \times n$ minor is non-zero, hence it is an open subset of $\C^{nm}$. \endproof

\begin{prop}
  \label{prop:m-cyclic-dimker}
A matrix $A \in M_{nm}(\C)$ is $m$-cyclic if and only if $\dim ker (A-\lambda \I) \leq m$ for all $\lambda \in \C$.
\end{prop}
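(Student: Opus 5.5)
Both directions go through the structure of $\C^{nm}$ as a $\C[t]$-module, with $t$ acting by $A$; write $M_A$ for this module. Note first that $\det K(X,A) \neq 0$ means exactly that the vectors $A^k x_i$ ($0 \leq k \leq n-1$, $1\le i\le m$), with $x_i$ the columns of $X$, form a basis of $\C^{nm}$.

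\emph{Necessity.} Suppose $K(X,A)$ is invertible. Then the columns $x_1,\ldots,x_m$ of $X$ generate $M_A$ as a $\C[t]$-module, since their orbits span $\C^{nm}$. For any $\lambda$, the quotient $M_A/(A-\lambda\I)M_A$ is therefore spanned by the images of the $m$ vectors $x_i$, so it has dimension at most $m$. By rank–nullity, $\dim\ker(A-\lambda\I) = \dim \coker(A-\lambda\I) \leq m$. This direction is immediate.

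\emph{Sufficiency.} Assume $\dim\ker(A-\lambda\I)\le m$ for all $\lambda$. By the structure theorem (rational canonical form), $M_A \cong \bigoplus_{i=1}^{s} \C[t]/(d_i)$ with invariant factors $d_1 \mid d_2 \mid \cdots \mid d_s$. The number $s$ of factors equals $\max_\lambda \dim\ker(A-\lambda\I)$, because each cyclic summand contributes at most one dimension to each eigenspace and the summand $\C[t]/(d_s)$ contains every eigenvalue. Hence $s\le m$, and padding with trivial factors $d_i=1$ we may take $s=m$. Since $\sum_i \deg d_i = nm$, we want to regroup this into $m$ blocks, each cyclic of dimension exactly $n$. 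A cyclic summand $\C[t]/(d)$ with $\deg d = n$ and generator $y$ gives $n$ columns $y, Ay,\ldots,A^{n-1}y$ that form a basis of that summand, so $m$ such summands with their generators would give the desired $X$.

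The main obstacle is that the degrees $\deg d_i$ need not all equal $n$. I would handle this with a direct inductive construction instead of insisting on a module decomposition. Choose $x_1$ to be a cyclic generator of the largest summand, so that $\dim Z(x_1,A)=\deg d_m \geq n$. Then choose $x_2,\ldots,x_m$ inductively so that each new block $x_i, Ax_i,\ldots,A^{n-1}x_i$ is linearly independent modulo the span of the previous blocks. The natural tool is the quotient module $M_A/N$, where $N$ is the $\C[t]$-span of the vectors already used; these spans are no longer $A$-invariant, however, so I expect to need the finer combinatorial argument of \cite{FH88}-type. The fallback is a genericity argument. The set of $X$ with $\det K(X,A)\ne0$ is Zariski open, so it suffices to exhibit a single $X$, and that case can be reduced to the Jordan form.

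For the Jordan reduction, one can split each eigenvalue's Jordan chains, which number at most $m$, across $m$ slots. For each slot one then concatenates, over the eigenvalues, pieces of total length $n$. A vector whose $\lambda$-components are tops of chain segments generates a cyclic subspace whose dimension is the sum of the heights, as in the proof of the lemma on $Y_r(A)$. The step to check most carefully is that truncated chain segments still give linear independence when they are stacked. To handle this, I would instead use a triangularity argument in the ordered basis $A^k x_i$.
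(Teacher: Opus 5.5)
\textbf{Necessity.} Your argument is correct. It is the dual of the paper's: the paper picks $y\in\ker(A^*-\bar\lambda\I)$ with $X^*y=0$ and obtains $y^*K(X,A)=0$, whereas you bound $\dim\coker(A-\lambda\I)$ by the number of module generators.

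\textbf{Sufficiency.} This direction is not proved. Each route you list stops exactly at the step that carries the content.
\begin{itemize}
\item An $A$-invariant regrouping of $\bigoplus_i\C[t]/(d_i)$ into $m$ cyclic blocks of dimension $n$ is impossible in general. Take the nilpotent Jordan block $A=J_4$ ($Ae_k=e_{k-1}$, $Ae_1=0$) with $n=m=2$: the module $\C[t]/(t^4)$ is indecomposable, yet $x_1=e_4$, $x_2=e_2$ works.
\item The inductive construction is deferred to an unspecified argument.
\item The Zariski-openness remark gives nothing, since exhibiting a single $X$ with $\det K(X,A)\neq 0$ \emph{is} the claim.
\item The Jordan-chain scheme breaks down once a slot must receive two segments of the same eigenvalue, because heights inside one generalized eigenspace do not add. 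The height of $y_\lambda$ is a maximum, and the top of a truncated segment has height counting the whole chain below it, so the lemma on $Y_r(A)$ does not give what you need. Concretely, let $A$ be nilpotent with chains $f_3\mapsto f_2\mapsto f_1\mapsto 0$ and $g_1\mapsto 0$, and $n=m=2$. One chain per slot gives lengths $3$ and $1$. After truncation, if slot $1$ takes $f_3,f_2$, slot $2$ must carry $f_1$ and $g_1$, and the sum of tops $x_2=f_1+g_1$ has $Ax_2=0$. A valid choice does exist ($x_1=f_3$, $x_2=f_2+g_1$, giving $f_3,f_2,f_2+g_1,f_1$). But it overlaps the Krylov string of $x_2$ with that of $x_1$, which is exactly the triangularity argument you leave unspecified.
\end{itemize}
A minor point: $s=\max_\lambda\dim\ker(A-\lambda\I)$ should be justified via a root of $d_1$, which divides every $d_i$, not via $d_s$.

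\textbf{How to close the gap.} You need an actual existence argument. The paper uses a dimension count. $K(X,A)$ is singular iff $\mathrm{Ran}(X)\subseteq Z_n(y,A^*)^\perp$ for some $y\neq 0$. Stratify $y$ by $r=\dim Z_n(y,A^*)$ and by height vectors. The hypothesis gives $\dim\ker(A^*-\bar\lambda\I)^h\le mh$, so each projectivized stratum has complex dimension $\le mr-1$. Over each $[y]$ the admissible $X$ form a space of dimension $m(nm-r)$. Hence the incidence set has complex dimension $\le nm^2-1$, and its projection to $M_{nm,m}(\C)$ is a null set.

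If you prefer your algebraic route, the right target is not an invariant splitting. It is the statement that $A$ is similar to the block companion matrix of a monic $m\times m$ matrix polynomial $D(t)=t^n\I_m+\sum_{j<n}D_jt^j$ with Smith form $\mathrm{diag}(d_1,\ldots,d_m)$. Then the classes of the standard basis vectors in $\C[t]^m/D\,\C[t]^m$ give $X$, by division with remainder by the monic $D$. The real content is constructing such a $D$ for arbitrary $d_1\mid\cdots\mid d_m$ with $\sum_i\deg d_i=nm$, and this is missing from your proposal. For instance, when $m=2$, $d_1=1$ and $d_2=t^{2n}+c_1t^n+c_0$ with $\deg c_0,\deg c_1<n$, one can take $D=\begin{pmatrix} t^n+c_1 & -1\\ c_0 & t^n\end{pmatrix}$.
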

\proof
Suppose that $A$ is $m$-cyclic, so that $K(X,A)$ is non-singular for some $X \in M_{nm,m}(\C)$; fix such an $X$. Arguing by contradiction, assume that $\dim \ker(A - \lambda \I) > m$. Then also $\dim \ker (A^* -\bar \lambda \I) > m$ so that the map
$$
\ker (A^* -\bar \lambda \I) \to \C^m; \qquad y \mapsto X^* \cdot y
$$
must have a non-zero kernel. Take a non-zero vector $y$ in this kernel, so that $X^*\cdot y = 0$. Since also $A^*\cdot y = \bar \lambda y$ we find for any $k$:
$$
y^* \cdot A^k \cdot X = \lambda^k y^* \cdot X = 0. 
$$
This implies that $y^*\cdot K(X,A) = 0$, contradicting the assumed invertibility.

In the other direction, note that $K(X,A)$ is singular if and only if there exists a non-zero $y \in \C^{nm}$ such that $y \in \ker K(X,A)^*$. This is equivalent to $X^* (A^*)^k \cdot y = 0$ for all $k=0,\ldots, n-1$, which in turn amounts to
$$
\text{Ran} (X) \subseteq Z_n(y,A^*)^\perp.
$$
We will establish that if $\dim \ker (A-\lambda \I) \leq m$ then this condition fails for some $X$. Our strategy is to show that the space for which it holds decomposes into pieces each of which has (real) codimension $\geq 2$. Consequently, it is a proper subset, leaving an $X$ in its complement for which $K(X,A)$ is non-singular.

Recall the decomposition in Lemma \ref{lma:aux-dimker} of $Y_r(A)$ into complex manifolds: for $r<n$ these are labeled by heights ${\{ h_\lambda\}}$ for which $\sum_\lambda h_\lambda =r$ and for $r=n$ already $Y_n(A)$ itself is a complex manifold. Let us treat these two cases separately.

\begin{description}
\item[($r<n$)] For a height vector ${\{ h_\lambda\}}$ for which $\sum_\lambda h_\lambda =r$ we define:
$$
\Sigma_{\{h_\lambda\}}  = \left\{ ([y],X) \in \P(Y_{\{h_\lambda\}} (A^*)) \times M_{nm,m}(\C) \mid \text{Ran} (X) \subseteq Z_n(y,A^*)^\perp \right\}.
$$
We claim that the natural projection map $\rho: \Sigma_{\{h_\lambda\}} \to \P(Y_{\{h_\lambda\}} (A^*))$ turns $\Sigma_{\{h_\lambda\}}$ into a vector bundle over $\P(Y_{\{h_\lambda\}}(A^*))$. Indeed, the condition for $X$ to be in the fiber $\rho^{-1}([y])$ amounts to checking that the $m$ columns of $X$ are in $Z_n(y,A^*)^\perp$, and this varies smoothly in $y$. In other words,
$$
\rho^{-1}([y]) \cong (Z_n(y,A^*)^\perp)^{\oplus m}.
$$
Applying Lemma \ref{lma:aux-dimker}(1) to $A^*$ yields $\dim_\C \P(Y_{\{h_\lambda\}}(A^*)) \leq mr -1$ while the complex dimension of the fibers equals $m (nm- r)$ since $\dim Z_n(y,A^*) = r$. We thus conclude that $\Sigma_{\{h_\lambda\}}$ is a manifold with real $\dim_\R \leq 2 nm^2 -2$. 

\item[($r=n$)]  In this case, we define
$$
\Sigma_n = \left\{ ([y],X) \in \P(Y_n (A^*)) \times M_{nm,m}(\C) \mid \text{Ran} (X) \subseteq Z_n(y,A^*)^\perp \right\}.
$$
This is a rank $m(nm-n)$-vector bundle over the $nm-1$-dimensional complex base manifold $\P(Y_n (A^*))$. As such, $\Sigma_n$ is a manifold of real dimension $2 nm^2-2$. 
\end{description}
The set of matrices $X \in M_{nm,m}(\C)$ for which $K(X,A)$ is singular is precisely the projection of the union of all these manifolds $\Sigma_{\{h_\lambda\}}$ (for $r<n$) and $\Sigma_n$ onto the second coordinate. Given that the dimension of each of these is $\leq 2nm^2-2$, so that every point is critical for the projection onto $M_{nm,m}(\C)$. By Sard's Theorem this image has measure zero. Consequently, the (finite) union of these images still has measure zero, and hence cannot exhaust all of $M_{nm,m}(\C)$. As such, we can find an $X$ in the complement of their union, for which $K(X,A)$ is invertible. 
\endproof

\begin{rem}
  Note that there is an interesting link between the above approach to proving Proposition \ref{prop:m-cyclic-dimker} and control theory. We point the interested reader to {\em e.g.} \cite[Theorem 6.1]{Che99} or \cite[Theorem 5.2.27]{PW98} where it is shown that systems of the form 
  $$
\frac{d}{dt} x = Ax +Xu,
  $$
characterized by 
matrices $A \in M_N(\C), X \in M_{N,m}(\C)$, are controllable iff 
 the $N \times Nm$ Krylov matrix  $\begin{pmatrix} X & AX & \cdots A^{N-1}X \end{pmatrix}$ has rank $N$. In turn, this is equivalent to the condition that the $N \times (N+m)$ matrix $\begin{pmatrix}  A- \lambda \I & X\end{pmatrix}$ has rank $N$ for all $\lambda$. This is all very similar to our case of interest, {\em except} that we consider the Krylov matrix with powers up to $n-1$ and rephrase $m$-cyclicity as $\dim \ker (A-\lambda \I) \leq m$ instead of $N$. In terms of controllability indices, we are considering the special case where all of these are equal to $n$ ({\em cf.} \cite[Section 6.2.1]{Che99}). 
  \end{rem}

\begin{prop}
  \label{prop:path-conn-C-block}
The space $\cC{n,m}_{p,q}$ is path connected. 
  \end{prop}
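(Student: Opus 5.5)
Consider the projection $\mathrm{pr}: \cC{n,m}_{p,q} \to U(p,q)$, $(X,W)\mapsto W$, and argue fibrewise. The plan rests on three facts.

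First, $U(p,q)$ is path connected. By the polar decomposition it deformation retracts onto $U(p)\times U(q)$, and this product is connected.

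Second, the image of $\mathrm{pr}$ is $U(p,q)_{\rm cyc}$, the set of $m$-cyclic elements. For each such $W$ the fibre is
\[
F_W=\{X \in M_{nm,m}(\C) : \det K(X,W)\neq 0\}.
\]
This is the complement in $M_{nm,m}(\C)\cong \C^{nm^2}$ of the zero set of the polynomial $X\mapsto \det K(X,W)$. By Proposition \ref{prop:m-cyclic-dimker} this polynomial is not identically zero, so its zero set is a proper complex hypersurface, of real codimension $2$. Hence each fibre $F_W$ is nonempty, open, dense and path connected.

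Third, and this is the key step, $U(p,q)_{\rm cyc}$ is path connected. By Proposition \ref{prop:m-cyclic-dimker},
\[
U(p,q)_{\rm cyc}=\{W\in U(p,q) : \dim\ker(W-\lambda\I)\le m \ \text{for all } \lambda\}.
\]
The complement is $\{W : \exists\lambda,\ \rk(W-\lambda\I)\le nm-m-1\}$. I would show that this complement is a real-algebraic subset of $U(p,q)$ of real codimension $\ge 2$. Removing such a set from a connected manifold leaves it connected.

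To get the dimension count, stratify the complement by eigenvalue type. Fix $\lambda$ and $k=\dim\ker(W-\lambda\I)\ge m+1\ge 2$. For $|\lambda|=1$, the space of $W\in U(p,q)$ with a $k$-dimensional $\lambda$-eigenspace restricted to a nondegenerate subspace loses of order $k^2-1\ge 3$ real dimensions, counting the usual codimension of repeated eigenvalues for a Lie group. For $|\lambda|\neq 1$, the eigenvalues come in pairs $\lambda,1/\bar\lambda$ on isotropic subspaces, and a similar count gives codimension $\ge 2$.

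A cleaner route avoids Lie-theoretic counts. Pass to a complexification: $U(p,q)$ is a real form of $GL_{nm}(\C)$, and the bad set is the real points of a Zariski-closed subset of codimension $\ge 3$ in $GL_{nm}(\C)$. Alternatively, it suffices to produce an explicit path. Any two cyclic $W_0,W_1$ can be joined inside $U(p,q)$ by a real-analytic path, for instance products of exponentials of generic elements of $\mathfrak u(p,q)$. Along an analytic path the bad condition defines either the whole path or a finite set. I would then perturb the path in a transverse two-parameter family to avoid the finitely many bad points. This works because near a point where $\rk(W-\lambda\I)$ drops, the set of bad parameters has codimension $\ge 2$ in the parameter space.

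Finally, combine the pieces. Given $(X_0,W_0)$ and $(X_1,W_1)$, take a path $W_t$ in $U(p,q)_{\rm cyc}$. Lift it by choosing $X_t$ continuously with $\det K(X_t,W_t)\neq0$. Locally this is easy, since $F_W$ varies openly in $W$: a fixed $X$ stays admissible for nearby $W$. Cover $[0,1]$ by finitely many such intervals. At each junction $t_i$ the two chosen $X$'s both lie in $F_{W_{t_i}}$, and are joined there by a path, using the connectedness of fibres. Concatenating gives a path in $\cC{n,m}_{p,q}$. The main obstacle is the codimension-$\ge2$ statement for non-$m$-cyclic elements inside the real group $U(p,q)$, including the non-unimodular eigenvalue strata. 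I would attack it first via the complexification argument and fall back on an explicit stratum count if that fails.
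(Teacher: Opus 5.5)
Your overall architecture is the paper's. You split off the non-$m$-cyclic $W$, observe that over an $m$-cyclic $W$ the fibre is the complement of the hypersurface $\{\det K(\cdot,W)=0\}$ and hence path connected, and lift a path $W_t$ through the $m$-cyclic locus by finitely many locally admissible choices of $X$, joined inside fibres at the junctions. Your \emph{cleaner route} through $U(p,q)$ as a real form of $GL_{nm}(\C)$ is also exactly the paper's mechanism: the anti-holomorphic involution $\sigma(A)=\Lambda(A^*)^{-1}\Lambda$, whose fixed-point set is $U(p,q)$.

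The gap is that the step carrying all the weight, path-connectedness of $U(p,q)\setminus\cN$, is only asserted, as you acknowledge. Three ingredients are missing.

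First, you need a proof that the set $\cN$ of non-$m$-cyclic matrices is a closed analytic set of complex codimension $\geq (m+1)^2-1\geq 3$. By Proposition~\ref{prop:m-cyclic-dimker}, $\cN=\{A\in M_N(\C):\dim\ker(A-\lambda\I)\geq m+1 \text{ for some } \lambda\}$ with $N=nm$. The phrase \emph{of order $k^2-1$} is not an argument. The paper obtains the bound from the incidence set $\cE=\{(A,E)\in M_N(\C)\times\Gr(m+1,N): A_{|E}=\lambda\,\id_E \text{ for some } \lambda\}$. For fixed $E$ the admissible $A$ form a linear space of dimension $N(N-m-1)+1$, so $\cE$ is a vector bundle over the Grassmannian of total dimension $N^2-(m+1)^2+1$. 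Its projection to $M_N(\C)$ has image $\cN$ and is proper because $\Gr(m+1,N)$ is compact. Remmert's theorem then makes $\cN$ analytic with $\dim\cN\leq\dim\cE$.

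Second, to speak of real points you must check that $\cN$ is $\sigma$-invariant. This follows from $\dim\ker(\sigma(A)-\lambda\I)=\dim\ker(A-\bar\lambda^{-1}\I)$ and Proposition~\ref{prop:m-cyclic-dimker}. You then need that on each smooth stratum of $\cN\cap GL_N(\C)$ (regular part, regular part of the singular locus, and so on, all $\sigma$-stable) the fixed locus of $\sigma$ is a real submanifold of real dimension equal to the complex dimension of the stratum. This makes $\cN\cap U(p,q)$ a finite union of submanifolds of real codimension $\geq 3$ in the $N^2$-dimensional group $U(p,q)$.

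Third, you need a general-position argument that removing such a union keeps $U(p,q)$ path connected. The paper translates a fixed path $\gamma$ by a group element $W$ near the identity, chosen outside the measure-zero images of the maps $(A,t)\mapsto A\gamma(t)^{-1}$ on the strata, and attaches the endpoints by short paths.

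Your two fallback routes do not bypass any of this.
\begin{itemize}
\item The direct stratum count only treats $\lambda$-eigenspaces on which $\Lambda$ is nondegenerate. For $|\lambda|=1$, the eigenvector heading a Jordan chain of length $\geq 2$ is $\Lambda$-isotropic and lies in the radical of $\Lambda$ on the eigenspace, so those strata are missed. The count for $|\lambda|\neq 1$ is also not carried out.
\item The perturbation of an analytic path presupposes exactly the codimension-$\geq 2$ property of the bad parameter set that is in question.
\end{itemize}
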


We will approach this by analyzing the complement of $\cC{n,m}_{p,q} \subseteq M_{nm,m}(\C) \times U(p,q)$, for which we first prove some preliminary results. 
Let us write
\begin{align*}
\cC{n,m}_{p,q} &= M_{nm,m}(\C) \times U(p,q) \setminus \left(  Z_{p,q}^{nc} \cup  Z_{p,q}^{c} \right)
\intertext{where}
 Z_{p,q}^{nc} &= \{ (X,W)\in M_{nm,m}(\C) \times U(p,q)\mid W \text{ is not $m$-cyclic} \},\\
 Z_{p,q}^{c} &= \{ (X,W) \in M_{nm,m}(\C) \times U(p,q) \mid W \text{ is $m$-cyclic but } \det K(X,W) =0 \}.
\end{align*}
In what follows, we will first show that $ Z_{p,q}^{nc}$ decomposes into smooth manifolds of real codimension $\geq 2$, so that their complement in the path-connected real manifold $M_{nm,m}(\C) \times U(p,q)$ remains path-connected. A direct argument then applies to show that the subsequent complement of $Z_{p,q}^{c}$ remains path-connected.

For convenience we write $N=nm$ and $l=m+1$ (assuming $n\geq 2$, the $n=1$ case being trivial) and introduce the following sets
\begin{align*}
 \cN &:= \left\{ A \in M_{N}(\C) \text{ not $m$-cyclic} \right \},\\
 \cE &:= \left\{ (A,E) \in M_{N}(\C) \times \Gr(l,N) \mid A_{|E} = \lambda \cdot \id_E \text{ for some } \lambda \in \C \right\}.
\end{align*}
In view of Proposition \ref{prop:m-cyclic-dimker} the projection $\pi_1$ onto the first coordinate maps $\cE$ onto $\cN$. So let us start by analyzing the structure of $\cE$.

\begin{lma}
  \label{lma:dim-R}
  Let $\rho$ be the restriction to $\cE$ of the projection $\pi_2 : M_N(
  \C) \times \Gr(l,N) \to \Gr(l,N)$ onto the second coordinate. Then $\rho: \cE \to \Gr{(l,N)}$ is a holomorphic vector bundle of rank $N(N-l)+1$. Moreover, as a complex submanifold of $M_N(\C) \times Gr(l,N)$ it is closed and has dimension $N^2-l^2+1$. 
  \end{lma}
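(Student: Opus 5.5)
The plan is to identify each fibre $\rho^{-1}(E)$ as a linear subspace of $M_N(\C)$ of constant dimension, and then to show that these subspaces vary holomorphically with $E$. The total space $\cE$ is then a holomorphic subbundle of the trivial bundle $M_N(\C)\times \Gr(l,N)\to \Gr(l,N)$. The vector bundle structure, the closedness and the dimension count all follow from this.

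\emph{Step 1 (the fibre).} Fix $E\in \Gr(l,N)$ and let $\rho^{-1}(E)$ denote the set of $A$ with $(A,E)\in\cE$. If $A_{|E}=\lambda\,\id_E$ and $A'_{|E}=\lambda'\,\id_E$, then $(\alpha A+\beta A')_{|E}=(\alpha\lambda+\beta\lambda')\id_E$. So $\rho^{-1}(E)$ is a complex linear subspace of $M_N(\C)$. To compute its dimension, choose a complement $E'$ of $E$ and write $A$ in block form with respect to $\C^N=E\oplus E'$. The condition $A_{|E}=\lambda\,\id_E$ says precisely that the first block column equals $\begin{pmatrix}\lambda \I_l\\ 0\end{pmatrix}$. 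The block column on $E'$, an $N\times(N-l)$ block, remains completely free. Since $l\geq 1$, the scalar $\lambda$ is uniquely determined by $A$. Hence
\[
\dim_\C \rho^{-1}(E) = N(N-l)+1,
\]
independently of $E$.

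\emph{Step 2 (local triviality).} Let $E_0=\text{span}\{e_1,\ldots,e_l\}$. Cover $\Gr(l,N)$ by the standard affine charts. On the chart around $E_0$, a point $E$ is the graph of a matrix $C\in M_{N-l,l}(\C)$, and $E=g_C E_0$ with
\[
g_C=\begin{pmatrix}\I_l&0\\ C&\I_{N-l}\end{pmatrix},
\]
which is holomorphic in $C$ and invertible. Then $A_{|E}=\lambda\,\id_E$ if and only if $(g_C^{-1}Ag_C)_{|E_0}=\lambda\,\id_{E_0}$. Hence
\[
(C,B)\mapsto (g_C B g_C^{-1},\, g_C E_0), \qquad B\in F_0:=\rho^{-1}(E_0),
\]
is a biholomorphism from (chart)$\times F_0$ onto $\rho^{-1}(\text{chart})$, and it is linear on fibres. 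Other charts are handled in the same way after a permutation of coordinates. This exhibits $\cE$ as a holomorphic subbundle of rank $N(N-l)+1$ of the trivial bundle.

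\emph{Step 3 (closedness and dimension).} A subbundle of a trivial vector bundle is closed in the total space. Locally it is the kernel of the continuous bundle map to the quotient bundle. Concretely, in the chart above, $(A,E)\in\cE$ if and only if $g_C^{-1}Ag_C$ has first block column of the form $\begin{pmatrix}\lambda\I_l\\0\end{pmatrix}$, which is a system of polynomial equations in $(A,C)$. These equations cut out a complex submanifold, namely the image of the embedding from Step 2. Finally, since $\dim_\C\Gr(l,N)=l(N-l)$, we get
\[
\dim_\C\cE = l(N-l)+N(N-l)+1=(N-l)(N+l)+1=N^2-l^2+1.
\]

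The only point needing care is Step 2, namely making the local trivialisation explicit and checking that it is biholomorphic. Conjugation by the holomorphic family $g_C$ settles this. Closedness is global, but it reduces to the local statement because the charts cover $\Gr(l,N)$.
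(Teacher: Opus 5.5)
Your proof is correct and takes essentially the same route as the paper. Both arguments work in the standard affine charts of $\Gr(l,N)$, find fibre dimension $N(N-l)+1$, trivialize locally over each chart, obtain closedness from polynomial equations chart by chart, and add $l(N-l)$ to get $N^2-l^2+1$; your explicit conjugation by $g_C$ simply makes the holomorphic, fibrewise-linear trivialization more transparent than the paper's elimination of $\lambda$ and free-variable count.
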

\proof
Take an affine chart $U$ of the Grassmannian consisting of the $l$-planes of the form
$$
E_B := \left\{ \begin{pmatrix} v \\ B v \end{pmatrix} \mid v \in \C^l \right\}
$$
for varying $B \in M_{N-l,l}$, and write according to this decomposition:
$$
 A = \begin{pmatrix} A_{11} & A_{12} \\ A_{21} & A_{22} \end{pmatrix}.
 $$
 Then $A_{|E_B}$ is precisely scalar iff 
 \begin{align*}
A_{11} + A_{12} B = \lambda \I_l; \qquad A_{21} + A_{22} B = \lambda B
 \end{align*}
 for some $\lambda \in\C$. By substitution and elimination, these are equivalent to the equations:
 \begin{align*}
   A_{11} + A_{12} B =\frac 1 l \tr (A_{11} + A_{12}B)\I_l; \qquad A_{21} + A_{22} B =  B ( A_{11} + A_{12}B )
   \end{align*}
 which are polynomial equations in the entries of $A$ and $B$. Fixing a plane $E_B$ in this chart of $\Gr(l,N)$ amounts to fixing $B$, so that the above equation leaves $N^2 - l^ 2- l(N-l)+1 = N(N-l)+1$ free variables (the $+1$ being the scalar $\lambda$). This shows that we have a local trivialization:
 $$
\rho^{-1}( U ) \cong  \C^{N(N-l)+1} \times U.
$$
This shows that $\cE$ is a (holomorphic) vector bundle over $\Gr(l,N)$ which as a complex manifold has total dimension $\dim_\C \cE = N(N-l)+1 +  l(N-l) = N^2 + 1 -l^2$. Note that $\cE$ is closed as a complex submanifold: for each chart $U \subset \Gr(l,N)$ the inverse image $\rho^{-1}(U)$ is an algebraic, hence closed subset in $M_N(\C) \times U$ so that, for a (finite) open coordinate cover $\mathcal U$ of $\Gr(l,N)$:
$$
M_N(\C) \times \Gr(l,N) \setminus \cE = \bigcup_{U \in \mathcal U} (M_N(\C) \times U) \setminus \cE = \bigcup_{U \in \mathcal U} \left( (M_N(\C) \times U )\setminus  \rho^{-1}(U) \right)
$$
which is a finite union of open subsets, showing that $\cE$ is closed in $M_N(\C) \times \Gr(l,N)$. 
\endproof

\begin{lma}
The set $\cN$ is a closed analytic subset of $M_N(\C)$ which has codimension $\geq l^2-1$. 
  \end{lma}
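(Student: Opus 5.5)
The strategy is to realize $\cN$ as the image of the closed complex manifold $\cE$ of Lemma \ref{lma:dim-R} under the projection $\pi_1: M_N(\C)\times \Gr(l,N) \to M_N(\C)$. Then we read off closedness, analyticity and the dimension bound from properness of $\pi_1$.

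The first step is the identification $\pi_1(\cE) = \cN$, which holds by Proposition \ref{prop:m-cyclic-dimker}. Indeed, $A$ fails to be $m$-cyclic iff $\dim\ker(A-\lambda\I) \geq m+1 = l$ for some $\lambda$. This in turn holds iff there is an $l$-plane $E \subseteq \ker(A-\lambda \I)$, that is, a plane with $A_{|E}=\lambda\,\id_E$.

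Next, since $\Gr(l,N)$ is compact, the projection $\pi_1$ is a proper map. By Lemma \ref{lma:dim-R}, $\cE$ is a closed complex submanifold, and its defining equations are polynomial in each affine chart. So $\cE$ is a closed analytic (indeed algebraic) subvariety of $M_N(\C)\times\Gr(l,N)$. Remmert's proper mapping theorem then shows that $\cN = \pi_1(\cE)$ is a closed analytic subset of $M_N(\C)$. Alternatively, one can use Chevalley's theorem together with the fact that projective morphisms have closed image. Closedness alone can also be seen directly: if $A_k\to A$ with planes $E_k$, pass to a convergent subsequence $E_k \to E$ in the compact Grassmannian and note that the scalars $\lambda_k$ are bounded because they are eigenvalues of $A_k$.

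For the dimension, the image of a holomorphic map from an irreducible complex manifold has dimension at most the dimension of the source. This gives
$$\dim_\C \cN \leq \dim_\C \cE = N^2-l^2+1,$$
hence $\operatorname{codim}_\C \cN \geq l^2-1$. Irreducibility is available because $\cE$ is a vector bundle over the connected Grassmannian. For the weaker real statement needed later (complement path-connected), one could also argue with Sard: $\pi_1$ restricted to $\cE$ has real image dimension at most $2(N^2-l^2+1) < 2N^2$ whenever $l\ge 2$. I would state codimension as complex codimension of the analytic set.

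The main obstacle is justifying that the image is an analytic set of the claimed dimension rather than merely a measure-zero set. I would handle this by citing Remmert's proper mapping theorem, which gives $\dim \pi_1(\cE)\le \dim \cE$ for proper holomorphic maps. The point to check carefully is that $\cE$ is closed, so that $\pi_1$ restricted to $\cE$ is genuinely proper; this is exactly what Lemma \ref{lma:dim-R} provides.
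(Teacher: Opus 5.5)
Your proposal is correct and follows essentially the paper's proof: you realize $\cN$ as the image of the closed manifold $\cE$ under the projection to $M_N(\C)$, which is proper because the Grassmannian is compact, then apply Remmert's proper mapping theorem for closed analyticity and the bound $\dim\cN\le\dim\cE=N^2-l^2+1$. Your additions, namely the explicit check that $\pi_1(\cE)=\cN$ via Proposition \ref{prop:m-cyclic-dimker} and the direct sequential argument for closedness, only spell out steps the paper takes for granted.
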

\proof
As already mentioned, the restriction to $\cE$ of the projection to the first coordinate has image exactly $\cN$; call this map $F: \cE \to \cN$. Since the Grassmannian is compact, this projection is proper (hence maps closed subsets to closed subsets). It now follows from Remmert's proper mapping theorem (see for instance  also \cite[Theorem 1.1]{BN90} or \cite[Theorem 3.2]{Chi89}) that the image $\cN$ is an analytic subset. 

Moreover, dimension under a holomorphic mapping cannot increase ({\em cf.} \cite[Corollary 8.6]{Dem12}) so that $\dim \cN \leq \dim \cE$ and Lemma \ref{lma:dim-R} then implies that $\dim (\cE) = N^2 - l^2 + 1$. Hence, $\text{codim}_\C (\cN) \geq N^2-(N^2-l^2+1) =l^2-1$.
\endproof

\begin{lma}
  The space $U(p,q)\setminus \cN$ is path-connected.
  \label{lma:path-conn-U}
\end{lma}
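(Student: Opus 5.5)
The plan is to reduce the statement to a dimension count: show that $\cN \cap U(p,q)$ has real codimension at least $2$ in the connected real manifold $U(p,q)$, and then use the general fact that removing a closed, suitably stratified subset of real codimension $\geq 2$ from a connected manifold leaves it path-connected. Two ingredients are standard and I will quote them. First, $U(p,q)$ is a connected real Lie group of real dimension $N^2$. Second, its Lie algebra $\mathfrak u(p,q) = \{ Z \in M_N(\C) \mid Z^*\Lambda + \Lambda Z = 0\}$ satisfies $\mathfrak u(p,q) \oplus i\,\mathfrak u(p,q) = M_N(\C)$. In other words, $U(p,q)$ is a real form (a maximal totally real submanifold) of $GL_N(\C)$.

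The difficulty is that the preceding Lemma controls the \emph{complex} codimension of $\cN$ inside $M_N(\C)$, whereas $U(p,q)$ is not a complex submanifold. So a transversality-type argument is needed to pass from complex codimension in $M_N(\C)$ to real codimension in $U(p,q)$.

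To do this I will use holomorphic charts adapted to the real form. For $W_0 \in U(p,q)$, consider the Cayley-type map
$$
\Phi_{W_0}(Z) = W_0 (\I + Z)(\I - Z)^{-1}.
$$
It is defined and holomorphic on the open set $\{Z \in M_N(\C) : \det(\I - Z) \neq 0\}$ and is a biholomorphism onto an open subset of $GL_N(\C)$ near $Z=0$. It maps a neighbourhood of $0$ in $\mathfrak u(p,q)$ diffeomorphically onto a neighbourhood of $W_0$ in $U(p,q)$; the points $W$ with $W_0^{-1}W + \I$ singular are covered by using other base points. By the preceding Lemma, the preimage $A = \Phi_{W_0}^{-1}(\cN)$ is a closed complex analytic subset of complex codimension $\geq l^2-1$. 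The key step is the standard fact that the set of real points of a complex analytic set $A \subset \C^{N^2}$, relative to a real structure $\R^{N^2} \subset \C^{N^2}$ (here $\mathfrak u(p,q) \subset M_N(\C)$), is a real analytic set of real dimension $\leq \dim_\C A$. I would prove it stratum by stratum: on a smooth stratum $S$ of $A$, the intersection $S \cap \R^{N^2}$ has, at points where it is a manifold, tangent space contained in $T_xS \cap \R^{N^2}$, and a totally real subspace of the complex space $T_xS$ has real dimension at most $\dim_\C S$. Combining these, $\cN \cap U(p,q)$ is locally a real analytic subset of real codimension $\geq N^2 - (N^2 - l^2 + 1) = l^2 - 1 \geq 3$, since $l = m+1 \geq 2$.

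Finally, a closed real analytic subset admits a locally finite stratification into real analytic submanifolds (Łojasiewicz). Each stratum here has codimension $\geq 2$ in the connected manifold $U(p,q)$. Any path between two points of the complement can then be perturbed, by general position (transversality on each stratum, applied inductively on stratum dimension), to avoid all strata. This gives path-connectedness of $U(p,q)\setminus\cN$. I expect the main obstacle to be the dimension estimate for the real points of a complex analytic set. In particular, one must handle the singular locus carefully, via the stratification, rather than only smooth points; the rest is bookkeeping.
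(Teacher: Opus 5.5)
Your proposal is correct. Its overall architecture matches the paper's: convert $\mathrm{codim}_\C \cN \ge l^2-1$ into a real codimension bound $\ge 3$ for $\cN\cap U(p,q)$ inside $U(p,q)$ by a totally-real dimension count, then push a path off a stratified set of codimension $\ge 2$. The two key steps are implemented differently, though.

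The paper realizes $U(p,q)$ as the fixed-point set of the anti-holomorphic involution $\sigma(A)=\Lambda(A^*)^{-1}\Lambda$ of $GL_{nm}(\C)$. It checks via Proposition \ref{prop:m-cyclic-dimker} that $\cN$, and hence every stratum $S_{j,d}$ of its iterated regular/singular decomposition, is $\sigma$-invariant. It then uses that the fixed-point set of an anti-holomorphic involution of a $d$-dimensional complex manifold is automatically a smooth totally real submanifold of real dimension $d$. Its avoidance step is an explicit Sard argument for $\Psi_{j,d}(A,t)=A\gamma(t)^{-1}$: a fixed path is left-translated by a generic $W$ near $1$.

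You instead straighten the real structure with the Cayley charts $\Phi_{W_0}$ and use the general bound $\dim_\R(A\cap\R^{N^2})\le\dim_\C A$ for real points of a complex analytic set. This needs no $\sigma$-invariance of $\cN$ at all, so it would apply to any analytic subset of $GL_{nm}(\C)$ of complex codimension $\ge 2$. The price is that, without the involution, the real points of a complex stratum need not be smooth. You therefore have to import real-analytic dimension theory (dimension realized at manifold points) and a \L{}ojasiewicz stratification, and then appeal to an abstract general-position principle.

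The paper's route is more hands-on: smoothness of the pieces comes for free from the symmetry, and the group structure of $U(p,q)$ replaces transversality by a single application of Sard. You could equally finish with the paper's translation trick once your strata are in hand, since it only needs countably many submanifolds of real codimension $\ge 2$.
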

\proof
First intersect $\cN$ with the open subset $GL_{nm}(\C) \subset M_{nm}(\C)$  and conclude that $\cN^\times:= \cN \cap GL_{nm}(\C)$ is a closed analytic subset of $GL_{nm}(\C)$ with complex codimension $\geq 3$ (since $l=m+1 \geq 2$). 
We will use that $U(p,q)$ is the fixed point subset in $GL_{nm}(\C)$ for the anti-holomorphic involution
$$
\sigma : GL_{nm}(\C) \to GL_{nm}(\C); A \mapsto \Lambda \cdot (A^*)^{-1} \Lambda
$$
in terms of the matrix $\Lambda = \I_p \oplus -\I_q$ introduced before. The set $\cN$ is invariant under this involution as well, since 
$$
\dim \ker (\sigma(A) - \lambda \I) = \dim \ker ( A- \bar\lambda^{-1} \I)
$$
so that by Proposition \ref{prop:m-cyclic-dimker} $m$-cyclicity is $\sigma$-invariant.
Hence $\cN \cap U(p,q) = (\cN^\times)^\sigma$.

We start by decomposing $\cN^\times$ into regular and singular parts, their singular parts, and so on. So $\cN^\times = \sqcup_{j} \cN_j$ where each $\cN_{j+1}^\times: = \text{Sing}(\cN^\times_j)$ and $\cN_0^\times :=\cN^\times$. Write
$$
S_{j,d} := \{ A \in \text{Reg}(\cN_j^\times) \mid \dim_{\C,A} \cN_j = d \}.
$$
The $S_{j,d}$ are either empty or locally closed complex manifolds of $\dim_\C = d \geq n^2m^2-3$, whose union is the closed subset $\cN$.

Since an anti-holomorphic isomorphism preserves regular points, singular points and local dimension, the $S_{j,d}$ are also $\sigma$-invariant. We claim that the fixed-point subsets $S_{j,d}^\sigma$ are smooth real submanifolds of $U(p,q)$ of real codimension $\geq 3$. In fact, we may choose a Riemannian metric on $U(p,q)$ for which $\sigma$ is an isometry (possibly after averaging over $\sigma$). Then, around a fixed point $A \in S_{j,d}^\sigma$ we may decompose
$$
T_A S_{j,d} = ( T_A S_{j,d} )^{d \sigma_A} \oplus i ( T_A S_{j,d} )^{d \sigma_A}
$$
and use the exponential map to obtain a neighborhood of $A$ which is locally isomorphic to $(T_A S_{j,d})^{d \sigma_A}$. We conclude that we may decompose the closed subspace $U(p,q) \cap \cN = (\cN^\times)^\sigma = \sqcup_{j,d} S_{j,d}^\sigma$ with each piece $S_{j,d} ^\sigma$ a real smooth submanifold in $U(p,q)$ of codimension $\geq 3$. 

Now, take $A_0,A_1 \in U(p,q) \setminus (\cN^\times)^\sigma$. Since $U(p,q)$ deformation retracts to $U(p) \times U(q)$ it is path-connected. Hence, there is a path $\gamma:[0,1] \to U(p,q)$ from $A_0= \gamma(0)$ to $A_1 =\gamma(1)$; we rescale to make this path constant near its endpoints. It can then be extended to all of $\R$ as a smooth map $\gamma: \R \to U(p,q)$. On each $S_{j,d}^\sigma$ consider
$$
\Psi_{j,d} : S_{j,d}^\sigma \times \R \to U(p,q) ; \qquad (A,t) \mapsto A \gamma(t)^{-1}. 
$$
The domain has dimension $n^2m^2-3+1 = n^2m^2 -2< \dim_\R U(p,q)$. By Sard's Theorem its image $\Psi_{j,d} ( S_{j,d}^\sigma \times \R )$ in $U(p,q)$ has measure zero, and the same applies to the finite union,
$$
V := \bigcup_{j,d} \Psi_{j,d} ( S_{j,d}^\sigma \times \R ) \subseteq U(p,q). 
$$
Note that for $W \in U(p,q)$ and $t \in \R$ we have that $W \gamma(t) \in S_{j,d}^\sigma$ implies that $ W \in V$. In other words, $W \notin V$ implies that $W\gamma(t) \notin (\cN^\times)^\sigma$ for all $t \in \R$. 

Since $(\cN^\times)^\sigma$ is closed, and $A_0,A_1 \notin (\cN^\times)^\sigma$, we may choose path-connected neighborhoods $U$ of the identity in $U(p,q)$ such that
$$
UA_0 \cup UA_1 \subseteq U(p,q) \setminus (\cN^\times)^\sigma.
$$
The measure-zero set $V$ cannot exhaust $U$, so choose $W \in U \setminus V$, and a path $\eta: [0,1] \to U$ from $1$ to $W$. Then, by the above $W \gamma(t) \notin (\cN^\times)^\sigma$ and the concatenation of paths
$$
t  \mapsto \eta(t) A_0, \qquad t \mapsto W \gamma(t), \qquad t \mapsto \eta(1-t) A_1
$$
lies entirely in $U(p,q) \setminus (\cN^\times)^\sigma$. 

Finally, $U(p,q) \setminus (\cN^\times)^\sigma$ is non-empty because we may take a diagonal matrix with distinct eigenvalues of modulus one: it belongs to $U(p,q)$ and is $m$-cyclic as follows from Proposition \ref{prop:m-cyclic-dimker}.
\endproof


\proof[Proof of Proposition \ref{prop:path-conn-C-block}]
Now consider the subsets of interest for the space $\cC{n,m}_{p,q}$. First of all, Lemma \ref{lma:path-conn-U} shows that
$$
M_{nm,m}(\C) \times U(p,q) \setminus  Z_{p,q}^{nc} =
M_{nm,m}(\C) \times (U(p,q) \setminus  \cN )
$$
is path-connected. For $m$-cyclic $W$, the map $X \mapsto \det (K(X,W))$ is a non-zero polynomial. Hence ({\em e.g.} by \cite[Proposition 2.2.3]{Chi89}) it follows that
$$
\Omega_W := M_{nm,m}(\C) \setminus \{ \det K(\cdot, W)  = 0 \}
$$
is path-connected.

Now, take two elements $(X_0,W_0), (X_1, W_1)$ in $\cC{n,m}_{p,q}$ and choose a path $\{ W_t\}_{t \in [0,1]}$  from $W_0$ to $W_1$ in the path-connected space $U(p,q)\setminus \cN$. For every $t_0$ we choose $X_{t_0} \in \Omega_{W_t}$. Then $\det K(X_{t_0},W_t) \neq 0$ for $t$ in a neighborhood of $t_0$. Compactness of $[0,1]$ yields a finite chain of intervals covering $[0,1]$. At an overlap $t_j$ the two choices $X_j,X_{j+1}$ lie in the same path-connected fiber $\Omega_{W_{t_j}}$ so that they can be joined by a path. Concatenating these paths in the fibers with the pieces of the path along $W_t$ yields a path from $(X_0,W_0)$ to $(X_1, W_1)$.
\endproof

\proof[Proof of Theorem \ref{thm:toepl}]
(1) In Proposition \ref{prop:param-Toepl-block} we showed that the map
$$
\pi^{(m)}: \cC{n,m}_{p,q} \to H(\cT{n},m)_{p,q}
$$
is continuous and surjective, so that path-connectedness of $H(\cT{n},m)_{p,q}$ follows from that of $\cC{n,m}_{p,q}$ (Proposition \ref{prop:path-conn-C-block}).

(2) Since the signature of a matrix is a homotopy invariant, the space of non-singular hermitian block Toeplitz matrices splits into path-connected components labeled by signature. For any possible signature these components are non-empty which can be seen as follows. First of all, there exists a $n \times n$ hermitian invertible Toeplitz matrix for any possible signature. Indeed, take the pair $(X,W) \in \cC{n,1}_{p,q} \subset \C^n \times U(p,q)$ given by 
    $$
    X = \begin{pmatrix} 1 \\ 1  \\ \vdots \\ 1 \end{pmatrix}; \qquad W = \begin{pmatrix} \lambda_1 & 0 & \cdots & 0 \\ 0 & \lambda_2 & \cdots & 0 \\ \vdots & \vdots &\ddots & \vdots \\0 & 0 & \cdots & \lambda_n \end{pmatrix}
      $$
    for all different $\lambda_1, \ldots, \lambda_n$ on the unit circle. In this case, $K(X,W)$ is a Vandermonde matrix
    $$
   K(X,W) = 
 \begin{pmatrix} 1 & \lambda_1 & \cdots & \lambda_1^{n-1} \\
  1 & \lambda_2 &  \cdots & \lambda_2^{n-1} \\
  \vdots & \vdots & & \vdots \\
   1 & \lambda_n  & \cdots & \lambda_n^{n-1}
  \end{pmatrix}
 $$
 with determinant $\prod_{1 \leq i<j\leq n} (\lambda_j - \lambda_i) \neq 0$.
 Consequently, $\pi^{(1)}(X,W)$ is a non-singular hermitian Toeplitz matrix with signature $(p,q)$.

 Finally, if we take $m$ direct sums of different combinations of the above matrices, we obtain non-singular hermitian block Toeplitz matrices in $M_m(\cT{n})$ with signatures taking all possible values.
 \endproof

  \begin{rem}
    For the case of (scalar) Toeplitz matrices, it is possible to derive the result that $H(\cT{n},1)$ has exactly $n+1$ path components by making use of the Cauchy index. The main steps in such a proof are as follows:
\begin{enumerate}
\item Find a suitable generating function $f(z)$ for a non-singular Toeplitz matrix $T$ and show that the signature of $T$ is equal to the Cauchy index of $f(z)$, using \cite{HR10} 
  \item For a given index $k \in \{-n,-n+2,\ldots, n\}$ continuously move the poles and zeros of any $f$ to a standard form, as in \cite{Bro76}, without changing the Cauchy index.
\end{enumerate}
This shows that hermitian Toeplitz matrices of a fixed signature form a path-connected component, which can furthermore be shown to be non-empty.
This was further discussed in \cite{Sch26}. It would be interesting to extend this elegant argument invoking index invariants to the block Toeplitz case; we leave this for future research. 
    \end{rem}

  \section{$K$-invariants of the Fej\'er--Riesz operator system}
  \label{sect:FR}
In \cite{CS20} we also considered the operator system dual of the Toeplitz operator system (this was fully established in \cite{Far21}). The so-called Fej\'er--Riesz operator system is a function system (in the sense of \cite{Arv69}) given by Fourier truncations on the circle, and is defined for $n \geq 2$ by
$$
\FR{n} := \{a \in C(S^1) \mid  a(z) =\sum_{k=-n+1}^{n-1} a_k z^k, z \in S^1  \}
$$
so that $a_k=0$ whenever $|k| \geq n$. The notion of positivity is inherited from $C(S^1)$: $a \geq 0$ iff the function $z \mapsto \sum_{k=-n+1}^{n-1} a_k z^k$ is positive on $S^1 \subseteq \C$.
The same applies to the matrix level, where $M_m(C(S^1)_{(n)})$ is identified with the space of $m \times m$ matrix-valued functions in $z \in S^1$ with the same support constraint in Fourier, so that any $a \in M_m(C(S^1)_{(n)})$ corresponds to
$$ 
a (z) = \sum_{k=-n+1}^{n-1} a_k z^k; \qquad (a_k \in M_m(\C)).
$$
The $C^*$-algebra $C(S^1)$ is also the $C^*$-envelope, so non-degeneracy of $a$ corresponds to invertibility of $a(z)$ for all $z$, or, equivalently, to the non-vanishing of $\det (a(z))$ for all $z \in S^1$.

As is well-known \cite[Chapter 6]{BS90}, to any sequence $(a_k)_{k \in \Z}$ of $m \times m$ matrices we may associate a block Toeplitz operator:
\begin{equation}
  \label{eq:T-a}
T(a) := \begin{pmatrix} a_0 & a_{-1} & a_{-2} &\cdots \\
  a_1 & a_0 & a_{-1}  & \ddots  \\
  a_2 & a_1 & a_0 & \ddots & \\
  \vdots & \ddots & \ddots & \ddots\end{pmatrix}.
  \end{equation}
  Moreover,  for $a \in C(S^1, M_m(\C))$ the operator $T(a)$ is bounded on $\ell^2(\N,\C^m)$.
  We also record two more classical results on block Toeplitz operators, referring to \cite{BS90} (Theorem 6.5 and 6.6 specifically).
\begin{thm}[Gohberg]
Let $a \in  C(S^1, M_m(\C))$. Then $T(a)$ is Fredholm on $\ell^2(\N,\C^m)$ if and only if $\det(a(z))$ has no zeros on $S^1$.
\end{thm}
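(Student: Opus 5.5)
The plan is to prove the two directions separately. Sufficiency will come from an approximate-inverse argument. Necessity will come from a Weyl-sequence argument at a point where $\det a$ vanishes.

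Throughout, identify $\ell^2(\N,\C^m)$ with the Hardy space $H^2(S^1,\C^m)$ and write $P$ for the Riesz projection $L^2(S^1,\C^m)\to H^2(S^1,\C^m)$. Then $T(a)=P M_a P$, where $M_a$ is multiplication by $a$.

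\emph{Sufficiency.} First I would establish the semi-commutator identity
\[
T(ab)-T(a)T(b)=P M_a (1-P) M_b P .
\]
For trigonometric polynomials $a,b$ the right-hand side has finite rank, because $(1-P)M_bP$ only sees finitely many Fourier modes. Since $\|T(a)\|\le\|a\|_\infty$ and finite-rank operators are dense in the compacts, approximating continuous symbols uniformly shows that $T(ab)-T(a)T(b)$ is compact for all $a,b\in C(S^1,M_m(\C))$.

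Now suppose $\det a(z)\neq0$ on $S^1$. Then $a^{-1}\in C(S^1,M_m(\C))$ by Cramer's rule. Applying the identity with $b=a^{-1}$, and again in the reverse order, gives
\[
T(a)T(a^{-1})=1-K_1,\qquad T(a^{-1})T(a)=1-K_2,
\]
with $K_1,K_2$ compact. By Atkinson's theorem $T(a)$ is Fredholm.

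\emph{Necessity.} Suppose $T(a)$ is Fredholm and, for contradiction, $a(z_0)v=0$ for some $z_0\in S^1$ and unit vector $v\in\C^m$. Fredholmness gives a constant $c>0$ with $\|T(a)x\|\ge c\|x-Qx\|$, where $Q$ is the finite-rank projection onto $\ker T(a)$. It therefore suffices to produce unit vectors $x_k\in H^2(S^1,\C^m)$ that tend to $0$ weakly and satisfy $\|T(a)x_k\|\to0$. Weak nullity gives $Qx_k\to0$, since $Q$ has finite rank, and this contradicts the inequality.

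I would take
\[
x_k(z)=z^{N_k}\,\phi_k(z)\,v ,
\]
where $\phi_k$ is an $L^2$-normalized Fejér kernel (an analytic polynomial shifted so that it is concentrated at $z_0$) and $N_k\to\infty$. The shift by $z^{N_k}$ moves the Fourier support to infinity, which gives weak nullity. It also keeps $x_k$ in $H^2$.

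\emph{Main obstacle.} The key estimate is $\|T(a)x_k\|\le\|a x_k\|_{L^2}=\|(a(z)-a(z_0))v\,\phi_k\|_{L^2}$. I would split $S^1$ into a small arc $I_\delta$ around $z_0$ and its complement. On $I_\delta$ the integrand is bounded by $\sup_{I_\delta}\|a(z)-a(z_0)\|$, which is small by continuity. Off $I_\delta$ the $L^2$-mass of the Fejér kernel tends to $0$ as $k\to\infty$. Choosing $\delta$ first and then $k$ gives $\|T(a)x_k\|\to0$.

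The only care needed is that $|\phi_k|^2$ is a genuine approximate identity. It is, because $|\phi_k|^2$ is itself a normalized Fejér-type kernel of higher degree. This yields the contradiction, so $\det a(z)\neq0$ for all $z\in S^1$.
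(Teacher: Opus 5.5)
Your proof is correct. The paper does not prove this statement; it quotes it from B\"ottcher--Silbermann \cite{BS90}, together with the Gohberg--Krein factorization theorem. So there is no in-paper argument to compare against, and what you give is a self-contained proof.

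Your sufficiency half is the standard route: the semi-commutator $PM_a(1-P)M_bP$ is compact for continuous symbols, so $T(a^{-1})$ is a two-sided regularizer and Atkinson's theorem applies. This is also the only direction the paper actually uses. It appears in Lemma \ref{lma:FR-aux1}(2), to know that $T(a)$ is self-adjoint Fredholm before perturbing it to an invertible operator.

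Your necessity half is a direct Weyl-sequence argument. You take a weakly null sequence $z^{N_k}\phi_k v$ concentrated at a zero of $\det a$ and use the lower bound $\|T(a)x\|\ge c\|x-Qx\|$. It needs only closed range and finite-dimensional kernel, so it shows slightly more than stated. What the citation gives in addition is the index formula $\textup{ind}\,T(a)=-\textup{wind}\det a$, which your argument does not produce.

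Two small presentation points.
\begin{enumerate}
\item In the approximation step, what you need is that the compact operators are norm-closed, together with the bound $\|PM_a(1-P)M_bP\|\le\|a\|_\infty\|b\|_\infty$. Density of finite-rank operators in the compacts is the wrong fact to invoke here.
\item The Fej\'er kernel is not itself an analytic polynomial, so you must multiply it by a power of $z$. More simply, take $\phi_k(z)=k^{-1/2}\sum_{j=0}^{k-1}(\bar z_0 z)^j$. Its squared modulus is exactly the Fej\'er kernel rotated to $z_0$, which makes the approximate-identity property immediate.
\end{enumerate}
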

The next result concerns Wiener--Hopf factorization for block Toeplitz operators.

\begin{thm}[Gohberg--Krein]
  Let $(a_k)_{k \in\Z}  \in  \ell^1(\Z,M_m(\C))$ and suppose that $\det a(z)$ does not vanish on $S^1$. Then there exist indices
  $$
\kappa_1 \leq \kappa_2 \leq \cdots \leq \kappa_m
  $$
and invertible matrix-valued functions $a_{+},a_- \in  \ell^1(\Z,M_m(\C))$ with support in $[0,\infty)$ and $(-\infty,0]$, respectively, such that
$$
a(z) = a_-(z) \text{diag}(z^{\kappa_1}, \ldots, z^{\kappa_m}) a_+(z); \qquad (z \in S^1).
$$
Moreover,
$$
\dim \ker T(a) = \sum_{\kappa_j<0} | \kappa_j| ;\qquad \dim \coker T(a) = \sum_{\kappa_j>0} \kappa_j
$$
  \end{thm}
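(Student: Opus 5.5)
The plan has two parts. First construct the factorization in the Wiener algebra $W_m := \ell^1(\Z, M_m(\C))$, viewed as a Banach algebra of matrix functions on $S^1$. Then read off the kernel and cokernel of $T(a)$ from it. We use the splitting $W_m = W_m^+ \oplus W_m^{-,0}$ into Fourier support in $[0,\infty)$ and $(-\infty,-1]$. Both pieces are closed subalgebras, the Riesz projection $P_+$ is bounded on $W_m$, and the Wiener lemma holds entrywise via $\det$. So a function $a_+\in W_m^+$ is invertible in $W_m^+$ iff $\det a_+(z)\neq 0$ for $|z|\leq 1$, and similarly for $a_-$ on $|z|\geq 1$ (including $\infty$).

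\emph{Step 1: reduction to a Laurent polynomial.} Trigonometric polynomials are dense in $W_m$ and $G(W_m)$ is open. We therefore write $a = r\,(1+\varepsilon)$ with $r$ a matrix Laurent polynomial, $\det r$ nonvanishing on $S^1$, and $\|\varepsilon\|_{W_m}$ as small as we like.

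\emph{Step 2: canonical factorization of $1+\varepsilon$.} For small $\varepsilon$ we factor $1+\varepsilon = u_- u_+$ with $u_\pm$ invertible in $W_m^\pm$ and zero indices. We solve $u_- = (1+\varepsilon)u_+^{-1}$ by a fixed point/Neumann series argument: writing $v = u_+^{-1}$, require $P_+^0\big((1+\varepsilon)v\big)=0$ with $v(0)$ normalised. The resulting operator equation is $v = 1 - P_+^0(\varepsilon v)$, which is a contraction for $\|\varepsilon\|<1/\|P_+^0\|$.

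\emph{Step 3: the polynomial case (main obstacle).} Multiply by $z^N$ to get a matrix polynomial $p(z) = z^N r(z)$ with $\det p$ having no zeros on $S^1$. Write $\det p = c\prod (z-\alpha_i)\prod(z-\beta_j)$ with $|\alpha_i|<1<|\beta_j|$. Remove the outside zeros one at a time: at each $\beta$ pick a null vector of $p(\beta)$, complete it to a unimodular constant basis change, and split off a factor $\mathrm{diag}(1,\ldots,z-\beta,\ldots)$ on the right. Since $z-\beta$ is invertible in $W^+$, the product of these factors is absorbed into $a_+$. Inside zeros are handled dually on the left via $(z-\alpha) = z(1-\alpha z^{-1})$, where $1-\alpha z^{-1}$ is invertible in $W^-$; this is where the powers of $z$ appear. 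What remains is a unimodular polynomial matrix times powers of $z$ column by column. Bringing this into the form $a_-\,\mathrm{diag}(z^{\kappa_j})\,a_+$ is the Birkhoff factorization. I would prove it by column-reducing: repeatedly cancel leading coefficients by elementary column operations with polynomial entries, which lie in $W^+$, until the highest-degree coefficient matrix is invertible. The column degrees then become the $\kappa_j$ (after re-inserting $z^{-N}$), and the cofactor $\sum_k c_k z^{k-\kappa_j}$ is invertible in $W_m^-$ because its value at $\infty$ is the invertible leading matrix. Finally, combine this with Step 2, commuting $u_-$ past the diagonal only after absorbing $r$'s $W^+$ factor. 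For this I expect to redo Step 2 for $a_+ (1+\varepsilon) a_+^{-1}$, which is still small after choosing $\varepsilon$ smaller; this bookkeeping is routine.

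\emph{Step 4: indices.} For $b_-\in W_m^-$, $c\in W_m$, $b_+\in W_m^+$ one has $T(b_- c\, b_+) = T(b_-)T(c)T(b_+)$. Moreover $T(a_\pm)$ is invertible with inverse $T(a_\pm^{-1})$. Hence $\dim\ker T(a)$ and $\dim\coker T(a)$ equal those of $T(\mathrm{diag}(z^{\kappa_j})) = \bigoplus_j T(z^{\kappa_j})$. Here $T(z^{\kappa})$ is the forward shift to power $\kappa$, with cokernel of dimension $\kappa$ if $\kappa>0$, and the backward shift with kernel of dimension $|\kappa|$ if $\kappa<0$. This yields the stated formulas, and ordering the $\kappa_j$ is achieved by a permutation matrix absorbed into $a_\pm$.
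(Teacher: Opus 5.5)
The paper gives no proof of this statement. It is quoted from \cite{BS90} (Theorems 6.5 and 6.6), so your argument has to stand on its own.

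Steps 1, 2 and 4 are sound. In Step 2 you should add that $\|v-1\|<1$ and $\|(1+\varepsilon)v-1\|<1$ for small $\varepsilon$, since that is what makes $u_\pm$ invertible in $W_m^\pm$. The splitting-off of the zeros of a matrix polynomial outside $\overline{\mathbb{D}}$ is also fine.

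The genuine gap is the last move of Step 3, which is not routine bookkeeping. You cannot pass from a factorization $r=r_-Dr_+$ of the approximating Laurent polynomial to a factorization of $a=r(1+\varepsilon)$ with the \emph{same} $D$.
\begin{itemize}
\item Commuting $u_-$ past $D$ is not available, because $(Du_-D^{-1})_{ij}=z^{\kappa_i-\kappa_j}(u_-)_{ij}$ in general leaves $W^-$ when $\kappa_i>\kappa_j$.
\item You cannot make $r_+\varepsilon r_+^{-1}$ small by ``choosing $\varepsilon$ smaller''. Doing so means choosing another $r$, and its factor $r_+$ can have unbounded norm.
\item More fundamentally, partial indices are not stable under small perturbations, so no argument that keeps the $D$ of $r$ can work.
\end{itemize}
For a concrete counterexample, take $a=\mathrm{diag}(z,z^{-1})$ and $r_\delta=\begin{pmatrix} z&\delta\\ 0&z^{-1}\end{pmatrix}$. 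Then $r_\delta^{-1}a=1+\varepsilon$ with $\varepsilon=\begin{pmatrix}0&-\delta z^{-1}\\ 0&0\end{pmatrix}$ and $\|\varepsilon\|=|\delta|$. Moreover $r_\delta=\begin{pmatrix}0&\delta\\ -\delta^{-1}&z^{-1}\end{pmatrix}\begin{pmatrix}1&0\\ \delta^{-1}z&1\end{pmatrix}$ is a canonical factorization ($D=1$). On the other hand, $T(a)=T(z)\oplus T(z^{-1})$ has one-dimensional kernel and cokernel, so $a$ has indices $(-1,1)$ and admits no canonical factorization. Consistently with this, $r_+\varepsilon r_+^{-1}=\begin{pmatrix}1&-\delta z^{-1}\\ \delta^{-1}z&-1\end{pmatrix}$ is never small.

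The repair is never to factor $r$ on its own, but to absorb the wrong-sided factor from Step 2 into the polynomial part before factoring.
\begin{enumerate}
\item Put the perturbation on the left: $a=(1+\varepsilon)r=u_-u_+r=u_-\,z^{-N}F$, where $p=z^Nr$ is a matrix polynomial and $F:=u_+p\in W_m^+$.
\item Since $\det u_+$ has no zeros on $\overline{\mathbb{D}}$, $\det F$ has finitely many zeros $\alpha_1,\dots,\alpha_k$ in $\mathbb{D}$ (counted with multiplicity) and none on $S^1$. The zeros of $\det p$ outside $\overline{\mathbb{D}}$ play no role.
\item Your left null-vector step works inside $W_m^+$, because $f\in W^+$, $f(\alpha)=0$, $|\alpha|<1$ imply $f/(z-\alpha)\in W^+$. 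Iterating gives $F=PF_+$ with $F_+\in G(W_m^+)$ and
\[
P=C_1^{-1}\mathrm{diag}(z-\alpha_1,1,\dots,1)\cdots C_k^{-1}\mathrm{diag}(z-\alpha_k,1,\dots,1),
\]
a matrix polynomial with $\det P=c\prod_i(z-\alpha_i)$.
\item Column reduction gives $PU=Q(z^{-1})\,\mathrm{diag}(z^{k_j})$ with $U\in GL_m(\mathbb{C}[z])$ and $Q(0)$ invertible.
\end{enumerate}
Contrary to what you state, invertibility of $Q(z^{-1})$ in $W_m^-$ does not follow from the invertibility of $Q(0)$ alone. It follows from $\det Q(w)=c'\,w^{\sum_j k_j}\det P(1/w)=c''\,w^{\sum_j k_j-k}\prod_i(1-\alpha_iw)$ together with $\det Q(0)\neq 0$. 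This forces $\sum_j k_j=k$ and leaves no zeros in $|w|\le 1$. You then get $a_-=u_-Q(z^{-1})$, $\kappa_j=k_j-N$ and $a_+=U^{-1}F_+$, and your Step 4 applies.

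A smaller point: in your treatment of inside zeros, pulling out $C^{-1}\mathrm{diag}(1-\alpha z^{-1},1,\dots,1)\,\mathrm{diag}(z,1,\dots,1)$ one zero at a time interleaves constant matrices with powers of $z$. What remains is then a polynomial with determinant $cz^k$, not a unimodular matrix times column powers of $z$. The scheme above avoids this.
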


\subsection{Path components of non-singular hermitian matrix-valued functions on $S^1$}

We now consider the $K_0$-invariants $\V_0(\FR{n},m)$ for the Fej\'er--Riesz operator system. From the above it follows that we should consider the number of path-components of the space of invertible hermitian matrix-valued continuous functions on $S^1$ with Fourier support in $(-n,n)$. Note that the space of such hermitian forms can be described as
$$
H(\FR{n},m) := \{ a \in M_m( \FR{n} ) \mid a=a^*, \det a(z) \neq 0 \text{ for all }z \in S^1 \}.
$$ 
  Note that to any $a \in H(\FR{n},m)$ and any $z \in S^1$ we can assign a signature to the matrix $a(z)$; since the signature is a continuous map from $S^1$ to $\Z$, it follows that it is a constant on $S^1$. Hence it makes sense to talk about the signature of $a \in H(\FR{n},m)$ and we denote by $H(\FR{n},m)_{p,q}$ those elements in $H(\FR{n},m)$ with signature $(p,q)$ where $p+q = m$.

  \begin{thm}
    \label{thm:FR}
  \begin{enumerate}
\item     The space $H(\FR{n},m)_{p,q}$ of invertible hermitian matrix-valued functions on $S^1$ with signature $(p,q)$ is path-connected.
    \item The space $H(\FR{n},m)$ of invertible hermitian matrix-valued functions on $S^1$ has $m+1$ path-connected components, which are labeled by matrix signatures. Consequently, $\V_0(\FR{n},m) \cong \{ -m, -m+2, \ldots, m\}$.   
    \end{enumerate}
    \end{thm}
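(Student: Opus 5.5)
Since the signature is constant along paths, part (2) follows from part (1) once every component $H(\FR{n},m)_{p,q}$ is shown to be non-empty. The constant functions $\I_p\oplus -\I_q$ already provide such elements, since constants lie in $\FR{n}$ for every $n\geq 2$. So the work is in (1). The plan is to connect an arbitrary $a\in H(\FR{n},m)_{p,q}$ to the constant $\Lambda=\I_p\oplus-\I_q$ through invertible hermitian Laurent polynomials whose Fourier support stays in $(-n,n)$.

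The first step is a straight-line deformation that requires no factorization. For fixed $a$, consider $a_t = (1-t)a + t\,c$ with $c$ a constant hermitian matrix. This stays in $M_m(\FR{n})$ and is hermitian, so only invertibility must be preserved. In the definite case $q=0$ this is immediate: $a(z)>0$ for all $z$, so $(1-t)a+t\I>0$. Hence $H(\FR{n},m)_{m,0}$, and symmetrically $H(\FR{n},m)_{0,m}$, are convex cones and so path-connected.

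For indefinite signature I would use a spectral splitting. Since $a(z)$ is invertible and hermitian for each $z$, the spectral projection $P_+(z)$ onto the positive eigenspace is a continuous rank-$p$ projection-valued function on $S^1$, given by the Riesz integral. It is, however, not a Laurent polynomial of low degree, so I cannot deform inside the Fourier constraint along it directly. Instead I would try a Wiener--Hopf type reduction. By the Gohberg--Krein theorem, or its hermitian version (the factorization of hermitian matrix functions, $a = a_+^{*}\,\Sigma(z)\,a_+$ with a signature/index middle factor), one expects for polynomial symbols of degree $<n$ a factorization $a(z)=b(z)^*\,D\,b(z)$. Here $D$ is a constant signature-type matrix, possibly with blocks of the form $\begin{pmatrix}0& z^{\kappa}\\ z^{-\kappa}&0\end{pmatrix}$, and $b$ is an analytic matrix polynomial of degree $\leq n-1$ that is invertible on the closed disc, or at least on $S^1$. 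Granting this, I would deform $b$ via $b_s(z)=b(sz)$, $s\in[0,1]$. This preserves the degree bound, hence the Fourier support of $b_s^*Db_s$, and keeps invertibility on $S^1$ as long as $\det b$ has no zeros in the closed disc. At $s=0$ one reaches $b(0)^*Db(0)$, a constant hermitian invertible matrix of signature $(p,q)$, and by Example~\ref{ex:V-C} (connectedness of $GL_m(\C)$) this joins $\Lambda$ through constants.

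The main obstacle is producing the factorization with all the needed properties at once. It must have a constant middle factor, meaning the partial indices vanish or the hyperbolic $z^{\pm\kappa}$ blocks can be handled, and the outer factor $b$ must be a polynomial of degree $\leq n-1$ with $\det b$ zero-free on the closed disc. The index blocks look like the real difficulty. I would first try to kill them by a degree-preserving homotopy: a block $\begin{pmatrix}0&z^\kappa\\ z^{-\kappa}&0\end{pmatrix}$ has signature $(1,1)$ pointwise and can be rotated via $\begin{pmatrix}\cos\theta\,\ldots\end{pmatrix}$-type paths to $\mathrm{diag}(1,-1)$ only if the Fourier degrees stay bounded. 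If that fails, the fallback is a dimension argument in the style of Section~\ref{sect:Toep}. The plan there is to show that the complement of $H(\FR{n},m)$ within the real vector space of hermitian elements of $M_m(\FR{n})$ is given locally by the real-codimension-one condition $\det a(z)=0$ for some $z$, and that walls between same-signature regions can be crossed by crossing twice. That route needs an explicit local analysis, so I would prefer the factorization approach first.
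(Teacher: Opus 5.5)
\textbf{There is a genuine gap: you never obtain a constant middle factor, which is the step the whole argument depends on.}

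Your reduction of (2) to (1) is fine, and so is the convexity argument for definite signature. The endgame $b_s(z)=b(sz)$ is also fine, and it is exactly how the paper concludes.

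The gap is the step you flag yourself. For a general $a\in H(\FR{n},m)_{p,q}$ the partial indices need not vanish. An example is $a(z)=\begin{pmatrix}0&z\\ \bar z&0\end{pmatrix}$, whose block Toeplitz operator has a one-dimensional kernel. With a non-constant middle factor $D(z)$, your contraction step breaks down:
\begin{itemize}
\item comparing supports in $(b^{-1})^*a=Db$ no longer bounds the degree of $b$ by $n-1$;
\item rescaling $z\mapsto sz$ destroys the cancellations that kept $b^*Db$ supported in $(-n,n)$;
\item $b(0)^*D(z)b(0)$ is not constant.
\end{itemize}
Neither remedy you offer is carried out. The rotation of the hyperbolic blocks is left unspecified. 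The fallback is not a viable strategy either: the set where $\det a(z)=0$ for some $z$ has real codimension one among hermitian elements. A path that crosses it leaves $H(\FR{n},m)$, and there is no codimension-two slack to exploit.

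The missing idea is that you need not handle the index blocks at all, because they can be perturbed away. By Gohberg's theorem $T(a)$ is Fredholm, and it is self-adjoint since $a_k^*=a_{-k}$. So $0$ is at most an isolated eigenvalue of finite multiplicity, and $T(a+\epsilon\I_m)=T(a)+\epsilon$ is invertible for all sufficiently small $\epsilon>0$. If $\epsilon$ is also below $\|a^{-1}\|^{-1}$, the segment $a+t\epsilon\I_m$, $t\in[0,1]$, stays in $H(\FR{n},m)_{p,q}$, since constants lie in $\FR{n}$. In the example above, $T(a)^3=T(a)$, so $T(a)+\epsilon$ is invertible and the indices vanish for every $0<\epsilon<1$.

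Once $T(a)$ is invertible, the Gohberg--Krein formulas for $\dim\ker T(a)$ and $\dim\coker T(a)$ force all $\kappa_j=0$, so $a=a_-a_+$. Hermiticity then gives $S:=(a_+^*)^{-1}a_-=a_-^*a_+^{-1}$. Its Fourier support lies in both $(-\infty,0]$ and $[0,\infty)$, so $S$ is a constant invertible hermitian matrix of signature $(p,q)$, and $a=b^*Sb$ with $b=a_+$. The identity $(b^{-1})^*a=Sb$ shows that $b$ has Fourier support in $[0,n)$, and $\det b\neq0$ on $\overline{\D}$. Only at this point does your deformation $b(sz)$ legitimately apply.
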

As before, we conclude that the inclusion map $\imath_\env^{\FR{n}} : \FR{n} \to C^*_\env(\FR{n}) \cong C(S^1)$ induces isomorphisms $\V_0(\FR{n},m) \cong \V_0(C(S^1), m)$ for any $m \geq 1$, intertwining the connecting maps $\imath_{m m'}$ from Equation \eqref{eq:dir-syst} for $E= \FR{n}$ with those of $E = C(S^1)$. It follows that  $K_0(\FR{n}) \cong K_0 (C(S^1) ) \cong \Z$.

Before formulating the proof of the above Theorem, let us start with some preliminary results. 
\begin{lma}
  \label{lma:FR-aux1}
  For any $a \in H(\FR{n},m)$ there exists $\epsilon_0>0$ such that for all $0< \epsilon < \epsilon_0$ we have
  \begin{enumerate}
\item $a_t = a+t \epsilon \I_m \in  H(\FR{n},m)$ for all $t\in [0,1]$.
\item The block Toeplitz operator $T(a+ \epsilon \I_m)$ is a self-adjoint Fredholm operator which is invertible.
  \end{enumerate}
\end{lma}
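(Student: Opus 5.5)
The plan is to rely on compactness of $S^1$ and continuity of the eigenvalues of $a(z)$. Since $a$ is hermitian and $\det a(z)\neq 0$ for all $z \in S^1$, the function $z \mapsto \min\{|\mu| : \mu \text{ an eigenvalue of } a(z)\} = \|a(z)^{-1}\|^{-1}$ is continuous and strictly positive on the compact set $S^1$. It therefore attains a positive minimum $\delta>0$. I would take $\epsilon_0 = \delta$; if needed, I would shrink it further so that it also satisfies the requirement in (2).

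For (1), I would note that $a + t\epsilon \I_m$ still has Fourier support in $(-n,n)$, because $\I_m$ is the constant ($k=0$) coefficient. It is also clearly hermitian. For each $z$, the eigenvalues of $a(z)+t\epsilon\I_m$ are $\mu + t\epsilon$, where $\mu$ runs over the eigenvalues of $a(z)$. Since $|\mu| \geq \delta > \epsilon \geq t\epsilon$, each shifted eigenvalue is nonzero and has the same sign as $\mu$. Hence $\det a_t(z) \neq 0$ for all $z$ and $t$, so the whole segment lies in $H(\FR{n},m)$, and the signature is preserved along it.

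For (2), I would first argue the Fredholm property. The sequence $(a_k)$ is finitely supported, hence in $\ell^1$, and $b := a+\epsilon\I_m$ has $\det b(z)\neq0$ on $S^1$. Gohberg's theorem then makes $T(b)$ Fredholm, and $T(b)^* = T(b^*) = T(b)$ shows it is self-adjoint. A self-adjoint Fredholm operator has index $0$ and $\ker T(b) = \coker T(b)$, so invertibility reduces to showing $\ker T(b) = 0$.

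This kernel statement is the main obstacle, since $\ker T(a)$ need not vanish: a partial index $\kappa_j \neq 0$ gives a kernel. My approach is to use that $\ker T(a)$ is finite-dimensional, because $T(a)$ is Fredholm. Take the spectrum of the self-adjoint operator $T(a)$ near $0$. The essential spectrum of $T(a)$ is contained in the set of eigenvalues of $a(z)$, $z \in S^1$ (by Gohberg applied to $a - \lambda\I_m$), so it avoids $(-\delta,\delta)$. Consequently, $\sigma(T(a)) \cap (-\delta,\delta)$ consists of finitely many isolated eigenvalues of finite multiplicity. I would choose $\epsilon_0 \leq \delta$ smaller than the smallest nonzero $|\lambda|$ among these eigenvalues. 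Then for $0<\epsilon<\epsilon_0$, the operator $T(b) = T(a)+\epsilon$ has spectrum shifted by $\epsilon$. Zero is not in the shifted spectrum: no eigenvalue $\lambda$ of $T(a)$ in $(-\delta,\delta)$ equals $-\epsilon$, because $\lambda$ is either $0$ or satisfies $|\lambda| > \epsilon$, and the remaining spectrum stays outside $(-\delta+\epsilon,\delta+\epsilon) \ni 0$. Hence $T(b)$ is invertible. This argument uses only Gohberg's theorem and the spectral theorem; the Gohberg--Krein factorization is not needed here.
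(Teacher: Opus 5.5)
Your proposal is correct and follows essentially the same route as the paper: for (1) a uniform bound $\|a(z)^{-1}\|\le 1/\delta$ on $S^1$, and for (2) Gohberg's theorem plus the fact that $0$ is at most an isolated eigenvalue of finite multiplicity of the self-adjoint Fredholm operator $T(a)$, so that $\epsilon_0$ is shrunk below $\mathrm{dist}(0,\sigma(T(a))\setminus\{0\})$. One small imprecision: finiteness of $\sigma(T(a))\cap(-\delta,\delta)$ does not follow from your argument, since discrete eigenvalues could a priori accumulate at $\pm\delta$. This does not matter, because all you use is that $0$ is not an accumulation point of $\sigma(T(a))$, which holds since $0\notin\sigma_{\mathrm{ess}}(T(a))$.
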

\proof
(1) Since $a$ is invertible, $\| a^{-1} \| \leq C$ for some $C>0$. But then $a(z) + s \I_m$ is invertible provided $|s| < 1/C$; take $\epsilon_0 = 1/C$.

(2)
It follows from the Gohberg Theorem that $T(a)$ is Fredholm. Since hermiticity of $a$ amounts to $a_k^* = a_{-k}$ it is clear from \eqref{eq:T-a} that $T(a)$ is self-adjoint. Since self-adjoint Fredholm operators can only have $0$ in its spectrum $\sigma(T(a))$ as an eigenvalue of finite multiplicity, we may perturb $T(a)$ to an invertible Fredholm operator $T(a+\epsilon \I_m) = T(a) + \epsilon 1$ provided $0< \epsilon<\epsilon_0$ and we shrink $\epsilon_0$ below $\text{dist}(0,\sigma(T(a)) \setminus \{0\} )$.
\endproof

If we apply Wiener--Hopf factorization to an element $a \in H(\FR{n},m)$ with $T(a)$ invertible, we find 
$$
a(z) = a_-(z) a_+(z); \qquad (z \in S^1),
$$
where $a_+$ and $a_-$ are invertible functions which are holomorphic on $\overline \D$ and $\C \setminus \D$, respectively. We now have to establish a form of this factorization for which these functions are actually polynomials (in $z$ and $1/z$, respectively).  

Recall the involution on holomorphic functions: $a^*(z) = a(1/\overline z)^*$, or,  if $a(z) = \sum_j a_j z^j $ then
$$
 a^*(z) = \sum_j a_j ^*z^{-j}.
$$

 \begin{lma}
   \label{lma:FR-aux2}
  Let $a \in H(\FR{n},m)_{p,q}$ be such that $T(a)$ is invertible. Then there exists $b \in M_m(C(S^1))$ so that the function
  $$
b(z) = \sum_{j=0}^{n-1} b_j z^j
  $$
has $\det b(z) \neq 0$ for all $|z| \leq 1$, and for which 
$$
a(z) = b(z)^* \cdot S \cdot b(z)
$$
where $S$ is a constant hermitian invertible matrix with signature $(p,q)$.  
\end{lma}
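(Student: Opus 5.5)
We construct $b$ by combining the Wiener--Hopf factorization with a degree argument, modelled on the classical proof of the Fej\'er--Riesz theorem. Since $T(a)$ is invertible, all partial indices $\kappa_j$ in the Gohberg--Krein Theorem vanish: the kernel and cokernel are both trivial, so $\sum_{\kappa_j<0}|\kappa_j| = 0 = \sum_{\kappa_j>0}\kappa_j$. This gives a canonical factorization $a = a_- a_+$, with $a_+$ invertible and holomorphic on $\overline{\D}$, and $a_-$ invertible and holomorphic on $\C\setminus \D$ (including $\infty$).

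The first step is to symmetrize this factorization. From $a=a^*$ we get $a = a_+^* a_-^*$, where $a_+^*$ is of minus type and $a_-^*$ is of plus type. Canonical factorizations are unique up to a constant invertible matrix: $a_-^{-1}a_+^* = a_+ (a_-^*)^{-1}$ is holomorphic and invertible both inside and outside the disc, hence constant by Liouville. Thus $a_+^* = a_- C$, i.e.\ $a_- = a_+^* C^{-1}$, so $a = a_+^* S\, a_+$ with $S = C^{-1}$. Taking adjoints and using uniqueness of the plus factor up to constants shows $S = S^*$. Since $a(1) = a_+(1)^* S a_+(1)$, Sylvester's law gives that $S$ has the signature $(p,q)$ of $a(1)$. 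Set $b := a_+$, which is invertible on $\overline{\D}$.

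The main step is to show that $b$ is a polynomial of degree $\le n-1$. Write $b^*(z) S = a(z) b(z)^{-1}$. The left side is holomorphic on $\C\setminus\D$ including $\infty$. The right side is holomorphic on $\D$, because $a(z) = \sum_{|k|<n} a_k z^k$ is a Laurent polynomial and $b^{-1}$ is holomorphic on $\overline{\D}$. The difficulty is the pole of order up to $n-1$ at $0$ coming from $a$, so we multiply by $z^{n-1}$. Then $z^{n-1} b^*(z) S = z^{n-1}a(z)\, b(z)^{-1}$ is holomorphic on all of $\D$ and on $\C\setminus \D$, with a pole of order at most $n-1$ at $\infty$ (since $b^*$ is bounded there). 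These two descriptions agree on $S^1$, so by Morera or Painlev\'e gluing they define an entire function of polynomial growth, hence a polynomial of degree $\le n-1$. Therefore $b^*(z)$ is a polynomial in $z^{-1}$ of degree $\le n-1$, and so $b(z) = \sum_{j=0}^{n-1} b_j z^j$.

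Finally, $\det b(z)\ne 0$ for $|z|\le 1$ holds because $a_+$ is invertible on $\overline{\D}$, and continuity of all the objects is automatic. The main obstacle is the bookkeeping in the uniqueness and symmetrization step, namely showing that $S$ can be taken hermitian. If this causes trouble, an alternative is to first factor $a_+(1)$ out to normalize $a_+(1)=\I_m$ and then compare $a$ with $a^*$ directly.
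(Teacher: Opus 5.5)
Your proposal is correct and follows essentially the same route as the paper: you take the canonical Wiener--Hopf factorization (all partial indices vanish), force the middle factor $S$ to be constant by comparing plus and minus types, and bound the degree of $b$ using $ab^{-1}=b^*S$. The paper uses the mirror identity $(b^*)^{-1}a=Sb$ and compares Fourier supports where you glue across $S^1$ and apply Liouville, which is the same argument in analytic language; also, the hermiticity of $S$ that you flagged as the main obstacle is immediate from $b^*Sb=a=a^*=b^*S^*b$, since $b$ is invertible on $S^1$.
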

\proof
Since $a = a_- a_+$ and $a=a^*$ it follows that $a_- a_+ = a_+^* a_-^*$. Let us define
$$
S:= (a_+^*)^{-1} a_- = a_-^* a_+^{-1}.
$$
Then $(a_+^*)^{-1} a_-$ has Fourier support in $(-\infty,0]$ while $a_-^* a_+^{-1}$ has support in $[0,\infty)$. This implies that $S$ is constant. Moreover, from the equation it also follows that $S$ is invertible. 

Now write $b = a_+$ so that $a_- = b^*\cdot S$ and also $a= b^*\cdot S \cdot b$. Since $a(z)$ is hermitian on the circle, it follows that $b^*(z) S b(z)$ is hermitian on the circle, hence $S$ is self-adjoint. 

The degree of $b$ can be computed by multiplying $a = b^* \cdot S \cdot b$ on the left by $(b^{-1})^*$:
$$
(b^{-1})^*\cdot  a = S \cdot b.
$$
Since the support of $(b^{-1})^*$ lies in $(-\infty,0]$ while the support of $a$ is contained in $(-n,n)$ we find that their product $(b^{-1})^* \cdot a$ is supported in $(-\infty,n)$. The right-hand side $S\cdot b$ is supported in $[0,\infty)$; hence $b$ is supported in $[0,n)$.

 For the zeros of $\det b(z)$ we already know that $\det b(z) \neq 0$ for all $z \in \D$. Moreover, $\det b(z) \neq 0$ on $S^1$ because $a(z) = b^*(z) S b(z)$ is pointwise invertible. It thus follows that $\det b(z) \neq 0$ on $\overline \D$. 
      
 Finally, that the signature of $S$ is equal to that of $a(z)$ follows by Sylvester's Law.
\endproof

\begin{prop}
  \label{prop:FR-aux}
  Let $a \in H(\FR{n},m)$ be such that $T(a)$ is invertible. Then $a$ can be joined inside $H(\FR{n},m)$ to a constant hermitian invertible matrix.
\end{prop}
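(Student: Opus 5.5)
We apply Lemma \ref{lma:FR-aux2} to write
$$
a(z) = b(z)^* \cdot S \cdot b(z), \qquad b(z) = \sum_{j=0}^{n-1} b_j z^j,
$$
with $\det b(z) \neq 0$ for all $|z| \leq 1$ and $S$ a constant hermitian invertible matrix of the same signature as $a$. The idea is to contract the polynomial $b$ radially. Set
$$
b_s(z) := b(sz) = \sum_{j=0}^{n-1} b_j s^j z^j, \qquad s \in [0,1],
$$
and $a_s := b_s^* \cdot S \cdot b_s$. Then $a_1 = a$, and $a_0 = b_0^* S b_0$ is a constant matrix.

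First we check that the path stays in $M_m(\FR{n})$. Since $b_s$ has Fourier support in $[0,n)$ and $b_s^*$ has support in $(-n,0]$, the product $a_s$ has support in $(-n,n)$. On $S^1$ we have $b_s^*(z) = b_s(z)^*$, so $a_s(z) = b_s(z)^* S b_s(z)$ is hermitian.

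Next we check non-singularity. For $z \in S^1$ and $s \in [0,1]$ we have $|sz| \leq 1$, so $\det b_s(z) = \det b(sz) \neq 0$. Hence $a_s(z)$ is invertible. At $s=0$ this says $b_0 = b(0)$ is invertible, so $a_0 = b(0)^* S b(0)$ is a constant hermitian invertible matrix. By Sylvester's Law its signature is that of $S$.

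Finally, continuity of $s \mapsto a_s$ in $M_m(C(S^1))$ is clear, since the Fourier coefficients are polynomial in $s$. This gives the required path inside $H(\FR{n},m)$.

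The only delicate point is that invertibility must hold on the closed disc and not merely on the circle. Lemma \ref{lma:FR-aux2} provides exactly this, so the argument has no real obstacle beyond that lemma itself, which is where the invertibility of $T(a)$ enters through Wiener--Hopf factorization with vanishing partial indices.
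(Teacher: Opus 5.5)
Your proposal is correct and follows essentially the same route as the paper: factor $a = b^* S b$ via Lemma \ref{lma:FR-aux2}, then contract radially with $b_t(z) = b(tz)$, which stays invertible on $S^1$ because $\det b \neq 0$ on the closed disc, and end at the constant matrix $b_0^* S b_0$. You also check explicitly the Fourier support in $(-n,n)$, hermiticity and continuity of the path, which the paper leaves implicit in its formula $a_t(z) = \sum_{j,k} t^{j+k} b_j^* S b_k z^{k-j}$.
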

\proof
Write $a = b^* \cdot S \cdot b$, set $b_t(z) = b(tz)$ and write $a_t(z) = b_t^*(z) \cdot S \cdot b_t(z)$. Then we have 
$$
a_t(z)  =  \sum_{j,k=0}^{n-1} t^{j+k} b_j^* \cdot S \cdot b_k z^{k-j} .
$$
As a matrix-valued polynomial this is continuous in $t$ and since $|tz| \leq 1$ and $\det b(z) \neq 0$ on $\overline \D$ we find that $b(tz)$ is invertible. Hence $a_t(z)$ is invertible and with constant signature equal to that of $S$. The endpoints are $a_1(z) = a(z)$ and $a_0(z) = b_0^* S b_0$, as desired. 
\endproof

\proof[Proof of Theorem \ref{thm:FR}]
(1) We claim that any $a \in H(\FR{n},m)_{p,q}$ can be joined by a continuous path to the constant matrix $S = \I_p \oplus - \I_q$. First, by Lemma \ref{lma:FR-aux1} there exists a path in $H(\FR{n},m)$ of constant signature from $a$ to $a'$ for which the block Toeplitz operator $T(a')$ is self-adjoint Fredholm and invertible. Then Proposition \ref{prop:FR-aux} applies and shows that $a'$ can be joined inside $H(\FR{n},m)_{p,q}$ to a constant invertible hermitian $m \times m$ matrix of signature $p,q$. But we already know from Example \ref{ex:V-C} that the space of the latter such matrices is path-connected.

(2) The above shows that $\V_0(C(S^1)_{(n)},m) \cong \V_0(\C,m)$ so that the result follows from \eqref{eq:V-C-n} in Example \ref{ex:V-C}.
  \endproof

\subsection{Path components of non-singular matrix-valued functions on $S^1$}
Continuing with the Fej\'er--Riesz operator system, we now consider the $K_1$-invariant $\V_1(\FR{n},m)$. Note that since the $C^*$-envelope of this system is infinite-dimensional, Proposition \ref{prop:K1} does not apply. 

The $K_1$-invariants are based on the space of all invertible elements in $M_m(\FR{n})$, which is
$$
G(\FR{n},m ):= \left\{ a \in M_m(\FR{n}) \mid \det a(z) \neq 0 \text{ for all } z \in \S^1 \right\}.
$$
Note that to an invertible $a(z)$ ($z \in S^1$), we may associate a determinant winding number to it, denoted by $\textup{wind}_{S^1}(\det a(z))$. In fact, it will turn out that this completely determines the path components of $G(\FR{n},m )$.

\begin{thm}
  \label{thm:FR-K1}
  The determinant winding number induces an isomorphism
\begin{align*}
  \V_1 ( \FR{n},m) &\to \{ -m(n-1), -m(n-1)+1, \ldots, m(n-1) \};\\
    [a] &\mapsto \textup{wind}_{S^1}(\det a(z)).
  \end{align*}
\end{thm}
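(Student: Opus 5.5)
Three things have to be shown: the winding number is a well-defined invariant on $\V_1(\FR{n},m)$, every value in $\{-m(n-1),\ldots,m(n-1)\}$ occurs, and two elements with the same winding number lie in the same path component of $G(\FR{n},m)$.

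\emph{Invariance and range.} Along a path in $G(\FR{n},m)$ the function $\det a(z)$ moves continuously among nonvanishing functions on $S^1$, so its winding number is constant. For the range, write $a(z)=z^{-(n-1)}c(z)$ with $c(z)=\sum_{k=0}^{2n-2}a_{k-n+1}z^k$ a matrix polynomial of degree at most $2(n-1)$. Then
\[
\det a(z)=z^{-m(n-1)}\det c(z),
\]
and $\det c$ is a scalar polynomial of degree at most $2m(n-1)$ with no zeros on $S^1$. By the argument principle its winding number $N$ equals the number of its zeros in $\D$, so $0\le N\le 2m(n-1)$. Hence $\textup{wind}(\det a)=N-m(n-1)$ lies in the stated range. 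Every value is attained by diagonal matrices: for $k\in\{-(n-1),\ldots,n-1\}$ the entry $z^k$ lies in $\FR{n}$, and taking $\mathrm{diag}(z^{k_1},\ldots,z^{k_m})$ with $|k_i|\le n-1$ realizes every total $\sum k_i$ between $-m(n-1)$ and $m(n-1)$.

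\emph{Injectivity.} This is the main step: two elements with the same winding number must be connected inside $G(\FR{n},m)$. The substitution $a\mapsto c=z^{n-1}a$ identifies $G(\FR{n},m)$ with the space $\mathcal P$ of matrix polynomials of degree $\le 2n-2$ whose determinant has no zeros on $S^1$. I would show that $\mathcal P_N=\{c\in\mathcal P:\det c\text{ has exactly }N\text{ zeros in }\D\}$ is path connected for each $N$. Consider the larger space $\mathcal Q_N$ of all matrix polynomials of degree $\le 2n-2$ with $\det c$ not identically zero, having no zeros on $S^1$ and exactly $N$ zeros in $\D$. Here roots may escape to infinity, which corresponds to a drop in the degree of $\det c$. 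I expect this to be harmless, because such roots lie outside $\overline\D$ and the coefficient space is a complex vector space.

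The plan for connecting points of $\mathcal P_N$ is to use genericity. The bad set consists of those $c$ for which $\det c$ vanishes somewhere on $S^1$. For a fixed $z$, the condition $\det c(z)=0$ is one complex condition, so the bad set is the image of a real hypersurface swept over the one-parameter family $z\in S^1$, and it has real codimension $1$. Codimension $1$ means it can separate the space, so genericity alone does not give connectedness. Instead, the components should be detected exactly by the zero count $N$. I would argue this through a normal form. Using a Smith-form style reduction by elementary polynomial row and column operations, connected to the identity via $GL_m(\C[z])$ paths, reduce $c$ to
\[
\mathrm{diag}\bigl(1,\ldots,1,\det c\bigr),
\]
and then move the zeros of $\det c$ continuously: zeros inside $\D$ to $0$, zeros outside $\overline\D$ to $\infty$, that is, let the leading coefficients shrink.

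The obstacle is to keep the degree bound $2n-2$ throughout. Elementary operations raise degrees, and the diagonal normal form $\mathrm{diag}(1,\ldots,1,\det c)$ can have degree up to $2m(n-1)$, well above $2n-2$. So the normal form should instead distribute the zeros as $\mathrm{diag}(z^{j_1},\ldots,z^{j_m})$ with each $j_i\le 2n-2$. To reach it while respecting the degree bound, I would first try the following route. First deform $c$ to be generic, with $\det c$ having simple zeros. Then factor zeros one at a time: at a zero $\alpha$ with kernel vector $v$, write
\[
c(z)=\tilde c(z)\,\bigl(I+(\tfrac{z-\alpha}{1}-1)P_v\bigr),
\]
where $P_v$ is the projection onto $v$, so that $\tilde c$ has one degree less in that direction. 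The zero $\alpha$ can then be slid radially to $0$ or $\infty$ without crossing $S^1$. Controlling the column degrees of $\tilde c$ during this factorization, via column-reduced (Popov) forms, is where I expect the real difficulty to lie.
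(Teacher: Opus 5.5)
\textbf{There is a genuine gap: the injectivity step is not proved.} Your invariance and range arguments are correct and match the paper's. But injectivity is the real content of the theorem, and there you drop the Smith-form route, sketch a zero-by-zero factorization, and leave it unfinished.

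Part of what you fear is actually harmless. If at each step you refactor the \emph{current} polynomial, take $c(\alpha)v=0$ and set $Q:=cP_v/(z-\alpha)$, a polynomial of degree $\le 2n-3$. Then $c_t=c(\I_m-P_v)+\phi_t(z)\,Q$ with $\phi_t$ affine has degree $\le 2n-2$ and $\det c_t=\phi_t\cdot\det c/(z-\alpha)$. Taking $\phi_t=z-\alpha_t$ slides a zero in $\D$ to $0$, and $\phi_s=sz-\alpha$ pushes a zero outside $\overline\D$ to infinity. So the degree bound survives this stage.

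However, this only reduces you to matrix polynomials of degree $\le 2n-2$ with $\det c=\kappa z^N$. You give no argument that all of these lie in one path component of $\mathcal P_N$. Neither of the following is addressed:
\begin{itemize}
\item reducing such a $c$ to some $\mathrm{diag}(z^{j_1},\ldots,z^{j_m})$ within the degree bound;
\item connecting two diagonal forms with the same $\sum j_i$. Already for $n=m=2$ you must join $\mathrm{diag}(z^2,1)$ to $\mathrm{diag}(z,z)$, i.e.\ $\mathrm{diag}(z,z^{-1})$ to $\I_2$ in $G(\FR{2},2)$.
\end{itemize}
Arguments valid in $C(S^1)$ ignore the degree bound, so this cannot be waved through.

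\textbf{The missing idea is to drop normal forms and work in root coordinates after a generic normalization.} Write $d=2n-2$.
\begin{itemize}
\item $\mathcal P_N$ is open in coefficient space, and $(\det q_d)\cdot\text{Disc}_z(\det q(z))$ is a nonzero polynomial in the coefficients. Hence every $c\in\mathcal P_N$ is joined by a straight segment to some $q$ with invertible top coefficient and simple determinant roots.
\item A path in $GL_m(\C)$ from $\I_m$ to $q_d^{-1}$ then makes $q$ monic. Monicity fixes $\deg\det q=md$, so no zero can escape to infinity.
\item Such a monic $q$ is recovered from its roots $\lambda_i$ and kernel vectors $v_i$ via $(q_0\;\cdots\;q_{d-1})=-(\lambda_1^dv_1\;\cdots\;\lambda_{md}^dv_{md})V^{-1}$. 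Here $V$ is the $md\times md$ matrix with columns $(v_i,\lambda_iv_i,\ldots,\lambda_i^{d-1}v_i)^t$.
\item Every such $q$ arises this way, because these columns are eigenvectors of the block companion matrix for distinct eigenvalues.
\item The parameter space is $\D^N\times(\C\setminus\overline\D)^{md-N}\times(\C^m)^{md}$ minus the zero set of $\det V\cdot\prod_{i<j}(\lambda_j-\lambda_i)$. It is the complement of a complex hypersurface in a connected domain, hence connected. Its continuous image is the whole set of such monic $q$.
\end{itemize}
This is exactly how the real-codimension-one wall you correctly identified is defused. In root coordinates, ``no zero on $S^1$'' is built into the product domain, whose boundary is never crossed. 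The remaining degeneracies have real codimension two, so no normal form or degree bookkeeping is needed.
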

This should be contrasted to $\V_1(C(S^1),m)$ where all winding numbers may occur; in fact, $\V_1(C(S^1),m) \cong \Z$.

Rather than the matrix-valued Laurent polynomials $a(z)$ we formulate our proof in terms of matrix polynomials $p(z)$ of degree $d := 2n-2$, defined by
$$
p(z) = z^{n-1} a(z) = \sum_{j=0}^d p_j z^j; \qquad p_j = a_{j-n+1}\in M_m(\C).
$$
Indeed, $p(z)$ is invertible on $S^1$ iff $a(z)$ is. Denote by $r(p)$ the number of zeros of $P$ on the open disc $\D$, counting multiplicities. By the argument principle,
$$
\textup{wind}_{S^1}(\det a(z)) = r(p) - (n-1) m.
$$
Since winding number is constant along paths, we need to prove that the following sets are path connected:
$$
\mathcal P_r := \{ p(z) \mid \deg p \leq d, \det p(z) \neq 0 \text{ for all } z \in S^1, r(p) = r\}.
$$

\begin{lma}
  \label{lma:FR-K1}
Any $p \in \mathcal P_r$ can be connected to a matrix polynomial $q \in \mathcal P_r$ which is monic, {\em i.e.} $q_{d} = \I_m$ and which has simple determinant roots. 
  \end{lma}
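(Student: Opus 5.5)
The plan is to build the path from $p$ to $q$ in three short pieces. Each piece is a small perturbation or a multiplication by an invertible constant, so $\det p(z)$ never vanishes on $S^1$ along the way. Membership in $\mathcal P_r$ along the whole path is then automatic. Indeed, $r(p)=\textup{wind}_{S^1}(\det a(z))+(n-1)m$ by the argument principle, and the winding number is locally constant on the space of matrix polynomials of degree $\leq d$ that are invertible on $S^1$. So it suffices to stay inside that space.

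\textbf{Step 1: make the leading coefficient invertible.} By compactness of $S^1$ there is $C>0$ with $\|p(z)^{-1}\|\leq C$ for all $z\in S^1$. For $|\epsilon|<1/C$, the argument of Lemma \ref{lma:FR-aux1}(1) shows that $p(z)+t\epsilon z^d\I_m$ is invertible for all $z\in S^1$ and $t\in[0,1]$, since $|z^d|=1$. The map $\epsilon\mapsto\det(p_d+\epsilon\I_m)$ is a monic polynomial of degree $m$, so it has only finitely many zeros. We can therefore choose $0<\epsilon<1/C$ with $p_d+\epsilon\I_m$ invertible. The linear path $t\mapsto p+t\epsilon z^d\I_m$ then joins $p$ to a polynomial $p'$ of degree $\leq d$ with invertible leading coefficient $p'_d$.

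\textbf{Step 2: make it monic.} Since $GL_m(\C)$ is path-connected, choose a path $g_t$ in $GL_m(\C)$ from $\I_m$ to $(p'_d)^{-1}$. The path $t\mapsto g_t\,p'(z)$ remains invertible on $S^1$ and ends at a monic polynomial $p''$. Its determinant $\det p''(z)$ is then a monic scalar polynomial of degree exactly $md$.

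\textbf{Step 3: make the determinant roots simple.} This is the step I expect to be the main obstacle. Consider the affine space $\mathcal M$ of monic matrix polynomials of degree $d$, with coordinates $(q_0,\ldots,q_{d-1})$. The discriminant $D(q):=\textup{disc}(\det q(z))$ is a polynomial function on $\mathcal M$, because $\det q$ is monic of fixed degree $md$ with coefficients polynomial in the $q_j$. The function $D$ is not identically zero: take $q(z)=\textup{diag}(f_1(z),\ldots,f_m(z))$ with the $f_i$ monic of degree $d$ and all $md$ roots pairwise distinct; then $D(q)\neq 0$.

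Now pick any direction $h\in\mathcal M-p''$ such that $s\mapsto D(p''+sh)$ is a nonzero polynomial in $s$. For instance, take $h=q-p''$ with $q$ the diagonal example above, so that this polynomial is nonzero at $s=1$. This polynomial has finitely many zeros, so for arbitrarily small $\delta>0$ we have $D(p''+\delta h)\neq 0$. Choose $\delta$ small enough that $\|\delta h(z)\|<1/\sup_{S^1}\|p''(z)^{-1}\|$ for all $z\in S^1$. Then the segment $s\mapsto p''+sh$, $s\in[0,\delta]$, stays invertible on $S^1$ and consists of monic polynomials. Its endpoint $q:=p''+\delta h$ is monic with simple determinant roots.

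Concatenating the three paths joins $p$ to $q$ inside $\mathcal P_r$, as required.
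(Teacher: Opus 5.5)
Your proof is correct and takes essentially the paper's route. You perturb $p$ slightly so that $\det p(z)$ stays nonzero on $S^1$, which preserves $r$. You then use that a nonzero polynomial in the coefficients cannot vanish identically nearby, which gives an invertible leading coefficient and simple determinant roots. Finally you multiply by a path in $GL_m(\C)$ to make the polynomial monic.

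The differences are only organizational. The paper obtains both genericity conditions at once from the single polynomial $(\det q_d)\cdot \mathrm{Disc}_z(\det q(z))$ on the neighbourhood $U_\epsilon$, and normalizes afterwards. You instead fix the leading coefficient first, normalize, and then handle the discriminant inside the affine space of monic polynomials. There you also check explicitly, via the diagonal example, that the discriminant is not identically zero, which the paper only asserts.
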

\proof
Let $p \in \mathcal P_r$ and choose $\epsilon < \min \sigma(p(z))$ for all $z \in S^1$ (smaller than all singular values of $p(z)$ on $S^1$). Consider the open neighborhood $U_\epsilon$ defined by $\sum_j \| p_j -q_j \| < \epsilon$. Then the whole path $p_t = (1-t) p + t q$ lies in $\mathcal P_r$ since
$$
\min \sigma( p_t(z)) \geq \epsilon - t \sum_j \| p_j -q_j \| >0.
$$
Hence $\det p_t(z) \neq 0$ for all $z \in S^1$ and for all $t \in [0,1]$, so that its winding number is well-defined. But then the latter is constant along the path $p_t$, so that $r(p_t) = r$. We have thus shown that $U_\epsilon$ is an open neighborhood of $p$ which is contained in $\mathcal P_r$.

The conditions $\det q_d \neq 0$ and $\text{Disc}_z(\det q(z)) \neq 0$ (for the discriminant of $\det q(z)$) specify matrices with invertible top coefficient matrix and with simple determinant roots. Since $(\det q_d )\cdot \text{Disc}_z(\det q(z))$ is a non-zero polynomial in the matrix coefficients of $q_0,\ldots, q_d$, it cannot vanish on the open subset $U_\epsilon$. Hence, there exists $q \in U_\epsilon  \subseteq \mathcal P_r$ with invertible top coefficient matrix and with simple determinant roots. Taking a path $g_t$ in $GL_m(\C)$ from $\I_m$ to $q_{d}^{-1}$ we find that $g_t q(z)$ is a continuous path staying within $\mathcal P_r$ from $q$ to a monic polynomial and with simple determinant roots. 
\endproof

Instead of $\mathcal P_r$ we are thus led to consider the space $\M_r$ of monic polynomials of degree $d$, invertible on $S^1$ and with simple determinant roots. We will parametrize these by the following space
$$
X_r := \left\{ ( \lambda_1, \ldots, \lambda_{md} ; v_1, \ldots, v_{md} ) \in \D^r \times (\C \setminus \overline \D)^{md-r} \times (\C^m)^{md} \mid \det V \neq 0, \lambda_i \neq \lambda_j \right\}
$$
where the matrix $V\in M_d(\C)$ is defined by
$$
V = \begin{pmatrix} v_1 &\cdots &v_{md} \\
  \lambda_1 v_1 & \cdots & \lambda_{md} v_{md}\\
  \vdots & & \vdots \\
  \lambda_1^{d-1} v_1 & \cdots & \lambda_{md}^{d-1} v_{md}\\ 
  \end{pmatrix}.
$$
\begin{prop}
  \label{prop:FR-K1}
  \begin{enumerate}
  \item The following defines a continuous map $X_r \to \M_r$,
    $$
    ( \lambda_1, \ldots, \lambda_{md} ; v_1, \ldots, v_{md} ) \mapsto p(z) = z^d \I_m + \sum_{j=0}^{d-1} p_j z^j
    $$
    where $p_0, \ldots, p_{d-1}$ are determined by
    \begin{equation}
      \label{eq:p-j}
    \begin{pmatrix} p_0 &\cdots & p_{d-1} \end{pmatrix}
    = - \begin{pmatrix} \lambda_1^d v_1 & \cdots \lambda_{md}^d v_{md} \end{pmatrix} \cdot V^{-1}. 
    \end{equation}
    \item This map $X_r \to \M_r$ is surjective. 
\item The space $X_r$ is path-connected. 
  \end{enumerate}
 \end{prop}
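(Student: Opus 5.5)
Note first that $V$ is really an $md\times md$ matrix: each block row $\lambda_i^k v_i$ lies in $\C^m$, and there are $d$ block rows. For part (1), invertibility of $V$ means \eqref{eq:p-j} defines $p_0,\ldots,p_{d-1}$ uniquely, and they depend continuously (indeed rationally) on the data by Cramer's rule. Unwinding \eqref{eq:p-j} column by column gives $p(\lambda_i)v_i = \lambda_i^d v_i + \sum_j p_j\lambda_i^j v_i = 0$, so each $\lambda_i$ is a root of $\det p$. Since $p$ is monic of degree $d$, $\det p$ is a monic scalar polynomial of degree $md$. It therefore has exactly the $md$ distinct roots $\lambda_i$, all simple. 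None lies on $S^1$, and exactly $r$ lie in $\D$. Hence $p\in\M_r$.

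For part (2), I start from $p\in\M_r$ and use the linearization of monic matrix polynomials. Let $C$ be the $md\times md$ block companion matrix of $p$, so that $\det(z-C)=\det p(z)$. Since $\det p$ has $md$ simple roots, $C$ is diagonalizable with eigenvalues $\lambda_1,\ldots,\lambda_{md}$. These can be ordered so that the first $r$ lie in $\D$ and the rest in $\C\setminus\overline\D$. Each $\lambda_i$ is simple, so $\ker p(\lambda_i)$ is one-dimensional; choose a nonzero $v_i$ in it. The corresponding eigenvector of the companion matrix (in the appropriate convention) is $(v_i,\lambda_i v_i,\ldots,\lambda_i^{d-1}v_i)^T$, which is exactly the $i$-th column of $V$. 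Eigenvectors for distinct eigenvalues are independent, so $\det V\neq 0$. Then $p$ satisfies the linear system \eqref{eq:p-j}, and uniqueness of its solution shows that $p$ is the image of this point.

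For part (3), the space $X_r$ fibres over the configuration space $\{\lambda\}$ of distinct points, with $r$ of them in $\D$ and $md-r$ in $\C\setminus\overline\D$. This base is a product of ordered configuration spaces of connected open subsets of $\C$, hence path-connected, because removing the real-codimension-2 diagonals leaves it connected. Over a fixed $\lambda$, the fibre is the set of $(v_i)\in(\C^m)^{md}$ with $\det V\neq0$. This is the complement of the zero set of a polynomial in the $v_i$. The polynomial is not identically zero: by part (2) applied to any monic polynomial with these prescribed simple roots, for instance a diagonal one with the roots distributed among the diagonal entries, each entry of degree $d$, there is a point of the fibre. So each fibre is path-connected, by the same fact \cite[Proposition 2.2.3]{Chi89} used before. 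To connect two points of $X_r$, I use the same covering argument as in the proof of Proposition \ref{prop:path-conn-C-block}. Take a path $\lambda(t)$ in the base, and at each $t_0$ choose a fibre point. Nonvanishing of $\det V$ is an open condition, so that choice works for nearby $t$. By compactness there are finitely many intervals, and at each overlap the two choices are joined inside the connected fibre.

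The step I expect to be the main obstacle is nonemptiness of every fibre, i.e. realizing \emph{any} prescribed set of $md$ distinct roots with the required split, so that $\det V\not\equiv0$ for every $\lambda$. The diagonal construction handles it once the roots are partitioned into $m$ groups of size $d$. A secondary point is checking the companion-matrix eigenvector convention against the column structure of $V$.
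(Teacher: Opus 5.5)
Parts (1) and (2) of your argument follow the paper. In (1) you multiply \eqref{eq:p-j} by $V$ to get $p(\lambda_i)v_i=0$. In (2) you use that the vectors $(v_i,\lambda_i v_i,\ldots,\lambda_i^{d-1}v_i)^T$ are eigenvectors of the block companion matrix for distinct eigenvalues. You are also right that $V$ is $md\times md$, and your explicit appeal to uniqueness of the solution of \eqref{eq:p-j} is a step the paper leaves implicit.

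For part (3) you take a different, though correct, route. The paper does not fibre over root configurations. It observes that $X_r$ is the complement, in the connected complex manifold $\D^r\times(\C\setminus\overline\D)^{md-r}\times(\C^m)^{md}$, of the zero set of the single polynomial $(\det V)\prod_{i<j}(\lambda_j-\lambda_i)$. It then applies once the fact that complements of proper analytic subsets of connected complex manifolds are path-connected. This needs only that the polynomial not vanish identically on that domain, i.e.\ that $X_r$ contain a single point. Your diagonal example combined with part (2) supplies such a point; the paper leaves this implicit.

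Your approach applies the same fact separately to the base and to each fibre, then glues with the covering argument from the proof of Proposition~\ref{prop:path-conn-C-block}. This is valid, but it forces you to prove non-emptiness of \emph{every} fibre. So the step you flagged as the main obstacle comes from the fibrewise approach and does not arise in the direct one. In exchange, your route gives slightly more: the projection of $X_r$ onto the configuration space of roots is surjective. In other words, every admissible configuration of $md$ distinct roots is realized by some element of $\M_r$, which the paper never needs.
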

\proof
(1) We may multiply \eqref{eq:p-j} by $V$ on the right-hand side to obtain
$$
 \begin{pmatrix} p_0 &\cdots & p_{d-1} \end{pmatrix} \cdot \begin{pmatrix} v_1 &\cdots &v_{md} \\
  \lambda_1 v_1 & \cdots & \lambda_{md} v_{md}\\
  \vdots & & \vdots \\
  \lambda_1^{d-1} v_1 & \cdots & \lambda_{md}^{d-1} v_{md}\\ 
  \end{pmatrix} +  \begin{pmatrix} \lambda_1^d v_1 & \cdots \lambda_{md}^d v_{md} \end{pmatrix}  = 0 .
 $$
 Equivalently, we have
 $$
\begin{pmatrix} p(\lambda_1) v_1 & \cdots &  p(\lambda_{md}) v_{md} \end{pmatrix}  =0 .
$$
This means that each $\lambda_i$ is a root of $\det p(z)$, so that
$$
\det p(z) = \prod_{i=1}^{md} (z- \lambda_i) .
$$
Hence, $p \in \M_r$ as desired.

(2) Take any $p \in \M_r$ and label roots by $\lambda_1, \ldots, \lambda_r \in \D$, $\lambda_{r+1} ,\ldots, \lambda_{md} \in \C \setminus \overline \D$ and choose $v_i \in \ker P(\lambda_i)$. Then the vectors $w_i := \begin{pmatrix} v_i & \lambda_i v_i &\cdots & \lambda_i^{d-1} v_i \end{pmatrix}^t$  are eigenvectors of the companion matrix,
$$
C = \begin{pmatrix} 0 & 1 & 0 & \cdots & \\
 0  & 0 & 1 & \\
 \vdots & & \ddots &\ddots &\\
 0 & \cdots &0& 0 &1\\
   -p_0 & -p_1 & -p_2 & \cdots & p_{d-1}\end{pmatrix}
$$
with eigenvalues $\lambda_i$, respectively, as one can readily check. As these are all distint, the vectors $w_i$ are linearly independent. The matrix $V$ defined as above thus has $\det V \neq 0$.

(3) The space $X_r$ is defined as the complement in $\D^r \times (\C \setminus \overline \D)^{md-r} \times \C^{md}$ of the zero locus of the polynomial
$$
(\det V) \cdot  \prod_{i<j} ( \lambda_j -\lambda_i).
$$
It thus follows from {\em e.g.} \cite[Proposition 2.2.3]{Chi89} that $X_r$ is path-connected. 
\endproof

\proof[Proof of Theorem \ref{thm:FR-K1}]
By Lemma \ref{lma:FR-K1} it follows that any $p \in \mathcal P_r$ can be connected to a matrix polynomial in $\M_r \subseteq \mathcal P_r$. But by Proposition \ref{prop:FR-K1} the latter is a continuous image of the path-connected set $X_r$, so it is itself path-connected. This means that $\mathcal P_r$ is path-connected.

Finally, all integers from $-m(n-1)$ to $m(n-1)$ occur as determinant winding numbers, since
$$
a(z) = \textup{diag} (z^{a_1}, \ldots, z^{a_m} ) \in G(\FR{n},m); \qquad -n+1 \leq a_j \leq n-1
$$
has winding number $\sum a_j$.
\endproof

  \section{$K$-invariants of graph operator systems}
  \label{sect:tol}
Let $X$ be a finite set and let $\mathcal{R}\subseteq X\times X$ be a reflexive and symmetric relation. Such a relation is called a \emph{tolerance relation}. A finite set with a tolerance relation $\mathcal{R}$ can be interpreted as a graph with vertex set $X$ and edge set $\cR$. We will loosely interchange $\cR$ with this graph. A tolerance relation $\mathcal{R}$ on $X$ gives rise to a finite-dimensional operator system $E(\mathcal{R})\subseteq M_n(\C)$ as follows. We may assume without loss of generality that $X=\{1,\dots,n\}$ and then we define
\begin{equation}
    E(\mathcal{R}) = \{(x_{ij})\in M_n(\C)\mid x_{ij}=0\text{ if }(i,j)\notin \mathcal{R}\}.
\end{equation}

These operator systems are the operator systems associated to tolerance relations on finite sets that are considered in \cite{OP15,CS21}. The $C^*$-envelopes and propagation numbers of these operator systems are known. Indeed, a slight generalization of Proposition 3.7 in \cite{CS21} states that
\begin{equation}
  \label{eq:Cstar-ER}
    C_\env^*(E(\R)) = \bigoplus_{K\in\mathcal{K}} M_{|K|}(\C),
\end{equation}
where $\mathcal{K}$ is the set of equivalence classes in $X$ of the equivalence relation generated by $\mathcal{R}$ so that $|\mathcal {K}| = c$, the number of connected components of the corresponding graph. This is also in agreement with the description of the $C^*$-envelope of a graph operator system obtained in \cite[Theorem 3.2]{OP15}.

\subsection{Path components of hermitian forms in graph operator systems}

\begin{thm}
  \label{thm:tol}
  \begin{enumerate}
    \item 
      Let $\cR$ be connected. The space of non-singular hermitian matrices in $M_m( E(\cR))$
with signature $(p,q)$, denoted by $H(E(\cR),m)_{p,q}$, is path-connected.
\item The space $H(E(\cR),m)$ has $\prod_{K \in \cK} (|K| m+1)$ path-connected components, which are labeled by the tuple of matrix signatures in each summand. Consequently,
  $$
  \V_0(E(\cR),m) \cong \prod_{K \in \cK} \{-|K| m, -|K| m+2, \ldots, |K| m\}.
  $$
      \end{enumerate}
\end{thm}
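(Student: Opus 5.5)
Part (2) reduces to part (1). If $\cR$ has connected components $K\in\cK$, then after permuting $X$ the operator system $E(\cR)$ is block diagonal, $E(\cR)=\bigoplus_{K} E(\cR_K)$, and the same holds for its $C^*$-envelope \eqref{eq:Cstar-ER}. Proposition \ref{prop:K0-properties}(2) therefore gives $\V_0(E(\cR),m)\cong\prod_K \V_0(E(\cR_K),m)$. Signature is a homotopy invariant and every signature occurs: use diagonal matrices with entries $\pm1$, which lie in $E(\cR)$ because $\cR$ is reflexive. So everything hinges on (1) for a connected graph.

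For (1), fix a connected $\cR$ on $n$ vertices. An element $x\in M_m(E(\cR))$ is identified with an $nm\times nm$ hermitian matrix with $m\times m$ blocks $x_{ij}$, where $x_{ij}=0$ for $(i,j)\notin\cR$. Since $C^*_\env=M_n(\C)$ when $\cR$ is connected, non-singular means invertible. The plan is to connect any $x\in H(E(\cR),m)_{p,q}$ to a block-diagonal hermitian invertible matrix of the same signature. Block-diagonal matrices lie in $E(\cR)$ by reflexivity, and the block-diagonal invertible hermitian matrices of signature $(p,q)$ form a connected set. To see this, diagonalize each block unitarily within $U(m)^n$, reduce to signature matrices, and permute the $\pm1$'s between vertices. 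That permutation step is where connectedness enters: a swap of entries along an edge $(i,j)$ can be done inside the $2\times2$ block $\begin{pmatrix}\cos\theta & \sin\theta\\ \sin\theta & -\cos\theta\end{pmatrix}$-type rotation, which only uses entries indexed by $(i,i),(i,j),(j,i),(j,j)$. Composing swaps along paths in the graph moves a negative entry to any vertex.

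The main step is showing that every $x$ can be deformed to block diagonal form. I would first try a dimension and codimension argument, in the spirit of Section \ref{sect:Toep}. The set of hermitian matrices in $M_m(E(\cR))$ is a real vector space $L$, and the singular ones form the zero set of the real polynomial $\det$. I want to show that each component of $L\setminus\{\det=0\}$ meets the block-diagonal subspace. The difficulty is that $\{\det=0\}$ has real codimension $1$, so generic-position arguments alone do not suffice.

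The alternative I would then pursue is induction via Schur complements, removing a leaf. Choose a vertex $v$ that is a leaf of a spanning tree, so $\cR\setminus\{v\}$ is still connected. Perturb $x$ within its component, for instance by $x+\epsilon\,\mathrm{diag}$ as in Lemma \ref{lma:FR-aux1}, so that the diagonal block $x_{vv}$ is invertible. Write $x$ with respect to $\{v\}$ and the rest as $\begin{pmatrix} x_{vv} & b^*\\ b & y\end{pmatrix}$. The path $b\mapsto tb$ then turns $x$ into $x_{vv}\oplus(y-t\,b x_{vv}^{-1}b^*)$-type Schur-congruent forms. The catch is that $b x_{vv}^{-1} b^*$ has support on the neighbours of $v$, and these need not be pairwise adjacent, so the Schur complement may leave $E(\cR\setminus v)$. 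I expect this to be the real obstacle. A fix is to choose $v$ so that its neighbourhood is a clique, e.g.\ by first enlarging $\cR$ via chordal completion, but that changes the space. Instead I would use the homotopy $x_t=\begin{pmatrix} x_{vv}&tb^*\\ tb& y\end{pmatrix}$ directly and control invertibility through $\det x_t=\det x_{vv}\det(y-t^2 bx_{vv}^{-1}b^*)$. When this vanishes for some $t$, I would first move $x_{vv}$ by scaling it up: $x_{vv}\mapsto s\,x_{vv}$ with $s\to\infty$ along a path that keeps $x$ invertible and the correction $b(sx_{vv})^{-1}b^*$ small. After that, $y$ (invertible, by a prior perturbation) stays invertible throughout, $b$ can be switched off, and induction applies to $y\in H(E(\cR\setminus v),m)$.
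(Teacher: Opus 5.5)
Your reduction of (2) to (1) is fine and matches the paper. So is your argument that block-diagonal invertible hermitian matrices of a fixed signature form a connected set: unitary diagonalization in each vertex block, plus the $\cos\theta/\sin\theta$ swaps along edges, as in Proposition \ref{prop:tol}(2).

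\textbf{The gap.} The main step, deforming $x$ to block-diagonal form, does not go through as proposed, because the scaling fix cannot work. Suppose $x_{vv}\mapsto s\,x_{vv}$ keeps $x$ invertible up to some large $s$, with $y$ invertible. Then $x$ is congruent to $s\,x_{vv}\oplus\bigl(y-s^{-1}bx_{vv}^{-1}b^*\bigr)$, so its signature is $\mathrm{sig}(x_{vv})+\mathrm{sig}(y)$. But inertia additivity gives $\mathrm{sig}(x)=\mathrm{sig}(y)+\mathrm{sig}(x_{vv}-b^*y^{-1}b)$, and $x_{vv}-b^*y^{-1}b$ need not have the signature of $x_{vv}$.

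A concrete failure: take two adjacent vertices, $m=1$, and $x=\begin{pmatrix}1&2\\2&1\end{pmatrix}$, which has signature $(1,1)$. Both diagonal blocks are invertible, and your homotopy $b\mapsto tb$ degenerates at $t=1/2$. The scaled matrix $\begin{pmatrix}s&2\\2&1\end{pmatrix}$ is singular at $s=4$ and positive definite for $s>4$. The same obstruction kills any homotopy that keeps both diagonal blocks fixed while switching off $b$: its endpoint $x_{vv}\oplus y$ generally has the wrong signature. So some diagonal block must be replaced, not merely rescaled.

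\textbf{The missing idea.} Take the Schur complement on the other side, relative to the large block $y$, so that the correction lands in the $(v,v)$ block. That block is always admissible because $\cR$ is reflexive.

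First make $y$ invertible by adding $\epsilon$ times the projection onto the complement of vertex $v$, as in Lemma \ref{lma:tol-aux1}; $x_{vv}$ itself need not be invertible. Set $\sigma:=x_{vv}-b^*y^{-1}b$, which is an invertible $m\times m$ matrix. Then
\[
x_t=\begin{pmatrix}\sigma+t^2\,b^*y^{-1}b & t\,b^*\\ t\,b & y\end{pmatrix},\qquad t\in[0,1],
\]
has constant Schur complement $\sigma$ relative to $y$. Hence $\det x_t=\det y\,\det\sigma\neq0$, and $x_t$ only alters the $(v,v)$ block and scales $b$. It therefore stays in $M_m(E(\cR))$ and connects $x$ to $\sigma\oplus y$ with $y\in H(E(\cR\setminus v),m)$, with no fill-in problem at all. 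This is Lemma \ref{lma:tol-aux2}, which the paper applies in the scalar case after the reduction $M_m(E(\cR))\cong E(\tilde\cR)$.

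Note also that the induction only needs to reach some block-diagonal signature matrix. So your choice of $v$ as a spanning-tree leaf, to keep $\cR\setminus v$ connected, is unnecessary: connectedness enters only in the final permutation step.
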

Thus, the inclusion map $\imath_\env^{E(\cR)} : E(\cR) \to C^*_\env(E(\cR)) \cong \oplus_{K \in \cK} M_{|K|}(\C)$ induces isomorphisms $\V_0(E(\cR),m) \cong \V_0( \oplus_{K \in \cK} M_{|K|}(\C) , m)$ for any $m \geq 1$, intertwining the connecting maps $\imath_{m m'}$ from Equation \eqref{eq:dir-syst} for $E= E(\cR)$ with those of $E = \oplus_{K \in \cK} M_{|K|}(\C)$. Combining this with Proposition \ref{prop:K0-properties}(3) it follows that  $K_0(E(\cR)) \cong K_0 (\oplus_{K \in \cK} M_{|K|}(\C)) \cong \Z^{|\cK|}$. 

The proof relies on the Schur complement of a hermitian form, allowing for an inductive argument on the number of elements in $X$. We start with some preparatory remarks that simplify the proof. 

First of all, the semisimple structure of the $C^*$-envelope is inherited by the operator system, in that $E(\cR)$ decomposes into operator subsystems corresponding to each summand. As these summands corresponds to the connected components of the graph, we may thus reduce the proof to the case that $\cR$ is a connected graph, so that $|\cK| = 1$. Moreover, to $\cR$ we may induce a tolerance relation $\tilde \cR$ on the set $\{1,\ldots, n\}  \times \{ 1,\ldots, m\}$ defined by
\begin{equation}
  \label{eq:rel-tildeR}
\tilde \cR := \{ ((i,\alpha), (j,\beta)  ) \mid (i,j) \in \cR \}.
\end{equation}
In other words, $\tilde \cR$ is the edge set of the Kronecker product of the graph $(X,\cR)$ and the full graph on $m$ vertices. 
Clearly, if $\cR$ is connected, then so is $\tilde \cR$. Moreover, since
$$
M_m(E(\cR)) \cong E(\tilde \cR)
$$
this reduces our study of $H(E(\cR),m)$ for connected $\cR$ to the scalar case, $m=1$, {\em i.e.} invertible hermitian elements in the operator system itself. Let us introduce the short-hand notation $E(\cR)_h^\times:= H(E(\cR),1)$ for this space. 

The space $\C^n$ on which $E(\cR)$ acts has a canonical basis $(e_i)_{i \in X}$; in the following we will also make use of (orthogonal complements with respect to) the standard inner product. 
\begin{lma}
  \label{lma:tol-aux1}
  Let $a \in E(\cR)_h^\times$ and let  $i \in X$. Consider $W = e_i^\perp$ with corresponding orthogonal projection $P_W$. Then there exists $a' \in E(\cR)_h^\times$ such that $a'$ is homotopic in $E(\cR)_h^\times$ to $a$ and the compression $P_W a' P_W$ is invertible on $W$.
  \end{lma}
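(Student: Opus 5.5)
The plan is a small perturbation inside the affine space of hermitian elements of $E(\cR)$. The invertible hermitian elements $E(\cR)_h^\times$ form an open subset of the real vector space $E(\cR)_h$ of hermitian matrices with the prescribed zero pattern. Openness follows from continuity of the determinant. Consequently, for $a \in E(\cR)_h^\times$ there is $\epsilon>0$ such that every hermitian $b \in E(\cR)$ with $\|b-a\|<\epsilon$ satisfies the following: the straight line segment $t\mapsto (1-t)a+tb$ stays in $E(\cR)_h^\times$. Indeed, $\|((1-t)a+tb)-a\| \le \|b-a\|<\epsilon \le \|a^{-1}\|^{-1}$, which forces invertibility. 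So it suffices to find such a $b$ whose compression $P_W b P_W$ is invertible on $W=e_i^\perp$.

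To do so, I consider the real polynomial function $f(b) = \det(P_W b P_W|_W)$ on $E(\cR)_h$. This is the principal minor of $b$ obtained by deleting row and column $i$. I then show that $f$ is not identically zero on $E(\cR)_h$. This is immediate: every diagonal entry is allowed in $E(\cR)$, by reflexivity of $\cR$, so the identity matrix $\I_n$ lies in $E(\cR)_h$. Its compression is the identity on $W$, so $f(\I_n)=1$.

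A nonzero real polynomial on a finite-dimensional real vector space cannot vanish on a nonempty open set. Its zero set is closed with empty interior, of measure zero. Hence the open ball of radius $\epsilon$ around $a$ in $E(\cR)_h$ contains some $b=a'$ with $f(a')\neq 0$. The segment from $a$ to $a'$ is then a homotopy in $E(\cR)_h^\times$, and $P_W a' P_W$ is invertible on $W$, as required.

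The only point needing care is the genericity step. One must note that the relevant polynomial is defined on the real space of hermitian matrices with the zero pattern, not on all of $M_n(\C)$, so its non-vanishing has to be witnessed inside that space. This is exactly why I choose the witness $\I_n$, which uses reflexivity of $\cR$. Everything else is routine. If the linear path is not preferred, one could alternatively take $a' = a + s\,d$ for a small real $s$ with $d$ a suitable diagonal matrix. Then $f(a+sd)$ is a polynomial in $s$ whose leading coefficient is nonzero for $d$ supported off $i$, so it has only finitely many zeros. This gives the same conclusion.
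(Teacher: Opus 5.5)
Your proof is correct, but your main argument takes a different route from the paper's.

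The paper is explicit. It sets $a' = a + \epsilon P_W$ with $0<\epsilon<\min\{\delta,\|a^{-1}\|^{-1}\}$, where $\delta$ is the smallest nonzero singular value of the hermitian compression $A=(P_W a P_W)_{|W}$. Then $A+\epsilon 1_W$ is invertible because no eigenvalue of $A$ can equal $-\epsilon$, and $P_W\in E(\cR)$ by reflexivity.

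You argue by genericity instead. The principal minor $f$ is a real polynomial on $E(\cR)_h$ with $f(\I_n)=1$, so its zero set has empty interior. Hence the ball of radius $\|a^{-1}\|^{-1}$ around $a$ contains a suitable $a'$.

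Both proofs rest on the same two ingredients: the Neumann-series bound that keeps the straight segment invertible, and reflexivity of $\cR$, which puts diagonal matrices in $E(\cR)$. Your closing alternative is essentially the paper's proof. With $d=P_W$, $f(a+sP_W)=\det(A+s1_W)$ is monic of degree $n-1$ in $s$, so only finitely many $s$ are bad. One small correction there: for a general diagonal $d$ you need every diagonal entry $d_{jj}$ with $j\neq i$ to be nonzero. Merely $d_{ii}=0$ is not enough, since $d=e_je_j^*$ can make $f(a+sd)$ vanish identically.

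What each approach buys:
\begin{itemize}
\item The paper's choice is constructive and fixes the direction and size of the perturbation.
\item Yours avoids all spectral bookkeeping and is more flexible. A finite product of nonzero polynomials is nonzero, so one perturbation can make all the nested principal minors used in the induction of Proposition \ref{prop:tol} nonzero at once. This removes the need to re-perturb at each inductive step.
\end{itemize}
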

\proof
Let $A = (P_W a P_W )_{|W}$ and let $\delta$ be its gap, {\em i.e.} its smallest non-zero singular value. Choose $0< \epsilon < \text{min} \{ \delta, \|a^{-1} \|^{-1} \}$. For any $t \in [0,1]$ write $a_t = a+ t \epsilon P_W$; this is a continuous path of self-adjoint elements in $E(\cR)$. Moreover, since
$$
\| a_t -a \| \leq \epsilon < \| a^{-1} \|^{-1}
$$
it follows that $a_t$ is invertible for all $t$. Finally, if we write $a' = a_1$ then $A' = (P_W a' P_W)_{|W} = A + \epsilon 1_W$ is invertible since $\epsilon < \delta$.  
\endproof

\begin{lma}
  \label{lma:tol-aux2}
  Let $a \in E(\cR)_h^\times$ and let $i \in X$. With respect to the decomposition $\C^{n} = e_i^\perp \oplus \C \cdot e_i$  write
  $$
a = \begin{pmatrix} A & v \\ v^* & d \end{pmatrix}; \qquad ( A \in M_{n-1}(\C), v \in \C^{n-1}, d \in \R).
$$
Assuming that $A$ is invertible, write $s= d- v^* A^{-1} v$ for the $1 \times 1$ Schur complement of $A$. Then $s$ is non-zero and $a$ is homotopic in $E(\cR)_h^\times$ to $A \oplus s$ 
  \end{lma}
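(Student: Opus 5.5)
We use the block $LDL^*$ factorization that underlies the Schur complement. With respect to $\C^n = e_i^\perp \oplus \C e_i$ we have
\begin{equation*}
a = \begin{pmatrix} \I & 0 \\ v^*A^{-1} & 1 \end{pmatrix} \begin{pmatrix} A & 0 \\ 0 & s \end{pmatrix} \begin{pmatrix} \I & A^{-1}v \\ 0 & 1 \end{pmatrix} = L^* (A\oplus s) L, \qquad L = \begin{pmatrix} \I & A^{-1}v \\ 0 & 1 \end{pmatrix}.
\end{equation*}
Since $\det a = \det A \cdot s$ and $a$, $A$ are invertible, $s \neq 0$. This settles the first claim.

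For the homotopy we take
\begin{equation*}
a_t := \begin{pmatrix} A & tv \\ tv^* & s + t^2 v^*A^{-1}v \end{pmatrix}, \qquad t \in [0,1],
\end{equation*}
so that $a_1 = a$ (because $s + v^*A^{-1}v = d$) and $a_0 = A \oplus s$. Equivalently, $a_t = L_t^*(A\oplus s)L_t$ with $L_t$ obtained from $L$ by replacing $A^{-1}v$ by $tA^{-1}v$. Each $a_t$ is self-adjoint, and $\det a_t = \det A \cdot s \neq 0$, so $a_t$ is invertible; in fact its Schur complement is constantly $s$.

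The remaining point, and the only place where the operator system structure enters, is that $a_t \in E(\cR)$ for every $t$. The diagonal entries, in particular the $(i,i)$-entry, are always allowed because $\cR$ is reflexive. The block $A$ is unchanged along the path, and the off-diagonal column $tv$ has the same support as $v$, which is allowed since $a \in E(\cR)$. The $(i,i)$-entry $s+t^2v^*A^{-1}v$ is a real scalar at a diagonal position. Hence $a_t \in E(\cR)_h^\times$ for all $t$, and we obtain a path from $a$ to $A\oplus s$. Finally, $A \oplus s$ itself lies in $E(\cR)$, since it is obtained from $a$ by removing the off-diagonal entries in row and column $i$ and changing a diagonal entry.

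There is no serious obstacle here. The one thing to check carefully is that the Schur-complement path keeps the sparsity pattern. The naive path $L_t^*(A\oplus s)L_t$ written without expanding might seem to create fill-in, but the explicit formula above shows that only the $(i,i)$-entry changes, besides the scaling of $v$.
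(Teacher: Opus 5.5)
Your proof is correct and follows the paper's argument: the same $LDL^*$ factorization gives $\det a = \det A\cdot s$, and the same path $a_t$, whose Schur complement stays equal to $s$, joins $a$ to $A\oplus s$. You also check explicitly that $a_t$ stays in $E(\cR)$, since only the $(i,i)$-entry changes and $tv$ has the support of $v$; the paper leaves this implicit, so it is a useful addition.
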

\proof
Since
$$
a = \begin{pmatrix} A & v \\ v^* & d \end{pmatrix} =\begin{pmatrix} 1 & 0 \\ v^* A^{-1}  & 1 \end{pmatrix}\begin{pmatrix} A &  0 \\ 0 & s \end{pmatrix} \begin{pmatrix} 1 & A^{-1} v \\ 0 & 1 \end{pmatrix},
$$
the determinant of the Schur complement satisfies $\det a = \det A \cdot s$, whence $s \neq 0$. Moreover, the path 
$$
a_t = \begin{pmatrix} A & t v \\ t v^* & s+ t^2 v^* A^{-1} v \end{pmatrix} ; \qquad (t \in [0,1])
$$
has $\det a_t = \det A \cdot s \neq 0$ since for all $t$ the Schur complement of $a_t$ is readily found to be $s$. We conclude that $a_t \in E(\cR)_h^\times$ while $a_1 = a$ and $a_0 = A \oplus s$.
\endproof

\begin{prop}
  \label{prop:tol}
  \begin{enumerate}
    \item 
  Let $a \in E(\cR)_h^\times$ have signature $(p,q)$. Then there is a continuous path connecting $a$ to a diagonal matrix $\Delta$ with only $\pm 1$ on the diagonal (with signature $(p,q)$).
\item If $\cR$ is connected and $\Delta,\Delta'$ are two $n \times n$ signature matrices (only $\pm 1$'s on the diagonal) with the same signature, then $\Delta$ and $\Delta'$ can be joined by a path in $E(\cR)_h^\times$.
  \end{enumerate}
\end{prop}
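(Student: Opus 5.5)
For part (1) I will argue by induction on $n=|X|$, using Lemmas \ref{lma:tol-aux1} and \ref{lma:tol-aux2}. The key observation is that for any $i\in X$ the compression $(P_W a P_W)_{|W}$ to $W=e_i^\perp$ is again an element of the graph operator system $E(\cR_{X\setminus\{i\}})$ of the induced relation on $X\setminus\{i\}$. Moreover, any path there, extended by $\oplus s$ with $s$ a fixed nonzero scalar, stays in $E(\cR)_h^\times$. Connectedness plays no role in (1), so the induction may pass to disconnected induced subgraphs.

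Concretely, given $a$, I first apply Lemma \ref{lma:tol-aux1} with any vertex $i$ to reach $a'$ whose compression $A$ is invertible. Lemma \ref{lma:tol-aux2} then homotopes $a'$ to $A\oplus s$ with $s\in\R\setminus\{0\}$. The induction hypothesis applied to $A\in E(\cR_{X\setminus\{i\}})_h^\times$ gives a path from $A$ to a signature matrix. Scaling $s$ to $\pm1$ completes the path. Throughout, signatures are preserved by continuity, and the base case $n=1$ is trivial.

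For part (2), since any two signature matrices with equal signature differ by a permutation of their diagonal entries, it suffices to swap a $+1$ and a $-1$ sitting at adjacent vertices $i,j$ with $(i,j)\in\cR$. Transpositions along edges generate all rearrangements of the diagonal entries when the graph is connected: one moves a token along a path by successive adjacent swaps, and swaps of equal entries are trivial. For such an adjacent pair, the $2\times2$ block $\mathrm{diag}(1,-1)$ on coordinates $i,j$ lies in $E(\cR)$ restricted to $\{i,j\}$, which is all of $M_2(\C)$. The space of invertible hermitian $2\times2$ matrices of signature $(1,1)$ is connected by Example \ref{ex:V-C}. Explicitly, the path
$$
\begin{pmatrix}\cos\theta & \sin\theta\\ \sin\theta & -\cos\theta\end{pmatrix},\qquad \theta\in[0,\pi],
$$
with determinant $-1$, runs from $\mathrm{diag}(1,-1)$ to $\mathrm{diag}(-1,1)$. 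I embed this path as a block, with all other diagonal entries fixed, and it stays in $E(\cR)_h^\times$.

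The main obstacle is the combinatorial step in (2): justifying that adjacent transpositions along edges of a connected graph realize the needed permutation. The standard fact is that transpositions along the edges of a spanning tree generate the full symmetric group. I will cite or sketch this by induction, removing a leaf of a spanning tree.
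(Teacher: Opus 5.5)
Your proposal is correct and follows essentially the same route as the paper. For (1), both argue by induction on the number of vertices, using Lemmas \ref{lma:tol-aux1} and \ref{lma:tol-aux2} to reduce to the induced relation on $X\setminus\{i\}$. For (2), both realize swaps along edges by the rotation path $C(\theta)$, and your spanning-tree argument that edge transpositions generate $S_n$ fills in a fact the paper only asserts.
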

\proof
We first show that $a$ is homotopic to a diagonal matrix $\Delta$ with only $\pm 1$ on the diagonal. Induction on the number of vertices starts with $n=1$ for which $\cR= \{ (1,1) \}$. Hence $E(\cR) =  \C$ so that the result follows from Example \ref{ex:V-C}.

Let us then assume $n \geq 2$, and take $i \in X$, setting $W = e_i^\perp$ as before. By Lemma \ref{lma:tol-aux1} it follows that $a$ is homotopic to $a'$ with compression $A' =(P_W a' P_W)_{|W}$ invertible on $W$. Applying Lemma \ref{lma:tol-aux2} to $a'$ then shows that $a'$ is homotopic to $A' \oplus s$ with $s$ a non-zero real number. 

Define $\cR' = \cR_{|\{ 1,\ldots \widehat{i}, \ldots, n\}}$; it is the graph that omits the vertex $i$ and all edges connected to it. Then $A' \in E(\cR')_h^\times$ for which the induction hypothesis gives a diagonal matrix with signature equal to that of $A'$.

Since signature is homotopy invariant, it follows that the resulting matrix can be joined within $E(\cR)_h^\times$ to a matrix $\Delta$ with $p$ times $+1$ and $q$ times $-1$ on the diagonal.

(2) For any two signature matrices $\Delta, \Delta'$ with the same signature, there exists a permutation $\sigma \in S_n$ such that
$$
\Delta'_{ii} = \Delta_{\sigma(i) \sigma(i)}. 
$$
Since $\cR$ is connected, any such permutation can be written as a product of transpositions along the edges of the corresponding graph, {\em i.e.} as a product of exchanges $i \leftrightarrow j$ with $(i,j) \in \cR$. So it is sufficient to show that $\Delta, \Delta'$ can be joined by a path for the case that $\Delta'$ is obtained from $\Delta$ by interchanging the $i$'th and $j$'th diagonal entries. Let $L = \text{span}_\C \{ e_i ,e_j\}\subseteq \C^n$ so that $\Delta_{|L^\perp} = \Delta'_{|L^\perp}$. If $\Delta_{|L} = \Delta'_{|L}$ there is nothing to prove so let us assume that
\begin{align*}
  \Delta_{|L} = \begin{pmatrix} 1 & 0 \\ 0 & -1\end{pmatrix} \qquad 
    \Delta'_{|L} = \begin{pmatrix} -1 & 0 \\ 0 & 1\end{pmatrix} 
\end{align*}
(or the other way around, which amounts to exchanging $\Delta$ and $\Delta'$). We introduce 
$$
C(\theta) = \begin{pmatrix} \cos \theta & \sin \theta \\ \sin \theta & -\cos \theta \end{pmatrix} \oplus \Delta_{|L^\perp} \in E(\cR)_h^\times; \qquad (\theta \in [0,\pi]). 
$$
Upon varying $\theta$ from 0 to $\pi$ we find that $C(0) = \Delta$ while $C(\pi) = \Delta'$ thus providing the sought-for path.
\endproof

\begin{corl}
  \label{corl:tol}
  Assume that $\cR$ is connected and let $a \in E(\cR)_h^\times$ have signature $(p,q)$. Then there is a continuous path connecting $a$ to the diagonal $n \times n$ matrix $\Lambda = \I_p \oplus -\I_q$ (in the canonical basis of $\C^{n}$). 
\end{corl}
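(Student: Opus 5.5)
The corollary follows by concatenating the two parts of Proposition \ref{prop:tol}. Given $a \in E(\cR)_h^\times$ with signature $(p,q)$, I first apply Proposition \ref{prop:tol}(1). This yields a continuous path inside $E(\cR)_h^\times$ from $a$ to a diagonal signature matrix $\Delta$, whose diagonal entries are $\pm 1$. The signature is constant along paths of invertible hermitian matrices, since eigenvalues move continuously and cannot cross zero. Hence $\Delta$ has exactly $p$ entries equal to $+1$ and $q$ entries equal to $-1$, placed in some order.

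Next, note that $\Lambda = \I_p \oplus -\I_q$ is itself a diagonal signature matrix with signature $(p,q)$. It is also an element of $E(\cR)_h^\times$: the relation $\cR$ is reflexive, so every diagonal matrix lies in $E(\cR)$, and $\Lambda$ is hermitian and invertible. Because $\cR$ is connected, Proposition \ref{prop:tol}(2) gives a path in $E(\cR)_h^\times$ joining $\Delta$ to $\Lambda$. Concatenating this with the path from the first step gives the required path from $a$ to $\Lambda$.

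There is essentially no obstacle, since the work is contained in Proposition \ref{prop:tol}. The one point to check is that the path in part (1) really stays inside $E(\cR)_h^\times$ throughout the induction. In the inductive step, $A' \oplus s$ is connected to $\Delta_{A'} \oplus s$ via the induction hypothesis for $\cR'$. Any matrix in $E(\cR')$ extended by a diagonal entry lies in $E(\cR)$, because $\cR'$ is the restriction of $\cR$ and the diagonal pairs belong to $\cR$. Finally, $s$ is connected to $\operatorname{sign}(s)$ within $\R \setminus \{0\}$. With this confirmed, the concatenation argument completes the proof.
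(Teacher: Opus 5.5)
Your proposal is correct and follows the paper's route: the paper states the corollary as an immediate consequence of Proposition~\ref{prop:tol}, with part (1) deforming $a$ to a signature matrix $\Delta$ of signature $(p,q)$, and part (2), using connectedness of $\cR$, joining $\Delta$ to $\Lambda$. Your check that the inductive path in part (1) stays inside $E(\cR)_h^\times$ is sound, but it is already covered by the statement of that proposition.
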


\proof[Proof of Theorem \ref{thm:tol}]
(1) Note that $H(E(\cR), m) = H( E(\tilde \cR),1) \equiv E(\tilde \cR)_h^\times$ where $\tilde \cR$ is the connected relation defined in \eqref{eq:rel-tildeR}. We may thus apply Corollary \ref{corl:tol} to the relation $\tilde \cR$, establishing the claimed result. 

(2) According to the decomposition in \eqref{eq:Cstar-ER} the operator system $E(\cR)$ allows a direct sum decomposition:
$$
E(\cR) = \bigoplus_{K \in \cK} E(\cR_K) ; \qquad E(\cR_K) \subseteq M_{|K|}(\C),
$$
with $\cR_K$ the restriction of $\cR$ to the equivalence class $K \subseteq X$. From Proposition \ref{prop:K0-properties}(2) it then follows that $\V_0(E(\cR),m) \cong \prod_{K \in \cK} \V_0( E(\cR_K),m)$. The stated result then follows from (1) of the Theorem.
\endproof


\newcommand{\noopsort}[1]{}

\end{document}